\newtheorem{theorem}{Theorem}
\title{{\bf{A Study of Quantitative Correlations Between
Crucial Bio-markers and the Optimal Drug Regimen of Type-I
Lepra Reaction}}}
\author[1]{CH Ramanjaneyulu}
\author[2]{Dinesh Nayak}
\author[3,4,*]{D K K Vamsi}
\affil[1, 2, 3]{ \ Department of Mathematics and Computer Science, Sri Sathya Sai Institute of Higher Learning, India.}
\affil[4]{ \ Centre for Excellence in Mathematical Biology, Sri Sathya Sai Institute of Higher Learning, India.}
\affil[1]{First Author. Email: chramanjaneyulu@sssihl.edu.in}
\affil[2]{Second author. Email: dineshnayak@sssihl.edu.in}
\affil[*]{Corresponding author. Email: dkkvamsi@sssihl.edu.in}
\date{}
\begin{document}

\maketitle

\begin{abstract} {{
\noindent Leprosy (Hansen's) is a disease caused by Mycobacterium leprae. This disease slowly leads to occurrence of leprae reactions which mainly damage peripheral nervous system which cause loss of organs.
We can prevent occurring leprae reactions by monitoring the bio-markers involved in it. Motivated by these observations in this research work we do a exhaustive study dealing with the  quantitative correlations between crucial bio-markers and the  Multi Drug Thearphy (MDT) used in treating the type I lepra reaction. We frame and study a complex 11 compartment model dealing with the  the concentrations of plasma $c_1(t)$ and effective drug action $c_2(t)$, susceptible schwann cells $S(t)$, infected schwann cells $I(t)$, bacterial load $B(t)$, and five cytokines pivotal in Type-1 Lepra reaction: IFN-$\gamma$, TNF-$\alpha$, IL-$10$, IL-$12$, IL-$15$, and IL-$17$. We explore exhaustively and establish the quantitative correlations with respect to the  optimal drug dosage of the MDT drugs such as rifampin, clofazimine \& dapsone and the crucial bio-markers involved in type I lepra reaction. We conclude this work by reitrating the fact that the optimal drug dosage of the MDT drugs found through these optimal control studies and the dosage prescribed as per  WHO guidelines are almost the same. }}
\end{abstract}

{ \bf {keywords:} } Hansen’s disease, Type I lepra reaction, bio-markers, multidrug therapy, Newton’s gradient
method, optimal drug regimen \\

{ \bf {MSC 2020 codes:} } 37Nxx, 92BXX, 92Cxx  


\section{Introduction} \label{Intro}
Leprosy, one of the oldest diseases has remained a neglected tropical disease long since. It is caused primarily by slow-growing bacterium Mycobacterium leprae (M. leprae). This bacterium primarily affects Schwann cells, leading to skin damage and impacting the peripheral nervous system, as well as affecting the eyes and mucosa of the upper respiratory tract. According to the World Health Organization (WHO), over 200,000 new cases of leprosy are reported annually in approximately 120 countries \cite{web3}. In 2022, India alone recorded about 103,819 new cases \cite{web2}. Leprosy is transmitted via droplets from the nose and mouth during close and frequent contact with untreated cases. Leprosy can progress to a chronic phase known as Lepra reaction, resulting in permanent disabilities and organ loss. Early detection of the disease through monitoring key changes in biomarker levels is crucial for preventing these consequences. \\

The modeling of leprosy began in the 1970s with simple compartmental models, such as the susceptible-infectious-recovered framework \cite{lechat1974epidemetric}. Subsequently, more complex models were developed to assess the effectiveness of long-term control and elimination strategies, including mass drug administration, contact tracing, and vaccination \cite{lechat1985simulation}. Several investigations focusing on the population-level dynamics of the disease are discussed in studies \cite{blok2015mathematical, giraldo2018multibacillary}. Additionally, the work \cite{ghosh2021mathematical} deal with the cellular dynamics within the host. \\

The alterations in the chemical and metabolic properties of the cytosolic environment within host cells due to the presence of M. leprae were first elucidated by Rudolf Virchow (1821–1902) in the late nineteenth century \cite{virchow1865krankhaften}. Subsequent clinical studies have delineated the pathways of cytokine responses, leading to the identification of two main types of Lepra reactions. Type-1 Lepra reactions are associated with cellular immune responses, while Type-2 reactions are linked to humoral immune responses \cite{luo2021host, bilik2019leprosy}. Both pathways involve crucial biomarkers/cytokines such as $IFN-\gamma$, $TNF-\alpha$, $IL-10$, $IL-12$, $IL-15$, and $IL-17$ \cite{oliveira2005cytokines}. Numerous biochemical studies have investigated the pathogenesis of lepra reaction \cite{ojo2022mycobacterium}, as well as the growth of the bacteria and its chemical consequences \cite{oliveira2005cytokines}.  \\

Motivated by these observations, the authors have done a comprehensive studies dealing with the qualitative  correlations
between crucial bio-markers and the Multi Drug Thearphy (MDT) used in treating the type I lepra reaction. More details of the same can be found in the references \cite{nayak2023study,nayak2023comprehensive}. \\

In the present work we explore and do a exhaustive study of quantitative  correlations between crucial bio-markers and the Multi Drug Thearphy (MDT) used in treating the type I lepra reaction. Since these quantitative studies involves concentration levels of the biomarkers  and dosages of the drugs, a novel model has been developed and the corresponding dynamics and findings have been dealt in this work. \\

The organization of this paper is as follows. In the next section we formulate and describe the  single dosage model and  in section 3 we frame the corresponding optimal control problem and discuss the existence of optimal control followed by the numerical studies in section 4. In section 5 we do the detailed optimal control studies incorporating second drug dosage. We end this work with the discussions and conclusions in section 6.

\section{Mathematical model formulation} \label{sec2}

\subsection{ Single Dosage Model Formulation} \label{sec2a}

Based on the discussion earlier and the  clinical literature and medical guidelines \cite{maymone2020leprosy}  for drug regimen for Lepra reaction 1, as mentioned in tables \ref{tab:treatment1} and \ref{tab:treatment2} below,  we consider a model incorporating  11 compartments, which deal with the concentrations of plasma $c_1(t)$ and effective drug action $c_2(t)$, susceptible schwann cells $S(t)$, infected schwann cells $I(t)$, bacterial load $B(t)$, and five cytokines pivotal in Type-1 Lepra reaction: IFN-$\gamma$, TNF-$\alpha$, IL-$10$, IL-$12$, IL-$15$, and IL-$17$. We analyze the concentration dynamics of these cytokines in Type-1 Lepra reaction by capturing their dynamics in our model compartments. We incorporate compartment $c_1,c_2$ in similar lines to \cite{Iliadis2000Optimizing}.\\

\begin{table}[ht!]
    \centering
    \begin{tabular}{|c|c|c|c|c|}
    \hline
        \textbf{Drugs} & \textbf{Frequency} & \textbf{Dosage 15 years \& above} & \textbf{Dosage 10-14 years} & \textbf{Dosage below 10 years } \\  
        \hline
         Rifampicin & monthly & 600 mg & 450mg & 300mg  \\
         \hline
         Clofazimine & monthly &  300 mg &  150 mg & 100mg \\ 
         \hline
         Dapsone & Daily & 100 mg & 50 mg & 25mg \\
         \hline 
    \end{tabular}
    \caption{ Leprosy Treatment for Multibacillary (MB) Type Leprosy with RFT (Release from Treatment) Criteria: Completion of 12 Monthly Pulses in 18 Consecutive Months }
    \label{tab:treatment1}
\end{table}

\begin{table}[ht!]
    \centering
    \begin{tabular}{|c|c|c|c|c|}
    \hline
        \textbf{Drugs} & \textbf{Frequency} & \textbf{Dosage 15 years \& above} & \textbf{Dosage 10-14 years} & \textbf{Dosage below 10 years } \\  
        \hline
         Rifampicin & monthly & 600 mg & 450mg & 300mg  \\
         \hline
         Dapsone & Daily & 100 mg & 50 mg & 25mg \\
         \hline 
    \end{tabular}
    \caption{ Leprosy Treatment for Paucibacillary (PB) Type Leprosy with RFT (Release from Treatment) Criteria: Completion of 6 Monthly Pulses in 9 Consecutive Months }
    \label{tab:treatment2}
\end{table}

\newpage

The 2018 WHO guidelines advocate for a Multi Drug Therapy (MDT) regimen for leprosy comprising three drugs: Rifampin, Dapsone, and Clofazimine \cite{maymone2020leprosy,tripathi2013essentials}. The influence of each of these drugs and their mathematical representation as control variables are incorporated as follows:

 $$U=\Big\{D_{i}(t) \ \big| \ D_{i}(t)\in[0,D_{i}max],  1\leq i \leq 3,  t\in[0,T]\Big\}$$ \\

\begin{align}
     \frac{dc_{1}}{dt}  \ &=  \ \frac{D_{1}(t) + D_{2}(t) + D_{3}(t)}{V_{1}} \ - \big( k_{12}  + k_{1}\big)c_{1}  \label{sec2equ1} \\
      \frac{dc_{2}}{dt} \ & = \ k_{12}\frac{V_{1}}{V_{2}} c_{1} \ - k_{2}c_{2}  \label{sec2equ2} \\
    \frac{dS}{dt} \ & = \ \omega \ - \beta S B  - \gamma S  - \mu_{1} S - (\mu_{d_1} + \mu_{d_2} +\mu_{d_3}) c_{1}(t-\tau_{d})S\label{sec2equ3} \\
	  \frac{dI}{dt} \ &= \ \beta SB \ - \delta I  - \mu_{1} I - \eta( k_{d_1} + k_{d_2} + k_{d_3} ) \cdot (c_{2} - C_{min})  \cdot  H\left[(c_{2} - C_{min})\right] \cdot I \label{sec2equ4}\\ 
   	\frac{dB}{dt} \ &= \ \alpha I   \ - y B - \mu_{2} B  -  (k_{d_1} + k_{d_2} + k_{d_3})\cdot (c_{2} - C_{min} )  \cdot  H\left[(c_{2} - C_{min})\right] \cdot B\label{sec2equ5}\\
     \frac{dI_{\gamma}}{dt} \ &= \ \alpha_{I_{\gamma}} I   \ - \left[\delta_{T_{\alpha}}^{I_{\gamma}}T_{\alpha} + \delta_{I_{12}}^{I_{\gamma}}I_{12} +\delta_{I_{15}}^{I_{\gamma}}I_{15} +\delta_{I_{17}}^{I_{\gamma}}I_{17}\right]I - \mu_{I_{\gamma}}\big(I_{\gamma} - Q_{I_{\gamma}}\big) \label{sec2equ6}\\
     \frac{dT_{\alpha}}{dt} \ &= \ \beta_{T_{\alpha}}I_{\gamma} I   \  - \mu_{T_{\alpha}}\big(T_{\alpha} - Q_{T_{\alpha}}\big) \label{sec2equ7}\\
     \frac{dI_{10}}{dt} \ &= \ \alpha_{I_{10}} I   \ - \delta_{I_{\gamma}}^{I_{10}}I_{\gamma} - \mu_{I_{10}}\big(I_{10} - Q_{I_{10}}\big) \label{sec2equ8}\\
     \frac{dI_{12}}{dt} \ &= \ \beta_{I_{12}} I_{\gamma} I \ -  \mu_{I_{12}}\big(I_{12} - Q_{I_{12}}\big) \label{sec2equ9}\\
     \frac{dI_{15}}{dt} \ &= \  \beta_{I_{15}} I_{\gamma}I \ -  \mu_{I_{15}}\big(I_{15} - Q_{I_{15}}\big) \label{sec2equ10}\\
     \frac{dI_{17}}{dt} \ &= \  \beta_{I_{17}} I_{\gamma}I \ -  \mu_{I_{17}}\big(I_{17} - Q_{I_{17}}\big) \label{sec2equ11}
\end{align}

The biological meaning of all symbols involved in the above system of differential equations  (\ref{sec2equ1}) - (\ref{sec2equ11}) is described in tables \ref{param_tab1} and \ref{param_tab2}. \newpage

\begin{table}[bht!]
    \centering
    \begin{tabular}{|c|c|}
        \hline
        \textbf{Symbols} & \textbf{Biological Meaning} \\  
        \hline
      		$c_1$ & Concentration of plasma compartment\\
      		\hline
      		$c_2$ & Concentration of site of action compartment\\
      		\hline
      		$S$ & Susceptible schwann cells\\
      		\hline
      		$I$ & Infected schwann cells \\
      		\hline
      		$B$ & Bacterial load \\
      		\hline
      		$I_{\gamma}$ & Concentration of IFN-$\gamma$ \\
      		\hline
             $T_{\alpha}$ & Concentration of TNF-$\alpha$ \\
      		\hline
             $I_{10}$ & Concentration of IL-10 \\
      		\hline
             $I_{12}$ & Concentration of IL-12 \\
      		\hline
             $I_{15}$ & Concentration of IL-15 \\
             \hline
             $I_{17}$ & Concentration of IL-17 \\
             \hline        
      		$D_1$ & Amount of rifampin drug introduced  \\
      		\hline   
                $D_2$ & Amount of dapsone drug introduced  \\
      		\hline   
                $D_3$ & Amount of clofazimine drug introduced  \\
      		\hline   
      		$V_1$ & Volume of plasma compartment  \\	
            \hline
      		$k_{12}$ &Exchange rate of drugs from plasma to site of action\\
      		\hline
      		$k_1$ & Rate of elimination of drugs from plasma compartment \\
      		\hline
      		$V_{2} $ & Volume of plasma action compartment \\
      		\hline
      		$k_{2}$ & Rate of elimination of drugs from action compartment \\
      		\hline
      		$\omega$ & Natural birth rate of the susceptible cells \\
      	    \hline
      		$\tau$& Delay time \\
      		\hline
             $\beta$ & Rate at which schwann cells are infected\\
             \hline
      		$\gamma$& Death rate of the susceptible cells due to cytokines \\
      		\hline
      		$\mu_1$& Natural death rate of schwann cells and infected schwann cells  \\ 
      		\hline
      		$\delta$ & Death rate of infected schwann cells due to cytokines   \\ 
      		\hline
            $\tau_{d}$ & Delay due to toxicity of the drug \\ 
      		\hline
             $\mu_{d}$ & Delayed toxicity of drug concentrations   \\ 
      		\hline
        $\mu_{d_1}$ & Delayed toxicity of rifampin drug concentration   \\ 
      		\hline
        $\mu_{d_2}$ & Delayed toxicity of dapsone drug concentration    \\ 
      		\hline
        $\mu_{d_3}$ &   Delayed toxicity of clofazimine drug concentration  \\ 
      		\hline
                    $\eta$ & Coefficient ratio of bacteria and infected cell  \\ 
      		\hline
      		$H$ & Heaviside step function  \\ 
        \hline
    \end{tabular}
    \caption{Description of variables and parameters present in the system  of ODE's (\ref{sec2equ1}) - (\ref{sec2equ11})}
    \label{param_tab1}
\end{table}

\begin{table}[bht!]
    \centering
    \begin{tabular}{|c|c|}
        \hline
        \textbf{Symbols} & \textbf{Biological Meaning} \\  
        \hline
            $\alpha$ & Burst rate of infected schwann cells realising the bacteria\\
      		\hline
      		$y$ & Rates at which M. Leprae is removed by cytokines\\
      		\hline
      		$\mu_{2}$ & Natural death rate of M. Leprae\\
      		\hline
      		$\alpha_{I_{\gamma}}$ & Production rate of IFN-$\gamma$\\
      		\hline
      		$\delta_{T_{\alpha}}^{I_{\gamma}}$ & Inhibition of IFN-$\gamma$ due to TNF-$\alpha$\\
      		\hline 
             $\delta_{I_{12}}^{I_{\gamma}}$ &Inhibition of IFN-$\gamma$ due to IL-12 \\
      		\hline
             $\delta_{I_{15}}^{I_{\gamma}}$ & Inhibition of  IFN-$\gamma$ due to IL-15 \\
      		\hline
             $\delta_{I_{17}}^{I_{\gamma}}$ & Inhibition of IFN-$\gamma$ due to IL-17 \\
      		\hline
             $\mu_{I_{\gamma}}$ & Decay rate of IFN-$\gamma$ \\
             \hline
             $\beta_{T_{\alpha}}$ & Production rate of TNF-$\alpha$ \\
             \hline
      		$\mu_{T_{\alpha}}$ & Decay rate of TNF-$\alpha$ \\
      		\hline
      		$\alpha_{I_{10}}$ & Production rate of IL-10\\
      		\hline
      		$\delta_{I_{\gamma}}^{I_{10}}$ & Inhibition IL-10 of due to IFN-$\gamma$\\
      		\hline
      		$\mu_{I_{10}}$ & Decay rate of IL-10\\
      		\hline
      		$\beta_{I_{12}}$ & Production rate of IL-12\\
      		\hline
      		$\mu_{I_{12}}$ & Decay rate of IL-12\\
      		\hline
             $\beta_{I_{15}}$ & Production rate of IL-15\\
      		\hline
      		$\mu_{I_{15}}$ & Decay rate of IL-15\\     	\hline
            $\beta_{I_{17}}$ & Production rate of IL-17\\
      		\hline       
      		$\mu_{I_{17}}$ & Decay rate of IL-17\\
      		\hline   
               $Q_{I_{\gamma}}$ & Quantity of IFN-$\gamma$before infection \\
      		\hline
             $Q_{T_{\alpha}}$ &Quantity of TNF-$\alpha$ before infection \\
      		\hline
             $Q_{I_{10}}$ &Quantity of IL-10 before infection \\
      		\hline
             $Q_{I_{12}}$ &Quantity of IL-12 before infection\\
      		\hline            
             $Q_{I_{15}}$ & Quantity of IL-15 before infection \\
      		\hline
             $Q_{I_{17}}$ & Quantity of IL-17 before infection\\
        \hline
    \end{tabular}
    \caption{Description of variables and parameters present in the system of ODE's (\ref{sec2equ1}) - (\ref{sec2equ11})}
    \label{param_tab2}
\end{table}

\subsection{ Single Dosage Model Description}

We now give a brief overview of each compartment in the model.\\

\textbf{$\mathbf{c_1(t)}$ compartment:} \ In equation (\ref{sec2equ1}), the first term  $\frac{D_{1}(t) + D_{2}(t) + D_{3}(t)}{V_{1}}$, represents the administration of the drugs, which is then divided by the volume of the plasma to yield the drug concentration in the plasma. The term $-k_{12}c_1$ accounts for the transfer of drug concentration from the plasma to the site of drug action compartment, while $-k_{1}c_{1}$ represents the elimination of drug concentration directly from the plasma.\\
\\
\textbf{$\mathbf{c_2(t)}$ compartment:} \ In equation (\ref{sec2equ2}), the first term accounts for the transferred drug concentration from the plasma compartment into it. The second term, $-k_2c_2$, represents the elimination of the drug from this compartment.\\
\\
\textbf{$\mathbf{S(t)}$ compartment:} \ In equation (\ref{sec2equ3}), the first term corresponds to the natural birth rate of susceptible Schwann cells. The subsequent term accounts for the reduction in the number of susceptible cells S(t) at a rate $\beta$ due to infection by the bacteria, following the law of mass action. The parameter $\gamma$ represents the death of susceptible cells due to the cytokines response, while $\mu_1$ represents the natural death rate of susceptible cells. Lastly, the final term illustrates the death of schwann cells due to the drugs present in the host body's plasma, with a delay $\tau_d$.\\
\\
\textbf{$\mathbf{I(t)}$ compartment:} \ The growth of infected cells is represented by the term $\beta S B$ in equation (\ref{sec2equ4}). These cells decrease due to the cytokines response at a rate $\delta$, and also experience natural death at a rate $\mu_1$. The final term represents the decay of infected cells due to $c_2$. Within this specific term, $H$ denotes the Heaviside step function \cite{web1}, and $\eta$ is the coefficient ratio of bacteria to infected cells. This ratio signifies that the death of one infected cell will eliminate all bacteria present within it.\\

\textbf{$\mathbf{B(t)}$ compartment:} \ The bacterial load increases indirectly due to an increase in $I(t)$, as the burst of more cells with bacteria leads to increased replication. This rate, denoted by $\alpha$, is accounted for in the first term of equation (\ref{sec2equ5}). $y$ represents the rate of clearance of $B(t)$ due to cytokines responses, while $\mu_2$ is the natural death rate of bacteria. The last term of this equation represents the reduction of the bacterial load due to the concentration $c_2$, owing to its bactericidal and bacteriostatic properties.\\

The compartments  $\mathbf{I_{\gamma}(t)},\mathbf{T_{\alpha}(t)},\mathbf{I_{10}(t)},\mathbf{I_{12}(t)},\mathbf{I_{15}(t)},\mathbf{I_{17}(t)}$ are influenced similarly as in \cite{nayak2023study}.

\section{Optimal Control Studies for Single Dosage Model} \label{sec3}

Based on above model we define the \\

{\textit{Cost functional}}:
\begin{equation}
  \begin{aligned}
   \mathcal{J}_{min}\big(I,B,D_{1},D_{2},D_{3}\big) \ &= \int_{0}^{T} \Big(I(t) + B(t)+P\cdot D^{2}_{1}(t) + Q\cdot D^{2}_{2}(t) +  R\cdot D_{3}^{2}(t)  \Big) dt 
  \label{costf}
  \end{aligned}
\end{equation} \\
{\textit{Lagrangian}} of the {\textit{cost functional}} is given by 
\begin{equation}
  \begin{aligned}
   L\big(I,B,D_{1},D_{2},D_{3}\big) \ &= \ I(t) + B(t)+P\cdot D^{2}_{1}(t) + Q\cdot D^{2}_{2}(t) +  R\cdot D_{3}^{2}(t)  
  \label{opti}
  \end{aligned}
\end{equation} 

Admissible solution set given as follows
\[\Omega = \Big\{(I,B,D_{1},D_{2},D_{3}) \big| I,B \text{ are satisfying system of O.D.E's (\ref{sec2equ1}) - (\ref{sec2equ11})}, D_{i}(t) \in [0,D_{i\max}], 1\leq i \leq 3, t\in[0,T] \Big\}\] \\

{\large{\bf{Existence of Optimal Control} }}\\

In this section, we prove the existence of solution to the optimal control to the system (\ref{sec2equ1}) - (\ref{costf}) by using theorem 2.2 in \cite{Boyarsky1976existence}.
\begin{theorem} \label{th1}
For the control system (\ref{sec2equ1}) - (\ref{sec2equ11}) with admissible control set $U$ and the cost functional  (\ref{costf}) there exist an 3-tuple of optimal control $\big(D_{1}^{*},D_{2}^{*},D_{3}^{*}\big) \in  U.$ Further more optimal state variables of system (\ref{sec2equ1}) - (\ref{sec2equ11}), which minimize the cost functional are given as
\[\mathcal{J}_{min}\big(I^{*},B^{*},D_{1}^{*},D_{2}^{*},D_{3}^{*}\big) \ =  \underset{(D_{1},D_{2},D_{3})\in U}{\min} \mathcal{J}_{min}\big(I,B,D_{1},D_{2},D_{3}\big).\]
\end{theorem}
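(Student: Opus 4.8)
\emph{Proof proposal.} The plan is to verify the hypotheses of the existence theorem invoked from \cite{Boyarsky1976existence} (its Theorem 2.2), which is of Filippov--Cesari type. Writing the state vector as $X = (c_{1}, c_{2}, S, I, B, I_{\gamma}, T_{\alpha}, I_{10}, I_{12}, I_{15}, I_{17})$ and the control as $D = (D_{1}, D_{2}, D_{3})$, it suffices to check: (i) the admissible set $\Omega$ is non-empty; (ii) the control-constraint set $U$ is compact and convex; (iii) on $[0,T]$ the right-hand side of the state system is bounded by an affine function of $X$ and $D$; (iv) the integrand $L(I, B, D_{1}, D_{2}, D_{3})$ of (\ref{costf}) is convex in $D$ for each fixed $(I, B)$; and (v) there are constants $c > 0$ and $\tilde{c} \ge 0$ with $L \ge c\,\|D\|^{2} - \tilde{c}$. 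Granting (i)--(v), the cited theorem produces an optimal triple $(D_{1}^{*}, D_{2}^{*}, D_{3}^{*}) \in U$ together with the associated optimal states minimizing (\ref{costf}), which is exactly the assertion.

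The first task is the a priori analysis underlying (i) and (iii). For any $D \in U$, the delay system (\ref{sec2equ1})--(\ref{sec2equ11}) has a local Carath\'eodory solution; standard invariance arguments show $c_{1}, c_{2}, S, I, B$ stay non-negative (each of these vector fields points inward on the corresponding coordinate hyperplane, in a cascading fashion), and then uniform bounds on $[0,T]$ follow by a chain of Gronwall estimates: $S(t) \le \max\{S(0),\ \omega/(\gamma+\mu_{1})\}$; $c_{1}$ is bounded using the forcing $(\sum_{i} D_{i\max})/V_{1}$ and linear decay, whence so is $c_{2}$; the pair $(I, B)$ is governed, once $S$ is bounded, by a system with at most linear growth in $(I, B)$; and the six cytokine compartments form, once $I$ is known to be bounded, a linear block with bounded time-varying coefficients. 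This confines the trajectory to a compact region $\mathcal{R} \subset \mathbb{R}^{11}$ independent of the control, which gives (iii) because each right-hand side is then a polynomial restricted to $\mathcal{R} \times U$, hence Lipschitz and affinely bounded; and it gives (i) since $D \equiv 0$ produces an admissible element of $\Omega$.

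Conditions (ii), (iv), (v) are then routine. $U = \prod_{i=1}^{3} [0, D_{i\max}]$ is a compact convex box, giving (ii). With $(I, B)$ fixed, $L = I + B + P D_{1}^{2} + Q D_{2}^{2} + R D_{3}^{2}$ is the sum of a constant and a positive-definite quadratic form in $D$ (assuming the penalty weights $P, Q, R > 0$), hence strictly convex, giving (iv). On $\mathcal{R}$ one has $I, B \ge 0$, so $L \ge P D_{1}^{2} + Q D_{2}^{2} + R D_{3}^{2} \ge \min\{P, Q, R\}\,\|D\|^{2}$, giving (v) with $\tilde{c} = 0$ and exponent $2 > 1$. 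Assembling (i)--(v) and applying Theorem 2.2 of \cite{Boyarsky1976existence} completes the argument.

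The step I expect to be the main obstacle is the a priori stage, specifically the interaction of the Heaviside switch $H[c_{2} - C_{min}]$ in (\ref{sec2equ4})--(\ref{sec2equ5}) with the discrete toxicity delay $\tau_{d}$ in (\ref{sec2equ3}): the vector field is only piecewise continuous in the state, so classical existence/uniqueness and the measurable-selection machinery behind the Filippov--Cesari theorem cannot be cited verbatim. I would handle this by working on the successive delay intervals $[k\tau_{d}, (k+1)\tau_{d}]$ and, within each, on the sub-intervals between consecutive crossings of the threshold $\{c_{2} = C_{min}\}$, where the dynamics are smooth; the bounds are propagated across the finitely many breakpoints in $[0, T]$, and (since $c_{2}$ is non-negative and $C_{min} > 0$) the switch does not disturb the coercivity/convexity structure, which only involves the control and the non-negative components $I, B$. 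An alternative is to interpret solutions in the Filippov sense from the outset, for which the reachable set stays closed and the same verification goes through.
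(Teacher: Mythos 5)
Your proposal is correct in spirit but follows a genuinely different verification route than the paper. You check the classical Filippov--Cesari/Fleming--Rishel package: non-emptiness of the admissible set, compactness and convexity of the control box $U$, an a priori bound on trajectories obtained by positivity-plus-Gronwall cascades (first $S$ and $c_1,c_2$, then $(I,B)$, then the cytokine block), convexity of the integrand in $D$, and quadratic coercivity $L\ge \min\{P,Q,R\}\|D\|^2$. The paper instead verifies the literal hypotheses of Theorem 2.2 of the cited reference: Lipschitz continuity of the field (F1), a structural control-difference inequality $f_1(t,x,D^{(1)})-f_1(t,x,D^{(2)})\le F_1(t,x)\,[g_1(D^{(1)})-g_1(D^{(2)})]$ with $g_1(D)=(D_1+D_2+D_3)/V_1$ and $g_j\equiv 0$ for the control-independent equations via the reference's Corollary 2.1 (F2/F4), integrability of the $F_j$ along trajectories (F3), and then continuity, measurability, a lower bound $\Psi\equiv\min\{I(0),B(0)\}$, convexity and monotonicity of the running cost (C1--C5). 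What your route buys is substantive: you actually derive the boundedness and non-negativity of the states that the paper merely asserts, and you are the only one to confront the two features that make the dynamics non-classical, namely the Heaviside switch $H[c_2-C_{\min}]$ and the discrete delays $\tau_d$, $\tau$; the paper's claim that the $f_j$ are polynomials (hence smooth) silently ignores both. What the paper's route buys is a shorter, checklist-style argument matched verbatim to its cited theorem. Two caveats on your side: your list (i)--(v) is not the hypothesis set of the theorem you name, so you should either cite a standard Fleming--Rishel/Cesari existence theorem instead or restate the reference's actual conditions; and the claim of finitely many crossings of $\{c_2=C_{\min}\}$ is asserted rather than proved, so you should either establish it (e.g.\ via monotonicity of $c_2$ between dosing events) or commit fully to the Filippov-solution framework, in which case the convexity of the extended velocity set should be noted explicitly (it holds because the dynamics are affine in $D$ and $L$ is convex in $D$).
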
 
\begin{proof}
    Let us consider  
    $\frac{dc_{1}}{dt}  = f_{1}(t,x,D)$ , 
    $\frac{dc_{2}}{dt}  = f_{2}(t,x,D)$ ,
    $\frac{dS}{dt}  = f_{3}(t,x,D)$ ,
    $\frac{dI}{dt}  = f_{4}(t,x,D)$ ,\\
    $\frac{dB}{dt}  = f_{5}(t,x,D)$ ,
    $\frac{dI_{\gamma}}{dt}  = f_{6}(t,x,D)$ ,
    $\frac{dT_{\alpha}}{dt}  = f_{7}(t,x,D)$ ,
    $ \frac{dI_{10}}{dt}  = f_{8}(t,x,D)$ , 
    $ \frac{dI_{12}}{dt}  = f_{9}(t,x,D)$ ,\\
    $ \frac{dI_{15}}{dt}  = f_{10}(t,x,D)$ ,
    $ \frac{dI_{17}}{dt}  = f_{11}(t,x,D)$. of the control system (\ref{sec2equ1}) - (\ref{sec2equ11})
    where $x \in X $ denotes state variables$\big(c_{1},c_{2},S,I,B,I_{\gamma},T_{\alpha},I_{10},I_{12},I_{15},I_{17}\big)$, and $D \in U$ denotes control variables $\big(D_{1},D_{2},D_{3}\big)$.\\
     Take $f = \big(f_{1},f_{2},f_{3},f_{4},f_{5},f_{6},f_{7},f_{8},f_{9},f_{10},f_{11}\big)$ ,we have $X \in \mathbb{R}^{11}$ and 
    \[ f : [0, T] \times X \times U \rightarrow \mathbb{R}^{11} \]
     since $f_{j}'s$ are polynomials so $f$ is a continuous function with respect to $t$ and $x$ for each $D_{i}'s$.\\ where $1\leq i \leq 3$,$1\leq j \leq 11.$\\
     Now we try to show that $(\mathbf{F1})$ to $(\mathbf{F3})$ conditions in theorem 2.2 of \cite{Boyarsky1976existence} holds true for all $f_j$'s. \\
     
      \textbf{F1:} Here each of the $f_j$'s has a continuous and bounded partial derivative implying that  $f$ is  Lipschitz's continuous.\\
      
       \textbf{F2:} let define $g_{1}(D_1,D_2,D_3) = \frac{D_{1}(t) + D_{2}(t) + D_{3}(t)}{V_{1}} $
       which is bounded on $U$. \\
       so
       
\begin{equation}
  \begin{aligned}
  \frac{ f_1(t,x,D^{(1)})- f_1(t,x,D^{(2)})}{\big[g_1(D^{(1)})-g_1(D^{(2)})\big]}
 &= \frac{\big[D^{(1)}_{1}+D^{(1)}_{2}+D^{(1)}_{3}-D^{(2)}_{1}-D^{(2)}_{2}-D^{(2)}_{3}\big]}{\big[D^{(1)}_{1}+D^{(1)}_{2}+D^{(1)}_{3}-D^{(2)}_{1}-D^{(2)}_{2}-D^{(2)}_{3}\big]}\\
   & \leq n = F_1(t,x)\\
 f_1(t,x,D^{(1)})- f_1(t,x,D^{(2)}) &\leq F_1(t,x)\cdot \big[g_1(D^{(1)})-g_1(D^{(2)})\big]
  \end{aligned}
\end{equation}
where n is a real number and $n \geq 1$ .
Since $U$ is compact and $g_1$ is continuous by result that if a function is continuous and domain is compact then the range of function is compact so $g_1(U)$ will be compact.\\
Also since the function $g_1$ is linear so it range $g_1(U)$ will be convex. Since $U$ is non-negative set so $g_{1}^{-1}$ will be non-negative.\\
For satisfy this condition for remaining $f_{j}$'s we use corollary 2.1 of \cite{Boyarsky1976existence} which show that we can use condition \textbf{F4} instead of \textbf{F2}.Hence considering $g_{2}(D_1,D_2,D_3) = 0$, which is bounded measurable function and $F_2(t,x) = 1$ we have relation 
\[ f_2(t,x,D^{(1)})- f_2(t,x,D^{(2)})= 0 = 1\cdot0 = F_2(t,x)\cdot \big[g_2(D^{(1)}-D^{(2)})\big]\]
Similarly taking $F_j(t,x) = 1$ and $g_{j}(D_1,D_2,D_3) = 0$ for j = 3,4,5,6,7,8,9,10,11 we have relations
\[ f_j(t,x,D^{(1)})- f_j(t,x,D^{(2)}) = F_j(t,x)\cdot \big[g_j(D^{(1)}-D^{(2)})\big]\]
Therefore $f$ satisfied condition \textbf{F2}.

 \textbf{F3:} Since $c_{1},c_{2},S,I,B,I_{\gamma},T_{\alpha},I_{10},I_{12},I_{15},I_{17}$ are bounded on $[0,T]$ \\ 
 hence $F_j(\bullet,x^{u}(\bullet)) \in  \mathscr{L}_{1}$ for $1\leq j \leq 11$.
Now we have to show that the running cost function\\
$ C : [0, T] \times X \times U \rightarrow \mathbb{R} $ as
\[C(t,x,D) = I(t) + B(t)+P\cdot D^{2}_{1}(t) + Q\cdot D^{2}_{2}(t) +  R\cdot D_{3}^{2}(t)  \]
satisfy the conditions \textbf{C1-C5} of theorem 2.2 of \cite{Boyarsky1976existence}.\\
\textbf{C1:} Since $C(t,\cdot,\cdot)$ is sum of all continuous functions of t so it is a continuous function for all $t \in [0,T]$.\\
\textbf{C2:}  $I,B$  and all $D_{i}$'s are bounded implying that $C(\cdot,x,D)$ is bounded and hence measurable for each $x\in X$ and $D_{i} \in U$.\\
\textbf{C3:} Consider $\Psi(t) = \kappa$ such that $ \kappa = \min \{I(0),B(0)\} $ then $\Psi$ will bounded such that for all $t\in [0,T]$, $x \in X$ and $D_{i}\in U,$ we have 
\[C(t,x,D)\geq \Psi(t)\]
\textbf{C4:} Since $C(t,x,D)$ is sum of the function  which are convex in $U$ for each fixed $(t,x)\in [0,T]\times X $\\  therefore $C(t,x,D)$ follows the same.\\
\textbf{C5:} Using similar type of argument, we can easily show that for each fixed $(t,x)\in [0,T]\times X $, $C(t,x,D)$ is a monotonically increasing function. \\
Hence by using  theorem 2.2 of \cite{Boyarsky1976existence} for the system (\ref{sec2equ1}) - (\ref{sec2equ11}) we have showed that it satisfies hypothesis. this implies that an Optimal Control and Optimal State variables for the system (\ref{sec2equ1}) - (\ref{sec2equ11}) exists and minimizes the cost functional. 
\end{proof}



\section{Numerical Studies with Reference to Single Dosage Model} \label{sec4}
\subsection{Theory} \label{sec4a} 
In this section, we elaborate on the methodology employed to assess the optimal control problem  (\ref{sec2equ1}) - (\ref{costf}) described earlier. The evaluation of optimal control variables and state variables is conducted using the forward-backward sweep method \cite{mcasey2012convergence} in conjunction with the Pontryagin maximum principle \cite{liberzon2011calculus}.

The Hamiltonian of the control system (\ref{sec2equ1}) - (\ref{sec2equ11}) is given by

\begin{align}
\begin{split}
    \mathcal{H}(I,B,D_{1},D_{2},D_{3},\lambda) & = I(t) + B(t)+P D^{2}_{1}(t) + Q D^{2}_{2}(t) +  R D_{3}^{2}(t) + \lambda_{1}\frac{dc_1}{dt} + \lambda_{2}\frac{dc_2}{dt} + \lambda_{3}\frac{dS}{dt} + \lambda_{4}\frac{dI}{dt} + \lambda_{5}\frac{dB}{dt} \\
    \ & + \lambda_{6}\frac{dI_{\gamma}}{dt} + \lambda_{7}\frac{dT_{\alpha}}{dt} + \lambda_{8}\frac{dI_{10}}{dt} + \lambda_{9}\frac{dI_{12}}{dt} +\lambda_{10}\frac{dI_{15}}{dt} +\lambda_{11}\frac{dI_{17}}{dt}
\end{split}
\end{align}
where $\lambda = (\lambda_{1},\lambda_{2},\lambda_{3},\lambda_{4},\lambda_{5},\lambda_{6},\lambda_{7},\lambda_{8},\lambda_{9},\lambda_{10},\lambda_{11})$ is co-state variable or adjoint vector.
Since we have $D^{*} = (D_{1}^{*},D_{2}^{*},D_{3}^{*})$ and $X^{*} = (x_{1},x_{2},x_{3},x_{4},x_{5},x_{6},x_{7},x_{8},x_{9},x_{10},x_{11})$  as optimal control and state variable respectively,  using Pontryagin maximum principle there exists an optimal co-state variable say $\lambda^{*}$\\
which satisfy the canonical equation
\begin{equation}\label{co}
       \frac{d\lambda_{j}}{dt} = -\frac{\partial \mathcal{H}(X^{*},D^{*},\lambda^{*})}{\partial x_{j}}
\end{equation}
where $j = 1,2,3,...,11$ \\

Using above equation we get system of ODE's for co-state variables as follows\\
\begin{align}
    \frac{d\lambda_{1}}{dt} &= (k_{12} + k_1)\lambda_{1} - k_{12}\left(\frac{V_1}{V_2}\right)\lambda_{2} + (\mu_{d_1} + \mu_{d_2} + \mu_{d_3})S\lambda_{3} \\
   \frac{d\lambda_{2}}{dt} &= k_2\lambda_{2} + \eta(k_{d_1}+ k_{d_2} + k_{d_3}) H[(c_2-c_{min})] \cdot I\lambda_{4} + (k_{d_1} + k_{d_2} + k_{d_3})  H[(c_2-c_{min})] \cdot B\lambda_{5} \\
    \frac{d\lambda_{3}}{dt} &= \big(\beta B + \mu_1 + \gamma + (\mu_{d_1} + \mu_{d_2} + \mu_{d_3})c_1(t - \tau_d)\big)\lambda_{3} - \beta B \lambda_{4} \\
    \begin{split}
    \frac{d\lambda_{4}}{dt} &= \big(\delta + \mu_{1} + (\eta k_{d_1} + \eta k_{d_2} + \eta k_{d_3})(c_2 - C_{min}) H[(c_2 - C_{min})])\lambda_{4} - \alpha \lambda_5 - \alpha_{I_{\gamma}}\lambda_{6} \\
    \ & + \left(\delta_{T_{\alpha}}^{I_{\gamma}}T_{\alpha} + \delta_{I_{12}}^{I_{\gamma}}I_{12} +\delta_{I_{15}}^{I_{\gamma}}I_{15} +\delta_{I_{17}}^{I_{\gamma}}I_{17}\right) \lambda_{6} \beta_{T_{\alpha}}I_{\gamma} \lambda_{7} - \alpha_{I_{10}} \lambda_{8} - \beta_{I_{12}} I_{\gamma} \lambda_{9} - \beta_{I_{15}} I_{\gamma} \lambda_{10} - \beta_{I_{17}} I_{\gamma} \lambda_{11} - 1 
    \end{split} \\
   \frac{d\lambda_{5}}{dt} &= \beta S \lambda_{3} - \beta S \lambda_{4} + \left(y + \mu_{2} + (k_{d_1} + k_{d_2} + k_{d_3})(c_2 - C_{min}) H[(c_2 - C_{min})]\right)\lambda_{5} - 1 \\
    \frac{d\lambda_{6}}{dt} &= \mu_{I_{\gamma}}\lambda_{6} - \beta_{T_\alpha}I\lambda_{7} + \delta_{I_{10}}^{I_\gamma} I\lambda_8 - \beta_{I_{12}}I\lambda_{9} - \beta_{I_{15}}I\lambda_{10} - \beta_{I_{17}}I\lambda_{11} \\
    \frac{d\lambda_{7}}{dt} &= \delta_{T_{\alpha}}^{I_\gamma} I\lambda_6 + \mu_{T_{\alpha}}\lambda_{7} \\
    \frac{d\lambda_{8}}{dt} &= \mu_{I_{10}} \lambda_{8} \\
    \frac{d\lambda_{9}}{dt} &= \delta_{I_{12}}^{I_\gamma} I\lambda_6  + \mu_{I_{12}}\lambda_{9}  \\
    \frac{d\lambda_{10}}{dt} &= \delta_{I_{15}}^{I_\gamma} I\lambda_6  + \mu_{I_{15}}\lambda_{10} \\
    \frac{d\lambda_{11}}{dt} &= \delta_{I_{17}}^{I_\gamma} I\lambda_6  + \mu_{I_{17}}\lambda_{11}
\end{align}
and the transversality condition $\lambda_{i}(T) = \frac{\partial \phi}{\partial x_i}\big|_{t=T} = 0 $ for all $i = 1,2,3,...,11$ where in this case, the terminal cost function, represented by $\phi$, is constantly zero. \\

Now we use $Newton's \ Gradient \ method$ from \cite{edge1976function} to obtain the optimal value of the controls. \\

For this recursive formula is employed to update the control at each step of the numerical simulation as follows

\begin{align}\label{updatecontrol1}
    D_{i}^{k+1}(t) = D_{i}^{k}(t) + \theta_{k}d_k
\end{align}
Here,  $D_{i}^{k}(t)$ represents the control value at the $k^{th}$ iteration at a given time $t$, $d_k$ signifies the direction, and $\theta_k$ denotes the step size. The direction $d_k$ can be evaluated as negative of gradient of the objective function i.e $d_k = - g_i(D_{i}^{k})$ ,where $g_i(D_{i}^{k}) = \frac{\partial \mathcal{H}}{\partial D_i}\big|_{D_{i}^{k}(t)}$ as mentioned in\cite{edge1976function}.The step size $\theta_k$ is determined at each iteration using a linear search technique aimed at minimizing the Hamiltonian,$\mathcal{H}$. Therefore (\ref{updatecontrol1}) can become as 
\begin{align}\label{updatecontrol2}
    D_{i}^{k+1}(t) = D_{i}^{k}(t) - \theta_{k}\frac{\partial \mathcal{H}}{\partial D_i}\Big|_{D_{i}^{k}(t)}
\end{align}
To implement the aforementioned approach, we need to compute the gradient for each control, denoted as $g_i(D_{i}^{k})$ , which are listed as follows
\begin{align*}
        g_1(D_{1}) = 2PD_{1}(t) + \frac{\lambda_{1}}{V_1}\\
        g_2(D_{2}) = 2QD_{2}(t) + \frac{\lambda_{2}}{V_1}\\
        g_3(D_{3}) = 2RD_{3}(t) + \frac{\lambda_{1}}{V_1} 
\end{align*}
\subsection{Numerical simulations} \label{sec4b}

In this section, we conduct numerical simulations to quantitatively examine the correlation between cytokine levels in Type-1 Lepra reaction and the drugs utilized in MDT. \\

The parameters' values utilized are sourced from diverse clinical articles, with corresponding references provided in table \ref{parameter1}.\\

Doubling time information was accessible for certain parameters like $\beta$, $\gamma$, $\alpha$ and $\delta$, allowing estimation through the following formula:
$$rate\ \% = \frac{ln(2)}{doubling \ time} \cdot 100 $$
We subsequently divide these percentage rates by 100 to derive the values of these parameters. \\

In certain instances, we calculated the average of the resulting yields from various mediums, including 7-AAD and TUNEL, as outlined in \cite{oliveira2005cytokines}. We have taken $\tau_d = 30.$ \\

Certain parameters are meticulously adjusted to meet specific hypotheses or assumptions, facilitating the numerical simulation process.\\

For these simulations, we utilize a time duration of 30 days (T = 30), and most of the parameter values are selected from table \ref{parameter1}  and other values are chosen as $$ V_1 = 1200, \ V_2 = 500, \ \omega = 20.90, \ \beta = 0.000030, \ \mu_1 = 0.00018, \ \gamma = 0.01795, \ \alpha = 0.2, \  y = 0.03, \  \alpha_{I_{10}(t)} = 0.5282.$$\\

Initially, we solved the system numerically without any drug intervention. All numerical computations were performed using MATLAB, and we employed the fourth-order Runge-Kutta method to solve the system of ODEs. To determine the value of $\theta$ in each iteration, we utilized MATLAB's fminsearch() function. In this context, we regard the initial values of the state variables as $c_1(0) = 0$, $c_2(0) = 0$, $S(0) = 520$, $I(0) = 250$, $B(0) = 2500$, $I_{\gamma}(0) = 50$, $T_{\alpha}(0) = 50$, $I_{10}(0) = 75$, $I_{12}(0) = 125$, $I_{15}(0) = 125$, and $I_{17}(0) = 100$ as in \cite{ghosh2021mathematical,liao2013role}.\\

Moreover, to simulate the system with controls, we employ the forward-backward sweep method, commencing with the initial control values set to $20, 100, 10$ for $D_1, D_2, D_3$ respectively and estimate the state variables forward in time. Subsequently, since the transversality conditions involve the adjoint vector's value at the end time T, we compute the adjoint vector backward in time.\\

Utilizing the state variables and adjoint vector values, we compute the control variables at each time step, which are subsequently updated in each iteration. The control update strategy involves implementing Newton's gradient method, as described by equation (\ref{updatecontrol2}). We iterate this process until the convergence criterion, as outlined in reference \cite{edge1976function}, is satisfied.\\

The weights $P$, $Q$, and $R$ in the cost function $\mathcal{J}_{min}$  are chosen for numerical simulation, with each weight set to 1.5.\\

We proceed to numerically simulate the populations of $c_1$ ,$c_2$, S, I, and B, along with cytokines levels, employing single, double, and triple control interventions of MDT for 30 days.\\

 \begin{table}[ht!]   
 \centering 
\begin{tabular}{|c|c|c|} 
\hline

\textbf{Symbols} &  \textbf{Values} & \textbf{Units} \\  

\hline 
$V_1$ & 25 \cite{Iliadis2000Optimizing}& $litr $ \\

\hline
$k_{12}$ & 0.4 \cite{Iliadis2000Optimizing} & $
day^{-1} $  \\

\hline
$k_1$& 1.6 \cite{Iliadis2000Optimizing}  & $day^{-1}$ \\

\hline
$V_2$ & 20 \cite{Iliadis2000Optimizing} & $litr$  \\

\hline
$k_2$ & 0.8 \cite{Iliadis2000Optimizing} & $day^{-1}$ \\

\hline 
$\mu_{d_1}$ & 1 table:\ref{hazard} & $day^{-1}conc^{-1}$  \\

\hline
$\mu_{d_2}$ & 3.81, table:\ref{hazard} & $day^{-1}conc^{-1}$  \\

\hline
$\mu_{d_3}$ & 7.1 table:\ref{hazard} & $day^{-1}conc^{-1}$  \\

\hline
$\eta$ & 0.01* & $dimensionless$ \\

\hline 

$k_{d_1}$ & 0.26 table:\ref{hazard} & $day^{-1} conc^{-1}$ \\

\hline 
$k_{d_2}$ & 0.99 table:\ref{hazard} & $day^{-1} conc^{-1}$ \\

\hline 
$k_{d_3}$ & 1.85 table:\ref{hazard} & $day^{-1} conc^{-1}$ \\

\hline 
$\omega$ & 0.0220 \cite{kim2017schwann}& $pg.ml^{-1}.day^{-1} $ \\

\hline
$\beta$ & 3.4400 \cite{jin2017formation} & $pg.ml^{-1}.day^{-1} $  \\

\hline
$\gamma$& 0.1795 \cite{oliveira2005cytokines}  & $day^{-1}$ \\

\hline
$\mu_{1}$ & 0.0018 \cite{oliveira2005cytokines} & $day^{-1}$  \\

\hline
$\delta$ & 0.2681 \cite{oliveira2005cytokines} & $day^{-1}$ \\

\hline
$\alpha$ & 0.0630 \cite{levy2006mouse} & $pg.ml^{-1}.day^{-1}$ \\

\hline
$y$ & $0.0003$ \cite{ghosh2021mathematical} & $day^{-1}$ \\
\hline
$\mu_{2}$ & 0.5700 \cite{international2020international}& $day^{-1}$ \\
\hline
$\alpha_{I_\gamma}$ & 0.0003 \cite{pagalay2014mathematical}& $pg.ml^{-1}.day^{-1} $ \\
\hline
$\delta^{I_{\gamma}}_{T_{\alpha}}$ &0.005540* & $pg.ml^{-1}$  \\

\hline

$\delta^{I_{\gamma}}_{I_{12}}$ & 0.009030* & $pg.ml^{-1}$ \\

\hline

$\delta^{I_{\gamma}}_{I_{15}}$ & 0.006250* & $pg.ml^{-1}$ \\

\hline

$\delta^{I_{\gamma}}_{I_{17}}$ & 0.004990*& $pg.ml^{-1}$ \\

\hline

$\mu_{I_{\gamma}}$ & 2.1600 \cite{pagalay2014mathematical} & $day^{-1} $\\
\hline

$\beta_{T_{\alpha}}$ & 0.0040 \cite{pagalay2014mathematical}&  $pg.ml^{-1}.day^{-1} $ \\
\hline
$\mu_{T_{\alpha}}$ & 1.1120 \cite{pagalay2014mathematical}  & $day^{-1}$  \\
\hline
$\alpha_{I_{10}}$ & 0.0440 \cite{liao2013role} & $pg.ml^{-1}.day^{-1} $  \\
\hline

$\delta^{I_{10}}_{I_{\gamma}}$ & 0.001460*& $pg.ml^{-1}$\\
\hline

$\mu_{I_{10}}$ & 16.000 \cite{liao2013role} & $day^{-1}$ \\
\hline

$\beta_{I_{12}}$ &0.0110 \cite{liao2013role}  & $pg.ml^{-1}.day^{-1}$\\
\hline
$\mu_{I_{12}}$ & 1.8800 \cite{pagalay2014mathematical} & $day^{-1}$ \\
\hline
$\beta_{I_{15}}$ &0.0250 \cite{su2009mathematical} & $pg.ml^{-1}.day^{-1} $   \\
\hline
$\mu_{I_{15}}$ & 2.1600 \cite{su2009mathematical} & $day^{-1}$\\
\hline
$\beta_{I_{17}}$ & 0.0290 \cite{su2009mathematical} & $pg.ml^{-1}.day^{-1} $  \\
\hline
$\mu_{I_{17}}$ & 2.3400 \cite{su2009mathematical}& $day^{-1}$\\
\hline
$Q_{I_{\gamma}}$ & 0.1000 \cite{talaei2021mathematical} & Relative concentration\\
\hline
$Q_{T_{\alpha}}$ & 0.1400 \cite{brady2016personalized}  & Relative concentration \\
\hline 
$Q_{I_{10}}$ & 0.1500 \cite{talaei2021mathematical} &  Relative concentration\\
\hline 
$Q_{I_{12}}$ &1.1100 \cite{brady2016personalized}  & Relative concentration \\
\hline 
$Q_{I_{15}}$ & 0.2000 \cite{brady2016personalized} & Relative concentration  \\
\hline 
$Q_{I_{17}}$ & 0.3170 \cite{brady2016personalized} & Relative concentration \\
\hline 
\end{tabular}
\caption{The parameter values have been compiled from clinical literature, with (*) indicating assumed values for certain parameters.}
\label{parameter1}
\end{table} 

\begin{table}[ht!]
   \centering
    \begin{tabular}{|c|c|c|}
    \hline
        \textbf{Drugs} & \textbf{Hazard ratio} & \textbf{Source} \\  
        \hline
Rifampin & 0.26   & \cite{bakker2005prevention} \\
\hline
Dapsone & 0.99 & \cite{cerqueira2021influence} \\
\hline
Clofazimine & 1.85  & \cite{cerqueira2021influence} \\
\hline
\end{tabular}
\caption{The hazard ratio associated with the drugs}
\label{hazard}
\end{table}

\newpage

\subsection{Findings} \label{sec4c}
In this section, we analyze the results from the simulations described earlier. Figures \ref{fig:Rifampin} - \ref{fig:mdt} depict the dynamics of the $S$, $I$, $B$, $I_{\gamma}$, $T_{\alpha}$, $I_{10}$, $I_{12}$, $I_{15}$, and $I_{17}$ compartments in our model (\ref{sec2equ3})–(\ref{sec2equ11}) under different drug administration scenarios for 30 days. Each panel represents a compartment and compares its dynamics with and without drug intervention. \\

Figure \ref{fig:Rifampin} depicts the dynamics under rifampin administration, while figures \ref{fig:Dapsone} and \ref{fig:Clofazimine} show the dynamics under dapsone and clofazimine administration, respectively. Average and 30th-day values of each compartment without control and with single drug intervention such as rifampin, dapsone, and clofazimine are presented in tables \ref{tab:avg_1d} and \ref{tab:last_1d} respectively. \\

In all cases of single drug intervention, such as with drugs rifampin, dapsone and clofazimine, we observe from figures \ref{fig:Rifampin}, \ref{fig:Dapsone}, and \ref{fig:Clofazimine} that the compartments susceptible cells $S(t)$, infected cells $I(t)$, and bacterial load $B(t)$, as well as for IFN-$\gamma$ ($I_{\gamma}(t)$), TNF-${\alpha}$ ($T_{\alpha}(t)$), IL-10 ($I_{10}(t)$), and IL-12 ($I_{12}(t)$), all show a decreasing trend compared to the scenario without drug intervention. Conversely, compartments IL-15 ($I_{15}(t)$) and IL-17 ($I_{17}(t)$), show an increasing trend. \\

The most significant reduction in susceptible cells is observed with dapsone, while the least reduction is observed with rifampin as a single drug intervention. Figure results for infected cells show consistency across all single drug interventions, but differences are noticeable in the values presented in tables \ref{tab:avg_1d} and \ref{tab:last_1d}. These tables indicate a decrease in infected cells, consistent with the trend observed for susceptible cells. \\

However, there is no discernible difference in the compartments bacterial load $B(t)$, IFN-$\gamma$ ($I_{\gamma}(t)$), TNF-${\alpha}$ ($T_{\alpha}(t)$), IL-10 ($I_{10}(t)$), IL-12 ($I_{12}(t)$), IL-15 ($I_{15}(t)$), and IL-17  ($I_{17}(t)$) when comparing the effects of all single drug interventions, as shown in both the figures \ref{fig:Rifampin} - \ref{fig:Clofazimine} and tables \ref{tab:avg_1d}, \ref{tab:last_1d}.\\

Figures \ref{fig:Rif&Dap}, \ref{fig:Clo&Dap} and \ref{fig:Rif&Clo} illustrate the dynamics under combinations of drugs: rifampin \& dapsone, clofazimin \& dapsone and rifampin \& clofazimine respectively. Average and 30th-day values of each compartment without control and with a two-drug combined intervention such as rifampin \& dapsone, clofazimine \& dapsone and rifampin \& clofazimine are presented in tables \ref{tab:avg_2d} and \ref{tab:last_2d} respectively. \\

In all cases of two-drug combination intervention, such as with combinations of rifampin and dapsone, dapsone and clofazimine, and rifampin and clofazimine, we observe from figures \ref{fig:Rif&Dap}, \ref{fig:Clo&Dap}, and \ref{fig:Rif&Clo} that the compartments exhibit trends similar to single drug intervention. \\

The most significant reduction in susceptible cells is observed with a combination of dapsone and clofazimine, while the least reduction is observed with a combination of rifampin and clofazimine as two-drug combined intervention. Results for infected cells show consistency across all combined two-drug interventions, but differences are noticeable in the values presented in tables \ref{tab:avg_2d} and \ref{tab:last_2d}. These tables indicate a decrease in infected cells, consistent with the trend observed for susceptible cells. \\

However, there is no discernible difference in the compartments bacterial load $B(t)$, $I_{\gamma}(t)$,  $T_{\alpha}(t)$,  $I_{10}(t)$,  $I_{12}(t),$  $I_{15}(t)$, and  $I_{17}(t)$ when comparing the effects of all combined two-drug interventions, as shown in both figures \ref{fig:Rif&Dap} - \ref{fig:Rif&Clo} and table \ref{tab:avg_2d}. However, there is a very slight increment in the compartment  $I_{\gamma}(t)$, and decrement in the compartments $T_{\alpha}(t)$,  $I_{10}(t)$, and  $I_{12}(t)$ with dapsone and clofazimine when comparing the effects of all combined two-drug interventions, as shown in table \ref{tab:last_2d}.\\

Figure \ref{fig:mdt} illustrates the dynamics under the administration of MDT drugs, comprising rifampin, clofazimine, and dapsone. Average and 30th-day values of each compartment without control and with MDT drug intervention such as rifampin, dapsone, and clofazimine are presented in table \ref{tab:Avg&last_3d}. \\

In case of MDT drugs (rifampin, clofazimine, and dapsone) intervention we observe from figure \ref{fig:mdt} that the compartments susceptible cells $S(t)$, infected cells $I(t)$, and bacterial load $B(t)$, as well as for $I_{\gamma}(t)$,  $T_{\alpha}(t)$,  $I_{10}(t)$, and  $I_{12}(t)$, show a decreasing trend compared to the scenario without drug intervention. Conversely, compartments $I_{15}(t)$, and $I_{17}(t)$, show an increasing trend.\\

Optimal drug values for individual drug administration, combination of two drugs, and MDT drug administration are presented in tables  \ref{tab:result1}, \ref{tab:result2}, and \ref{tab:result3} respectively. \\

Our findings indicate that current MDT drugs dosage for leprosy, as prescribed by physicians \cite{maymone2020leprosy}, are optimal according to our model. \\

\begin{table}[ht!]
   \centering
    \begin{tabular}{|c|c|c|c|}
    \hline
        \textbf{Single Drug} & \textbf{Dosage monthly(in mg)} & \textbf{Initial drug dosage(in mg)} & \textbf{Optimal drug dosage(in mg)}\\  
        \hline
Rifampin & 600   & 20  & 19.999 \\
\hline
Dapsone & 3000  & 100 & 100.01 \\
\hline
Clofazimine & 300  & 10  & 10.001 \\
\hline
\end{tabular}
\caption{Dosage levels for individual drug administration for 30-days}
\label{tab:result1}
\end{table}

\begin{table}[ht!]
   \centering
    \begin{tabular}{|c|c|c|c|}
    \hline
       \multirow{2}{*}{\textbf{Two Drugs}} & \multirow{2}{*}{\textbf{Monthly drug Dosage(in mg)}} & \multicolumn{2}{c|}{\textbf{Combined dosage of two drugs(in mg)}} \\
        \cline{3-4}
         & & \textbf{Initial}  & \textbf{Optimal} \\
        \hline
         Dapsone and Clofazimine & 3000 + 300 & 100, 10  & 100.01, 9.999 \\
         \hline
          Rifampin and Clofazimine & 600 + 300 & 20, 10 & 19.999, 9.999 \\
         \hline
          Rifampin and Dapsone & 600 + 3000 & 20, 100  & 20.002, 99.999 \\
         \hline
    \end{tabular}
    \caption{Dosage levels for combination of two drugs administration for 30-days}
    \label{tab:result2}
\end{table}

\begin{table}[ht!]
   \centering
    \begin{tabular}{|c|c|c|c|}
    \hline 
    \multirow{2}{*}{\textbf{Three Drugs}} & \multirow{2}{*}{\textbf{Monthly drug Dosage(in mg)}} & \multicolumn{2}{c|}{\textbf{ MDT drug dosage (in mg)}} \\
        \cline{3-4}
         & & \textbf{Initial}  & \textbf{Optimal} \\
        \hline
Rifampin, Dapsone, Clofazimine & 600+3000+300 & 20, 100, 10  & 20, 100, 10.001 \\
\hline
\end{tabular}
\caption{Dosage levels for the administration of all three drugs in MDT for 30-days}
\label{tab:result3}
\end{table}

\begin{figure}[htbp]
    \centering
    \begin{subfigure}{0.30\textwidth}
        \includegraphics[width=\textwidth]{S_r.png}
        \caption{Graph 1}
        \label{fig:graph1}
    \end{subfigure}
    \hfill
    \begin{subfigure}{0.30\textwidth}
        \includegraphics[width=\textwidth]{I_r.png}
        \caption{Graph 2}
        \label{fig:graph2}
    \end{subfigure}
    \hfill
    \begin{subfigure}{0.30\textwidth}
        \includegraphics[width=\textwidth]{B_r.png}
        \caption{Graph 3}
        \label{fig:graph3}
    \end{subfigure}
    \hfill
    \begin{subfigure}{0.30\textwidth}
        \includegraphics[width=\textwidth]{IFNg_r.png}
        \caption{Graph 4}
        \label{fig:graph4}
    \end{subfigure}
    \hfill
    \begin{subfigure}{0.30\textwidth}
        \includegraphics[width=\textwidth]{TNFa_r.png}
        \caption{Graph 5}
        \label{fig:graph5}
    \end{subfigure}
    \hfill
    \begin{subfigure}{0.30\textwidth}
        \includegraphics[width=\textwidth]{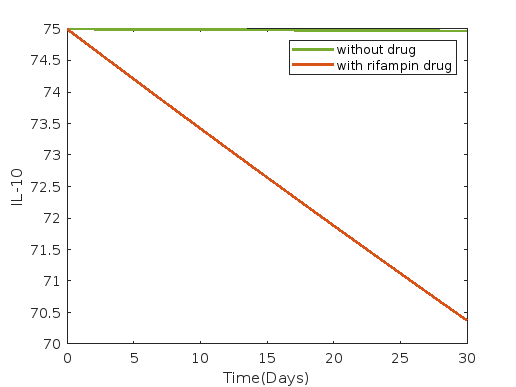}
        \caption{Graph 6}
        \label{fig:graph6}
    \end{subfigure}
    \hfill
    \begin{subfigure}{0.30\textwidth}
        \includegraphics[width=\textwidth]{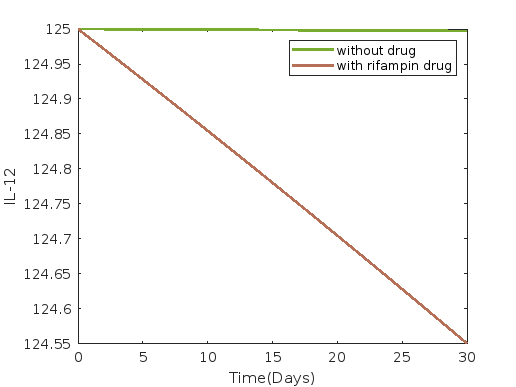}
        \caption{Graph 7}
        \label{fig:graph7}
    \end{subfigure}
    \hfill
    \begin{subfigure}{0.30\textwidth}
        \includegraphics[width=\textwidth]{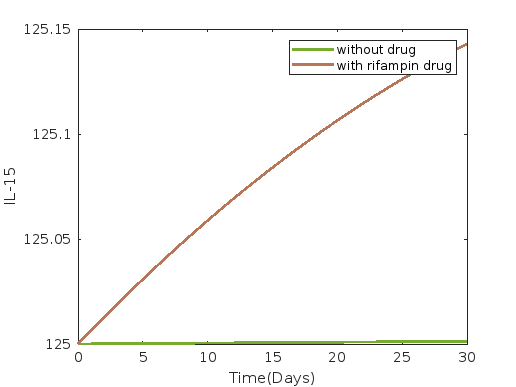}
        \caption{Graph 8}
        \label{fig:graph8}
    \end{subfigure}
    \hfill
    \begin{subfigure}{0.30\textwidth}
        \includegraphics[width=\textwidth]{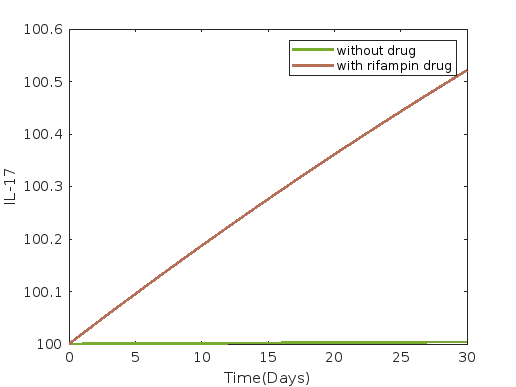}
        \caption{Graph 9}
        \label{fig:graph9}
    \end{subfigure}
    \caption{Plots depicting the influence of rifampin drug for one month  }
    \label{fig:Rifampin}
\end{figure}

\begin{figure}[htbp]
    \centering
    \begin{subfigure}{0.30\textwidth}
        \includegraphics[width=\textwidth]{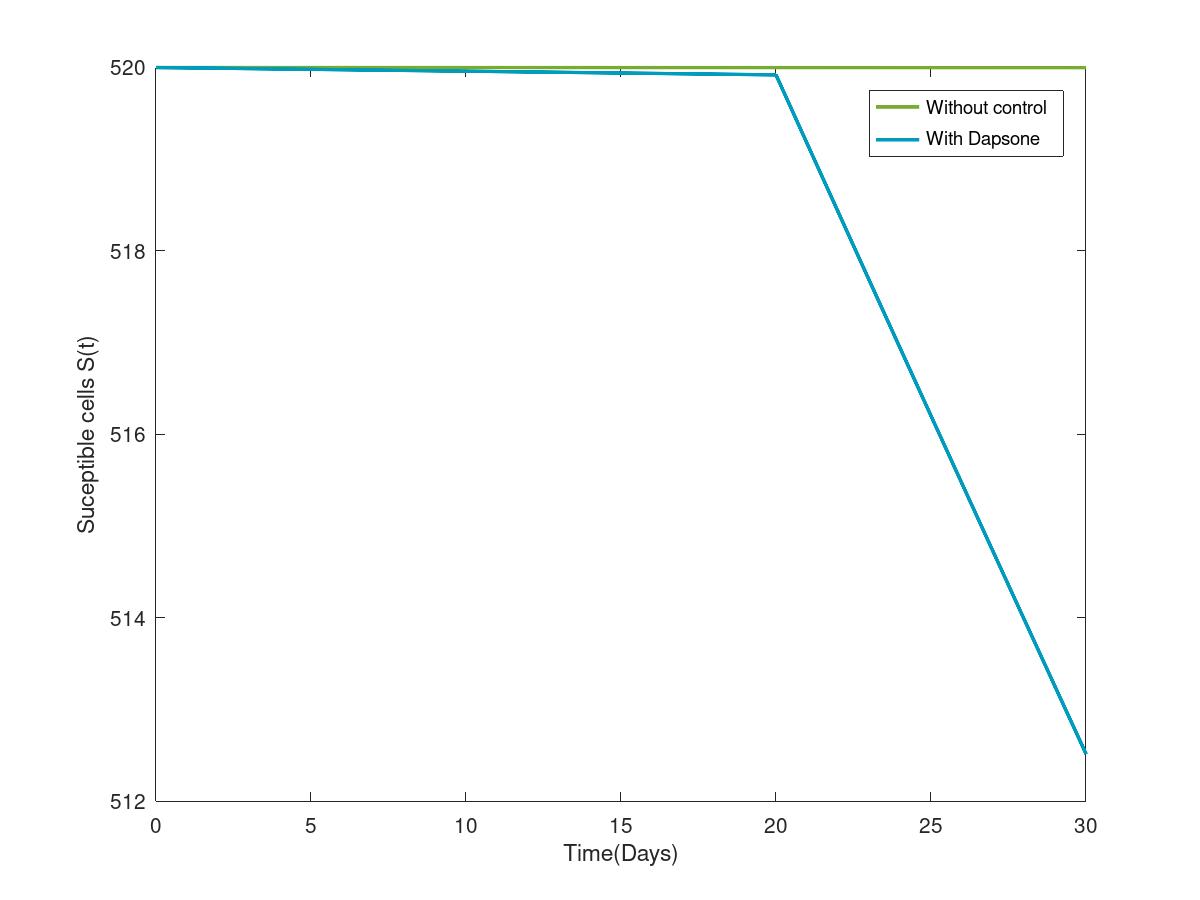}
        \caption{Graph 1}
        \label{fig:graph10}
    \end{subfigure}
    \hfill
    \begin{subfigure}{0.30\textwidth}
        \includegraphics[width=\textwidth]{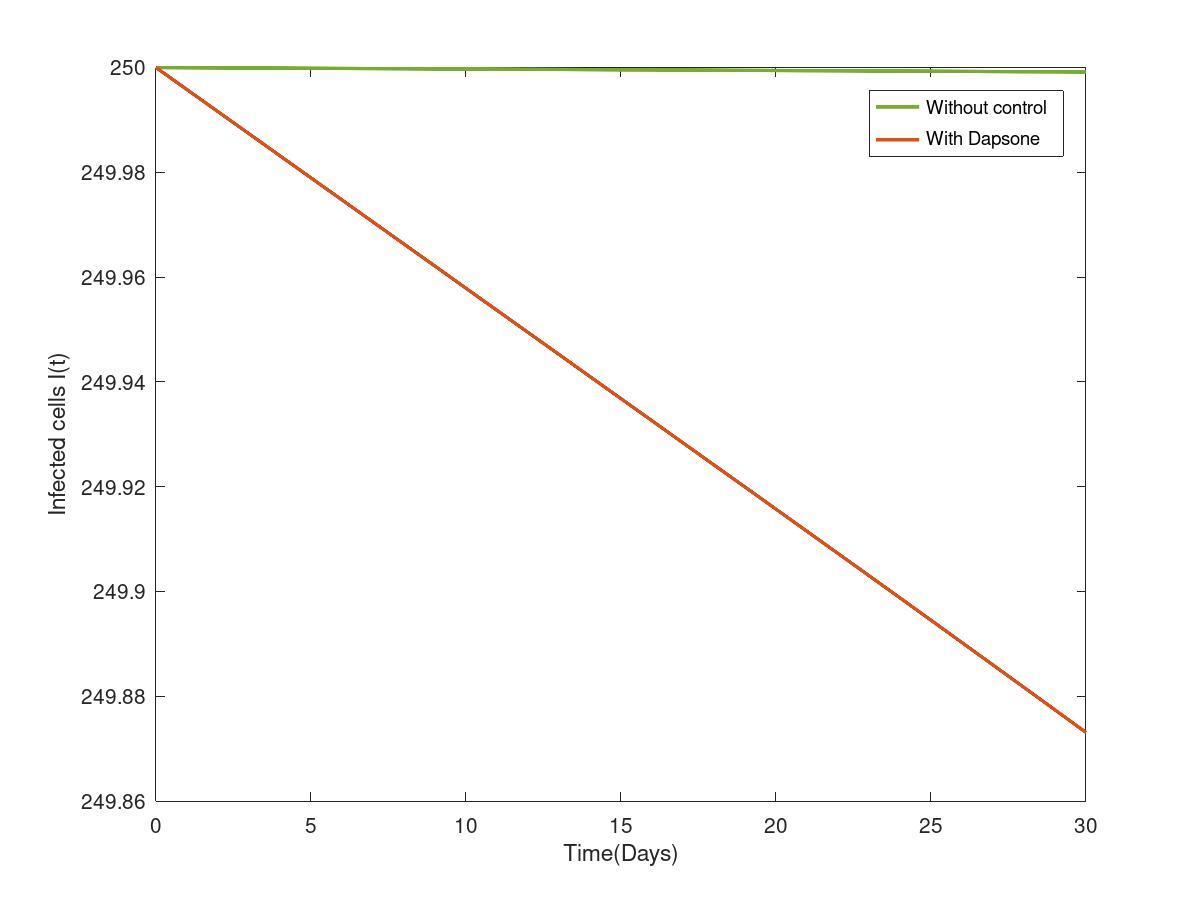}
        \caption{Graph 2}
         \label{fig:graph11}
    \end{subfigure}
    \hfill
    \begin{subfigure}{0.30\textwidth}
        \includegraphics[width=\textwidth]{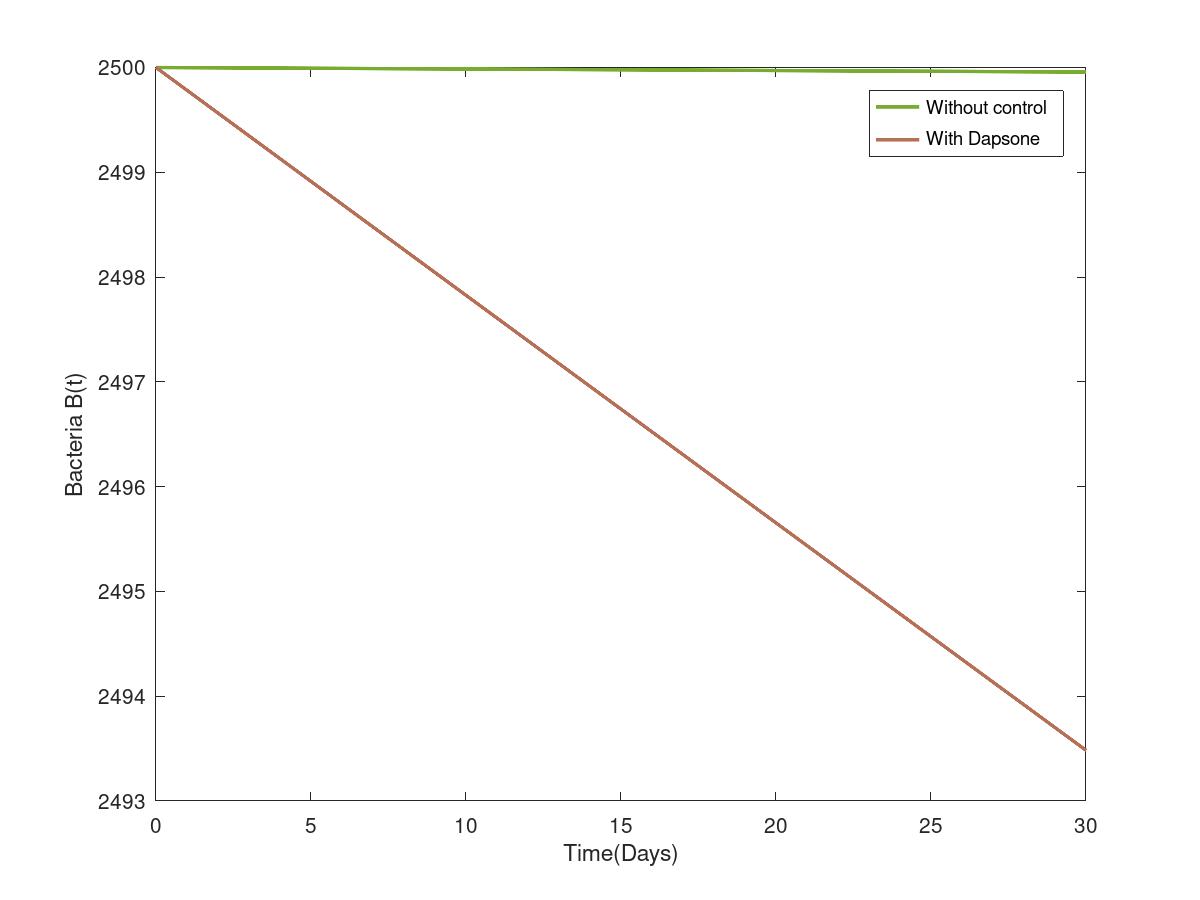}
        \caption{Graph 3}
         \label{fig:graph12}
    \end{subfigure}
    \hfill
    \begin{subfigure}{0.30\textwidth}
        \includegraphics[width=\textwidth]{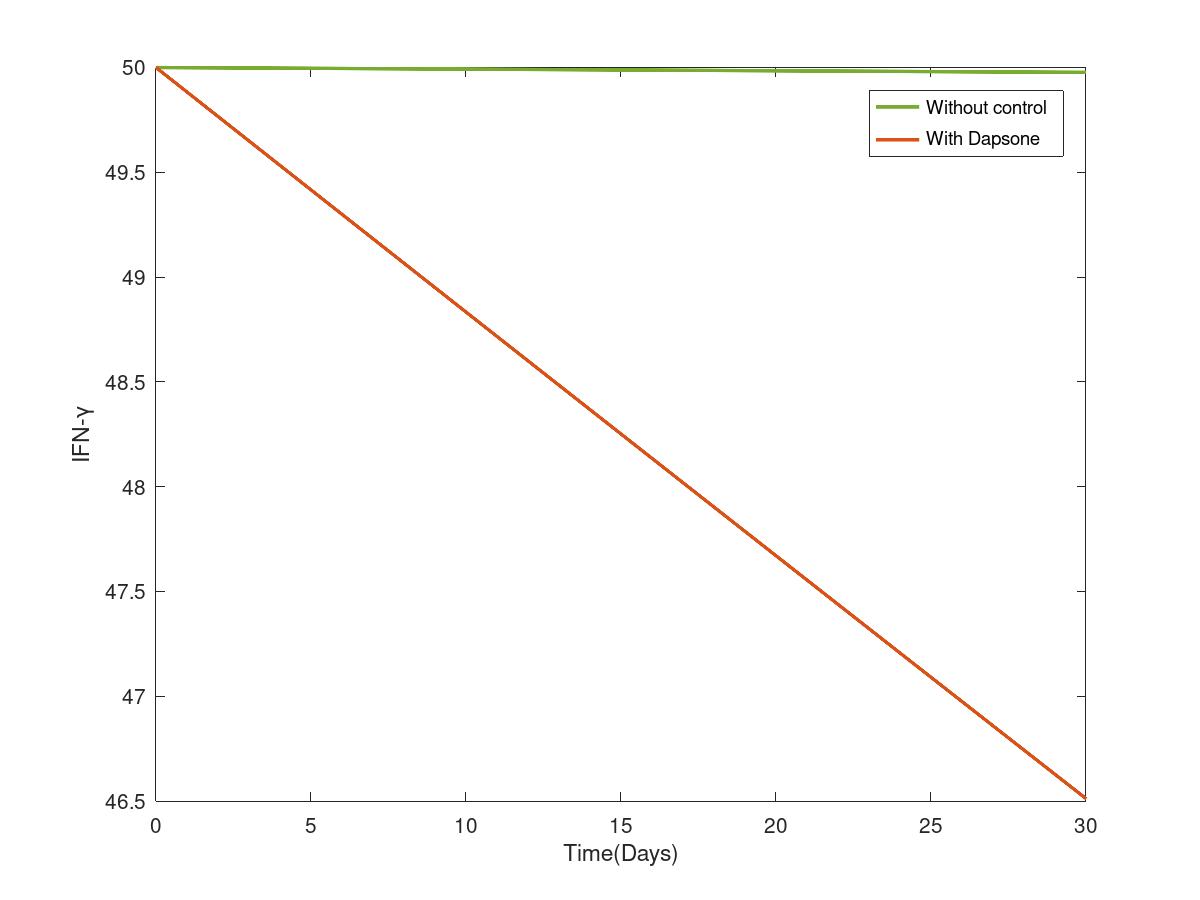}
        \caption{Graph 4}
         \label{fig:graph13}
    \end{subfigure}
    \hfill
    \begin{subfigure}{0.30\textwidth}
        \includegraphics[width=\textwidth]{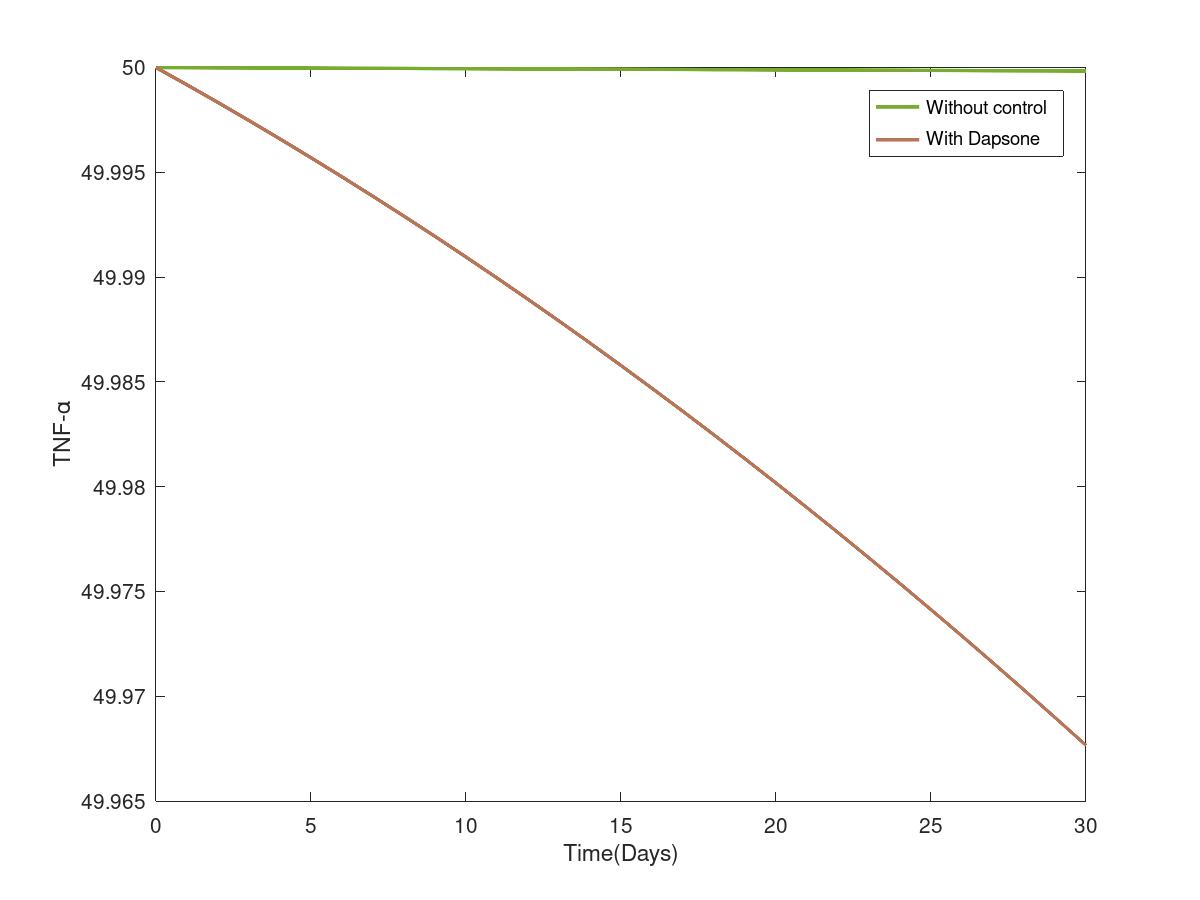}
        \caption{Graph 5}
         \label{fig:graph14}
    \end{subfigure}
    \hfill
    \begin{subfigure}{0.30\textwidth}
        \includegraphics[width=\textwidth]{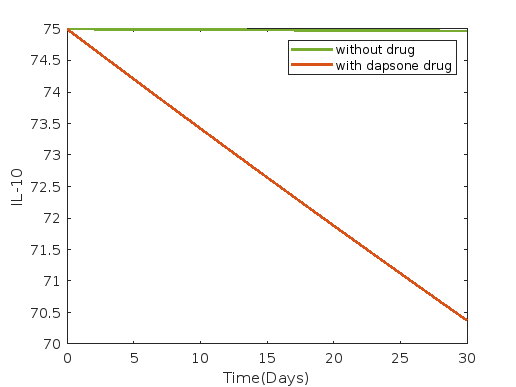}
        \caption{Graph 6}
         \label{fig:graph15}
    \end{subfigure}
    \hfill
    \begin{subfigure}{0.30\textwidth}
        \includegraphics[width=\textwidth]{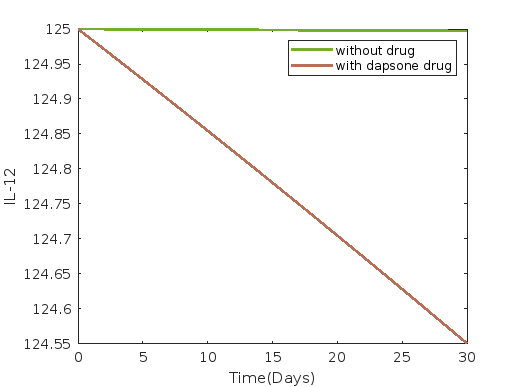}
        \caption{Graph 7}
         \label{fig:graph16}
    \end{subfigure}
    \hfill
    \begin{subfigure}{0.30\textwidth}
        \includegraphics[width=\textwidth]{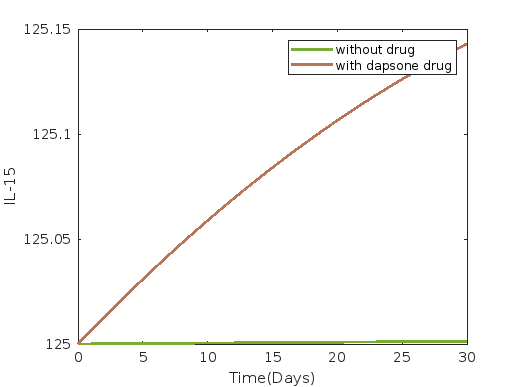}
        \caption{Graph 8}
         \label{fig:graph17}
    \end{subfigure}
    \hfill
    \begin{subfigure}{0.30\textwidth}
        \includegraphics[width=\textwidth]{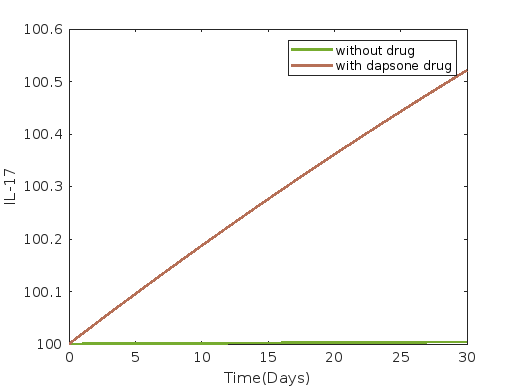}
        \caption{Graph 9}
         \label{fig:graph18}
    \end{subfigure}
    \caption{Plots depicting the influence of dapsone drug for one month  }
    \label{fig:Dapsone}
\end{figure}

\begin{figure}[htbp]
    \centering
    \begin{subfigure}{0.30\textwidth}
        \includegraphics[width=\textwidth]{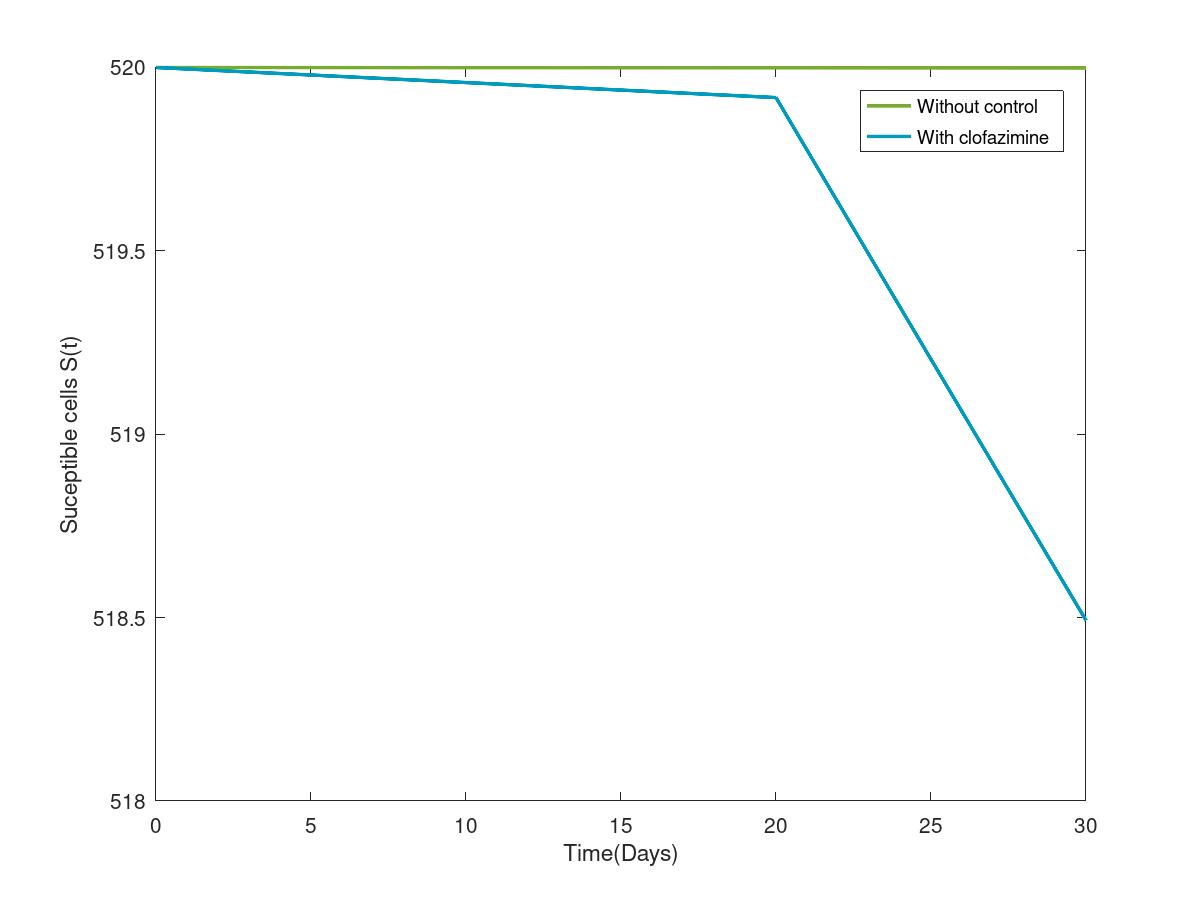}
        \caption{Graph 1}
        \label{fig:graph19}
    \end{subfigure}
    \hfill
    \begin{subfigure}{0.30\textwidth}
        \includegraphics[width=\textwidth]{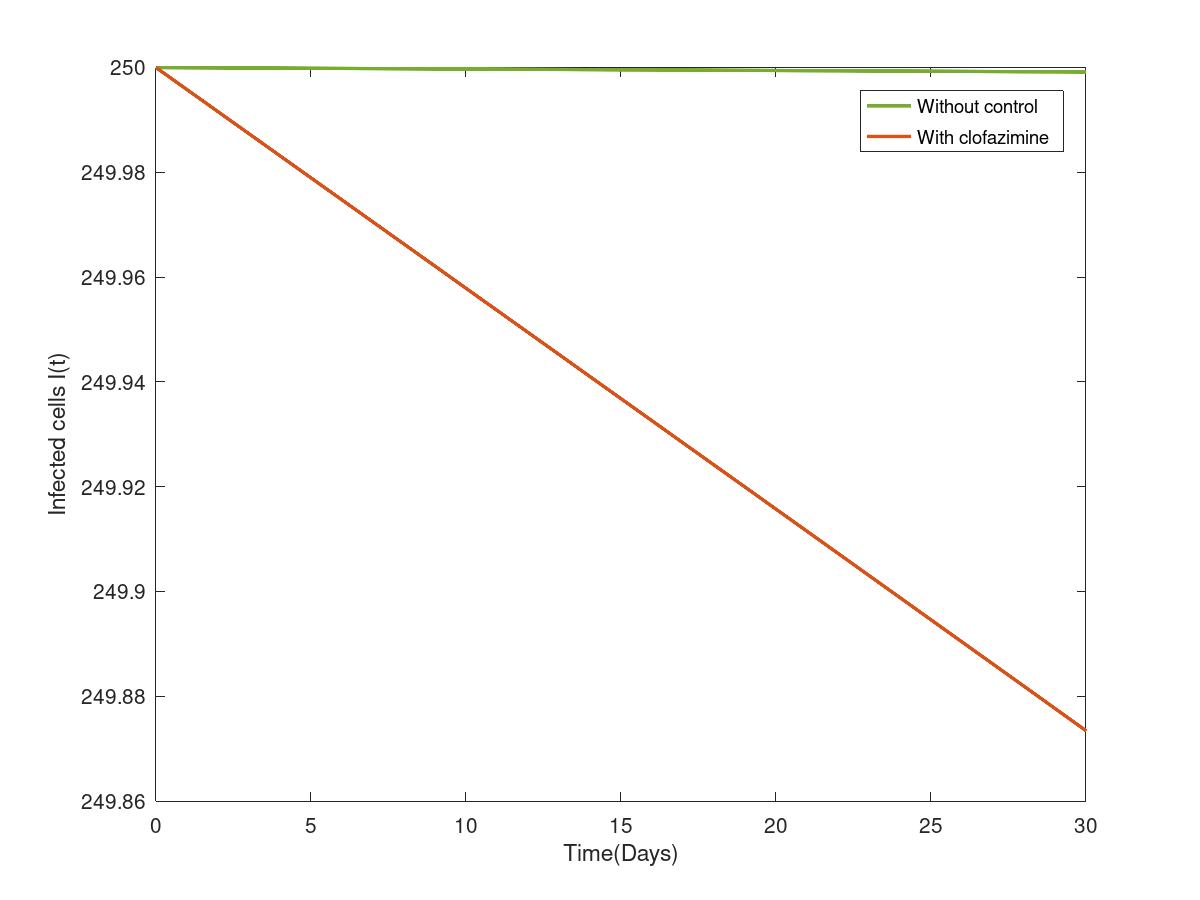}
        \caption{Graph 2}
        \label{fig:graph20}
    \end{subfigure}
    \hfill
    \begin{subfigure}{0.30\textwidth}
        \includegraphics[width=\textwidth]{B_c.png}
        \caption{Graph 3}
        \label{fig:graph21}
    \end{subfigure}
    \hfill
    \begin{subfigure}{0.30\textwidth}
        \includegraphics[width=\textwidth]{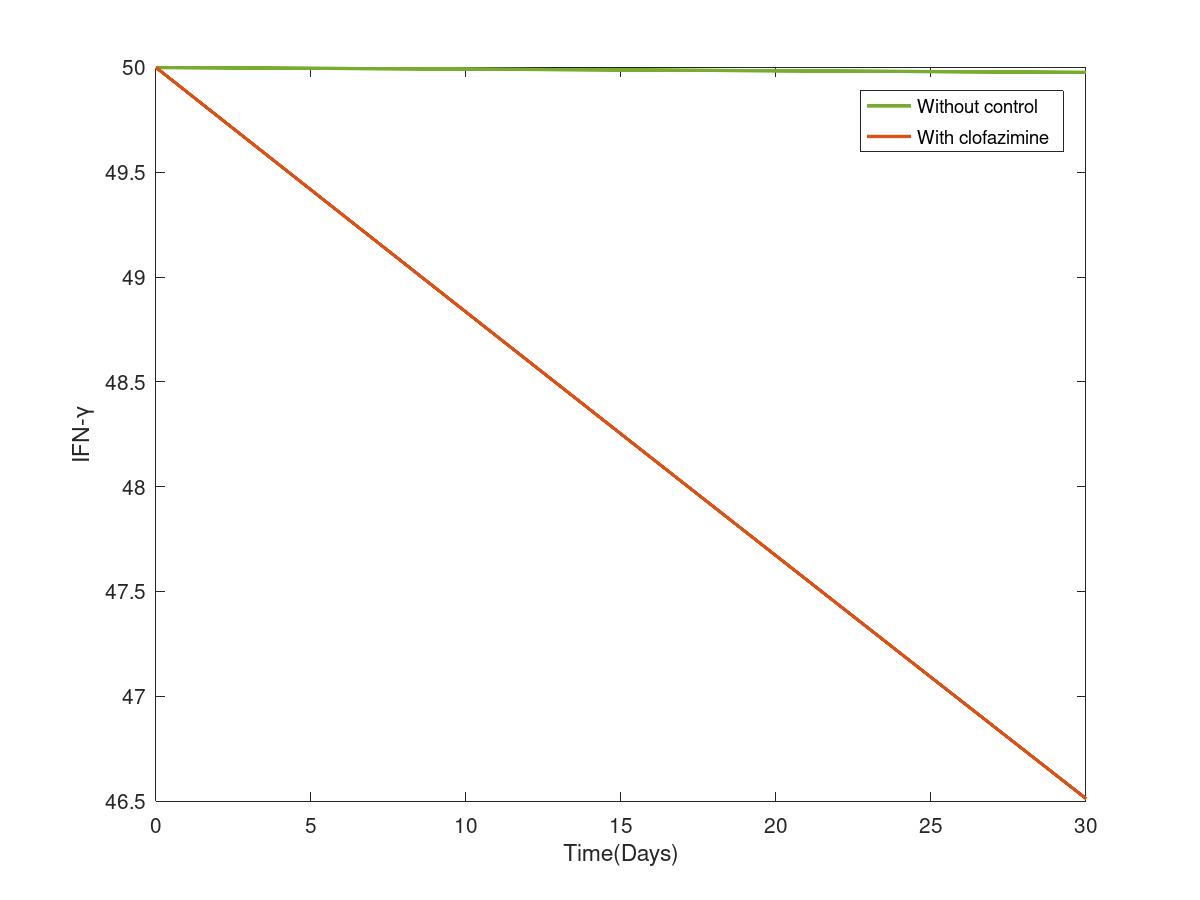}
        \caption{Graph 4}
        \label{fig:graph22}
    \end{subfigure}
    \hfill
    \begin{subfigure}{0.30\textwidth}
        \includegraphics[width=\textwidth]{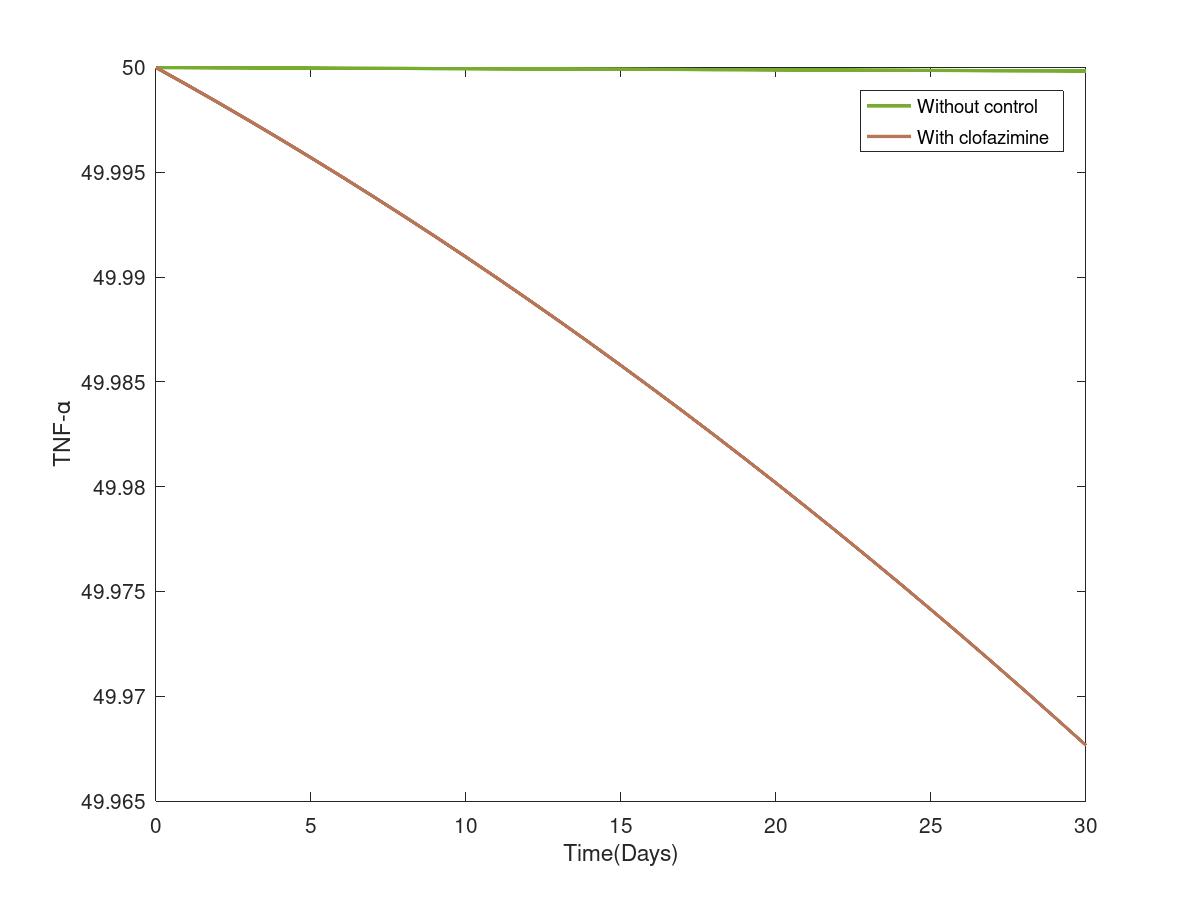}
        \caption{Graph 5}
        \label{fig:graph23}
    \end{subfigure}
    \hfill
    \begin{subfigure}{0.30\textwidth}
        \includegraphics[width=\textwidth]{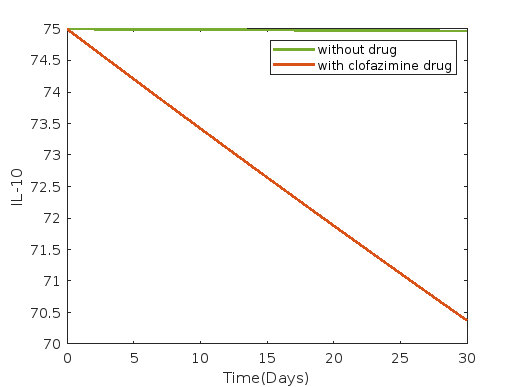}
        \caption{Graph 6}
        \label{fig:graph24}
    \end{subfigure}
    \hfill
    \begin{subfigure}{0.30\textwidth}
        \includegraphics[width=\textwidth]{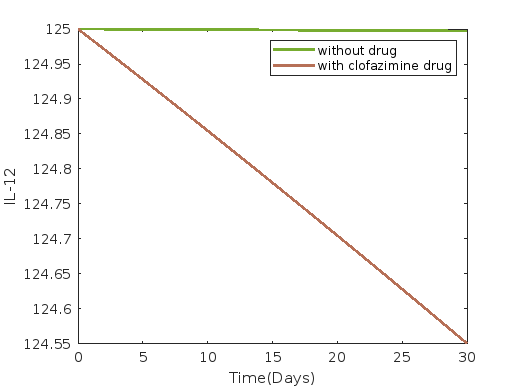}
        \caption{Graph 7}
        \label{fig:graph25}
    \end{subfigure}
    \hfill
    \begin{subfigure}{0.30\textwidth}
        \includegraphics[width=\textwidth]{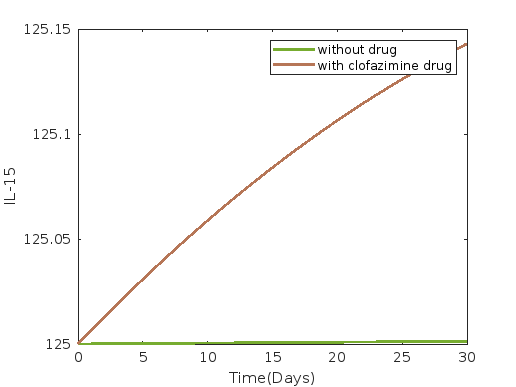}
        \caption{Graph 8}
        \label{fig:graph26}
    \end{subfigure}
    \hfill
    \begin{subfigure}{0.30\textwidth}
        \includegraphics[width=\textwidth]{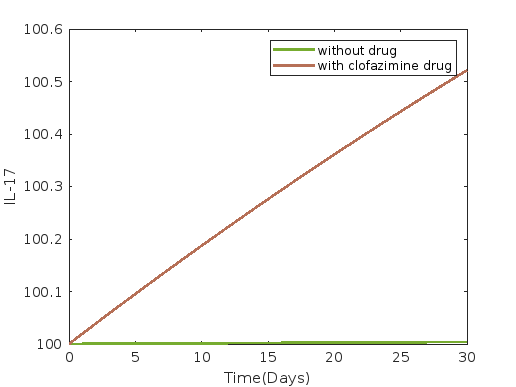}
        \caption{Graph 9}
        \label{fig:graph27}
    \end{subfigure}
    \caption{Plots depicting the influence of clofazimine drug for one month  }
    \label{fig:Clofazimine}
\end{figure}


\begin{figure}[htbp]
    \centering
    \begin{subfigure}{0.30\textwidth}
        \includegraphics[width=\textwidth]{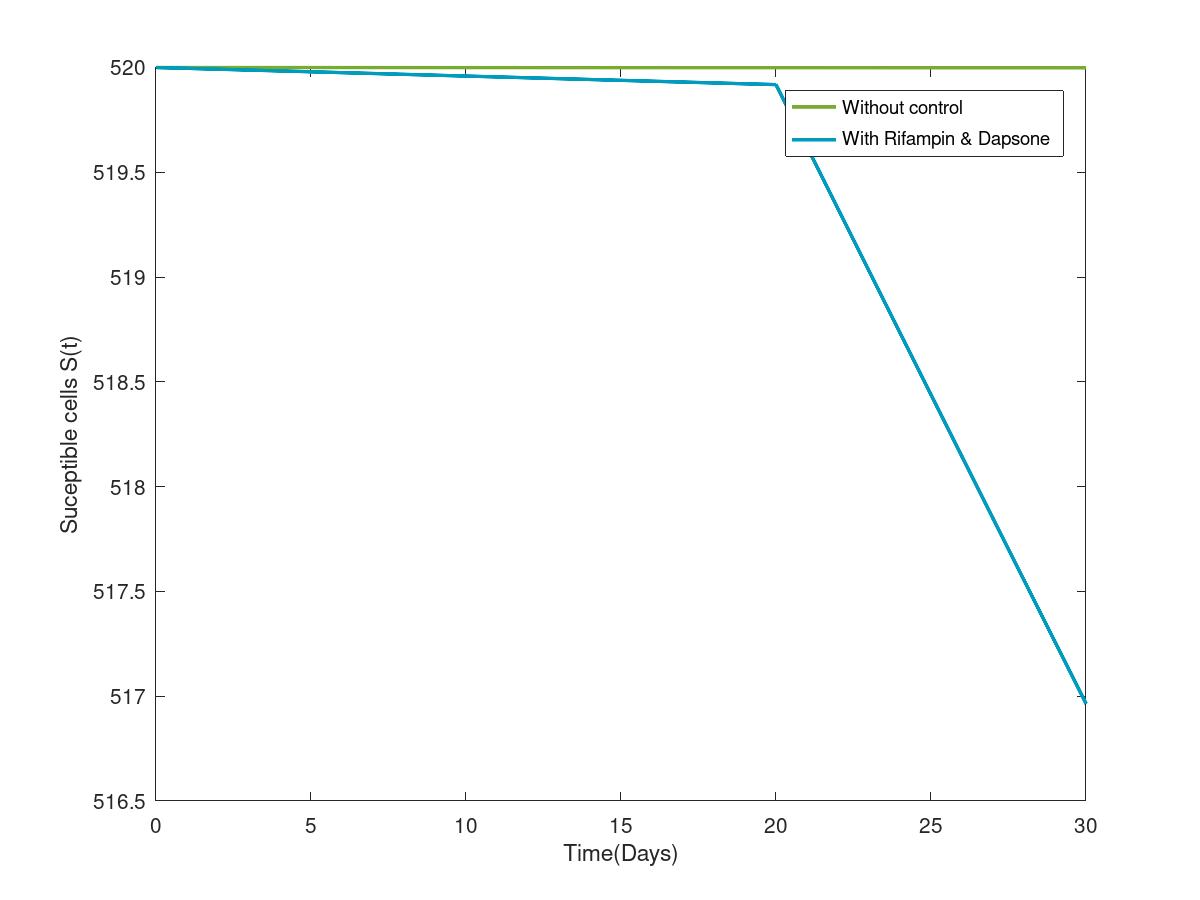}
        \caption{Graph 1}
        \label{fig:graph28}
    \end{subfigure}
    \hfill
    \begin{subfigure}{0.30\textwidth}
        \includegraphics[width=\textwidth]{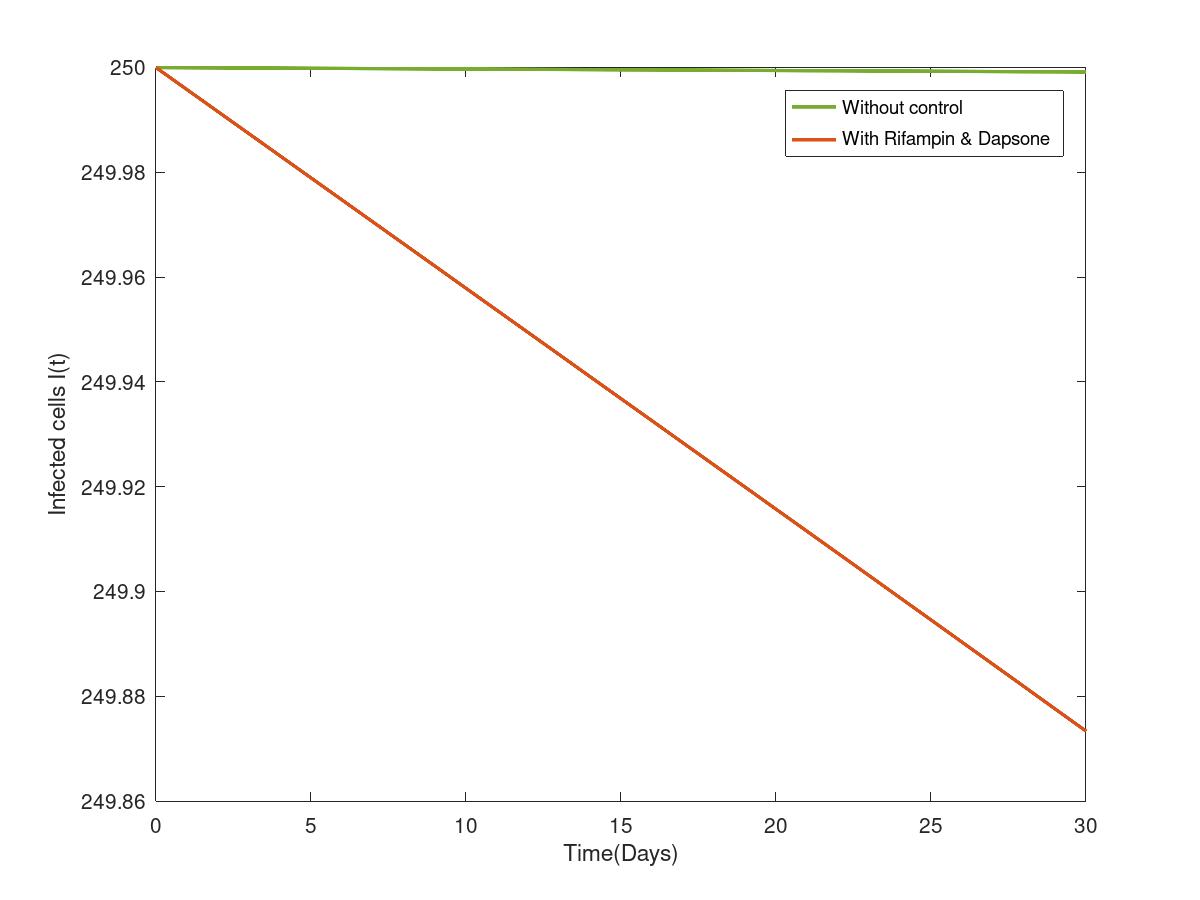}
        \caption{Graph 2}
        \label{fig:graph29}
    \end{subfigure}
    \hfill
    \begin{subfigure}{0.30\textwidth}
        \includegraphics[width=\textwidth]{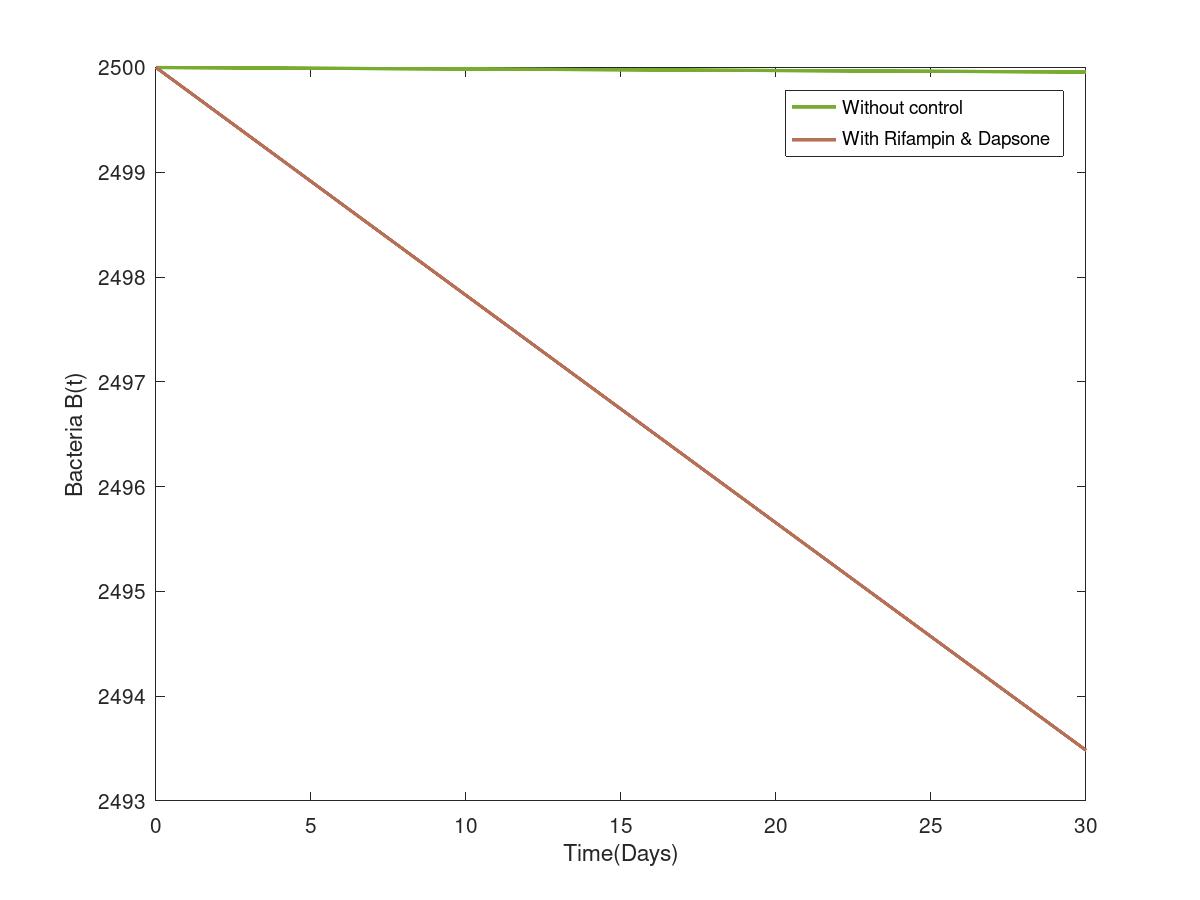}
        \caption{Graph 3}
         \label{fig:graph30}
    \end{subfigure}
    \hfill
    \begin{subfigure}{0.30\textwidth}
        \includegraphics[width=\textwidth]{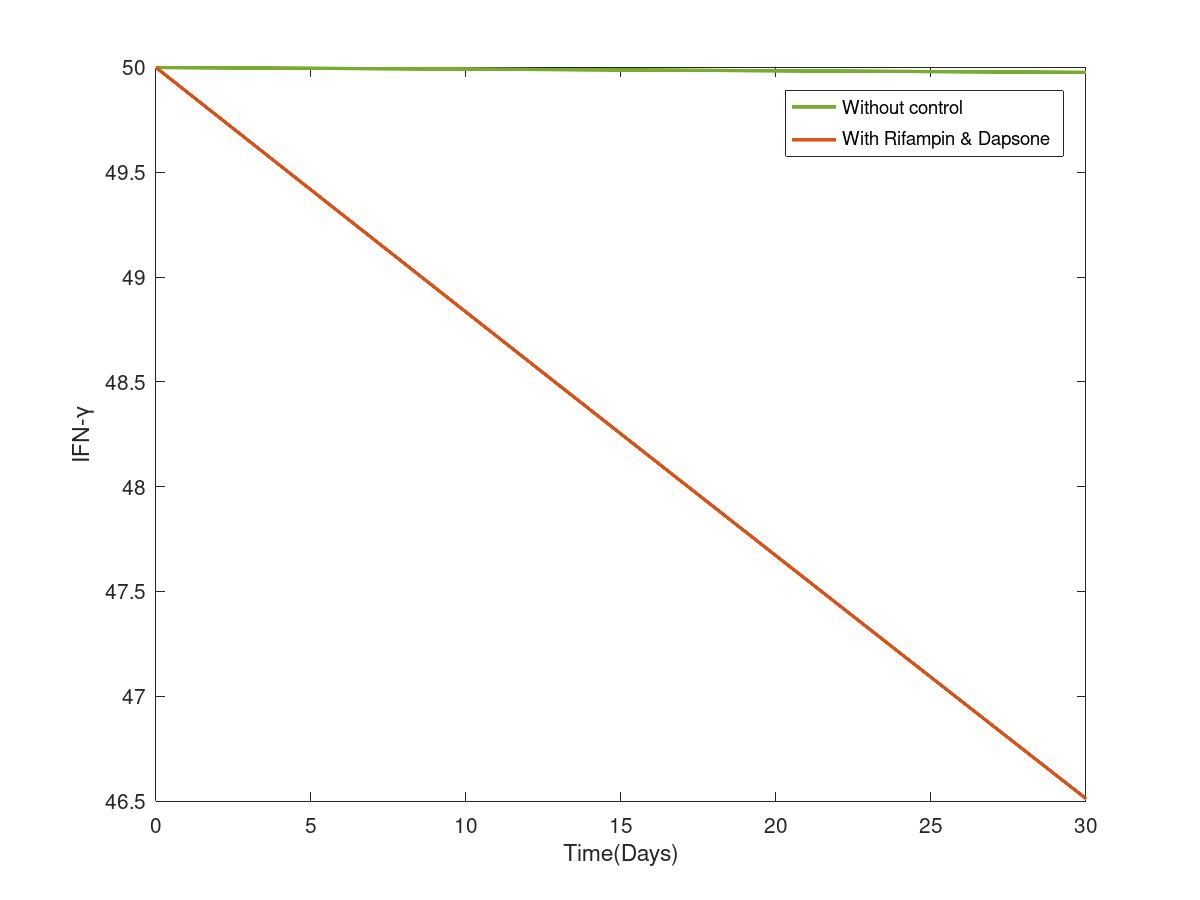}
        \caption{Graph 4}
        \label{fig:graph31}
    \end{subfigure}
    \hfill
    \begin{subfigure}{0.30\textwidth}
        \includegraphics[width=\textwidth]{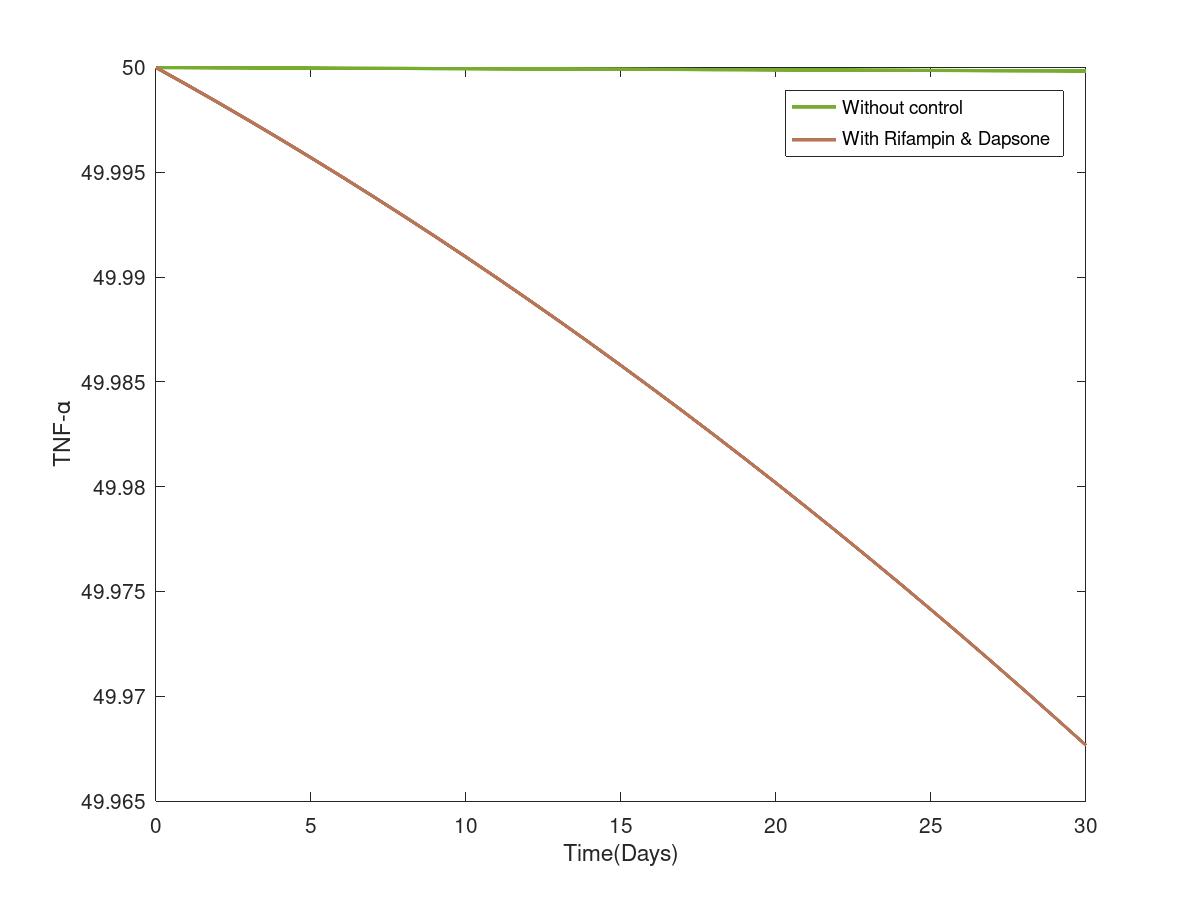}
        \caption{Graph 5}
         \label{fig:graph32}
    \end{subfigure}
    \hfill
    \begin{subfigure}{0.30\textwidth}
        \includegraphics[width=\textwidth]{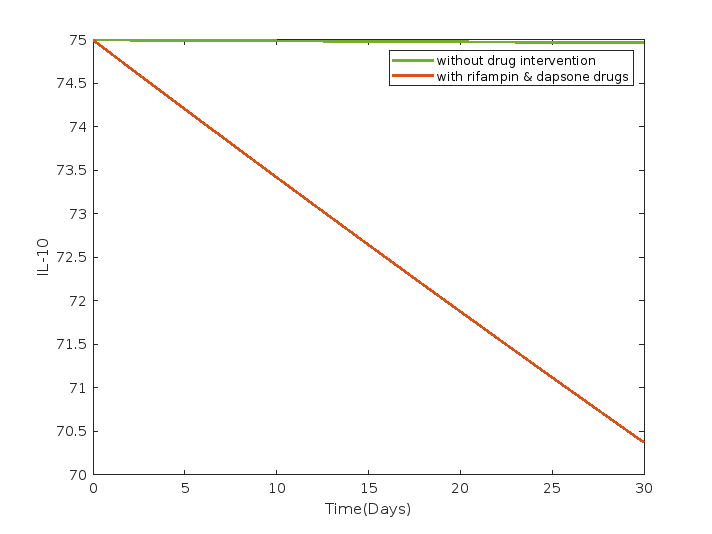}
        \caption{Graph 6}
         \label{fig:graph33}
    \end{subfigure}
    \hfill
    \begin{subfigure}{0.30\textwidth}
        \includegraphics[width=\textwidth]{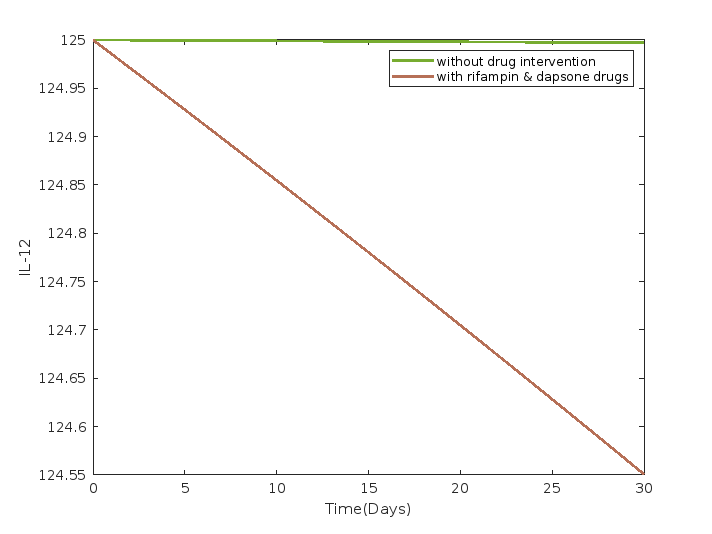}
        \caption{Graph 7}
         \label{fig:graph34}
    \end{subfigure}
    \hfill
    \begin{subfigure}{0.30\textwidth}
        \includegraphics[width=\textwidth]{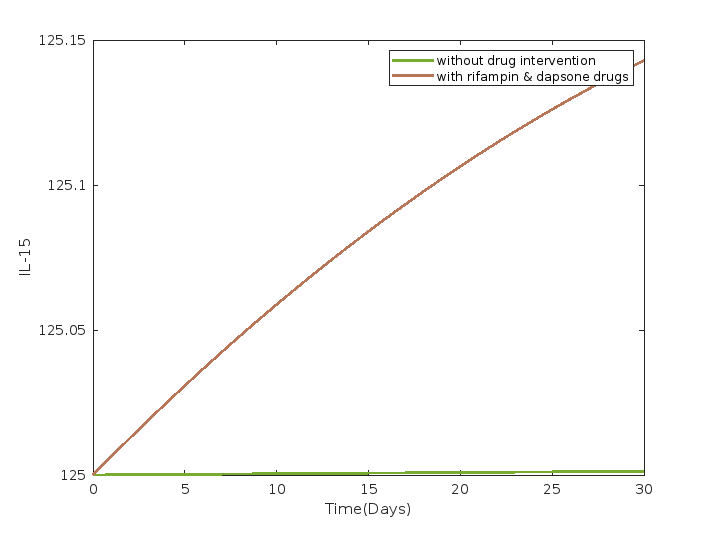}
        \caption{Graph 8}
         \label{fig:graph35}
    \end{subfigure}
    \hfill
    \begin{subfigure}{0.30\textwidth}
        \includegraphics[width=\textwidth]{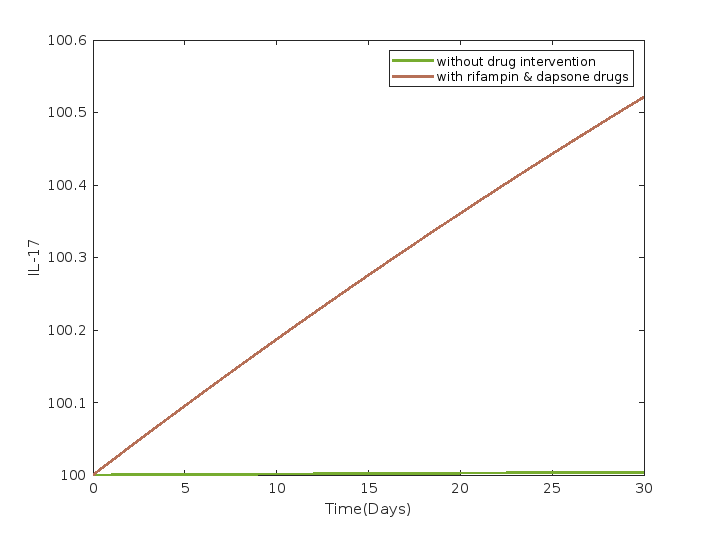}
        \caption{Graph 9}
         \label{fig:graph36}
    \end{subfigure}
    \caption{Plots depicting the influence of rifampin and dapsone drugs at a time for one month  }
    \label{fig:Rif&Dap}
\end{figure}

\begin{figure}[htbp]
    \centering
    \begin{subfigure}{0.30\textwidth}
        \includegraphics[width=\textwidth]{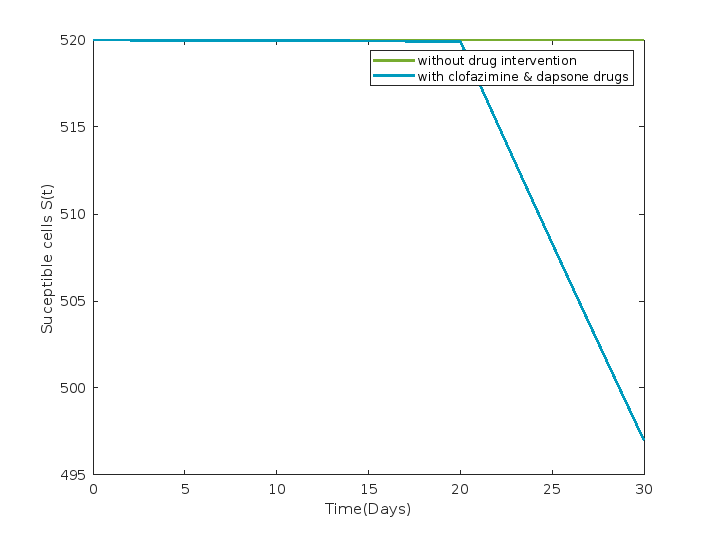}
        \caption{Graph 1}
         \label{fig:graph37}
    \end{subfigure}
    \hfill
    \begin{subfigure}{0.30\textwidth}
        \includegraphics[width=\textwidth]{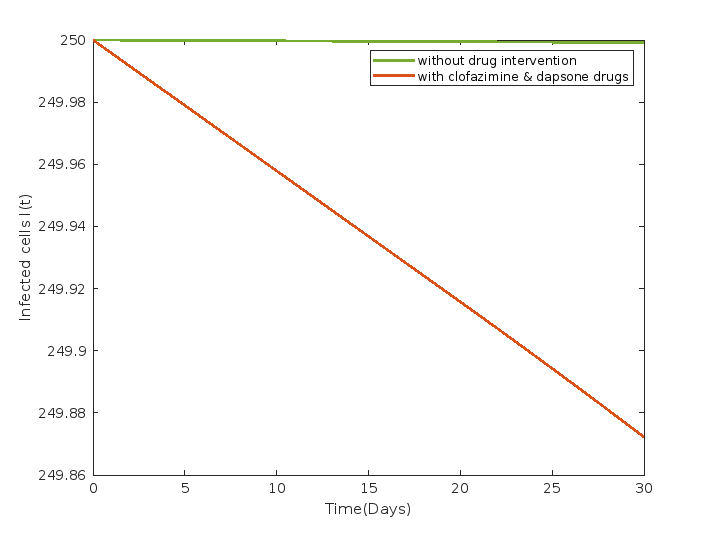}
        \caption{Graph 2}
         \label{fig:graph38}
    \end{subfigure}
    \hfill
    \begin{subfigure}{0.30\textwidth}
        \includegraphics[width=\textwidth]{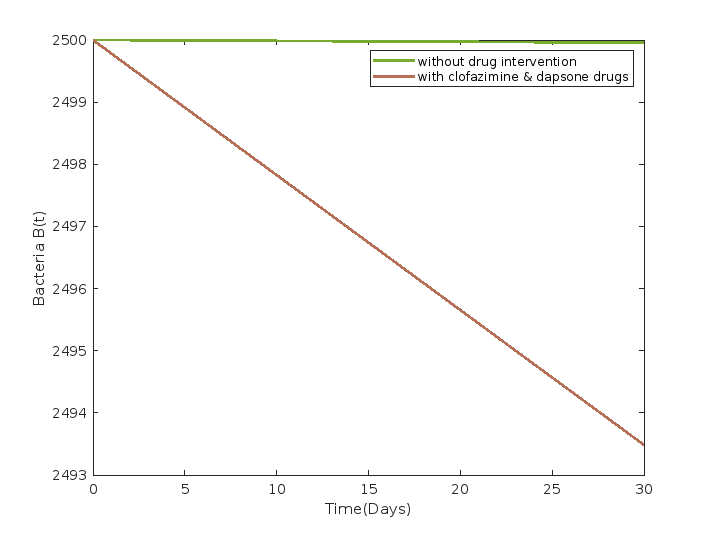}
        \caption{Graph 3}
         \label{fig:graph39}
    \end{subfigure}
    \hfill
    \begin{subfigure}{0.30\textwidth}
        \includegraphics[width=\textwidth]{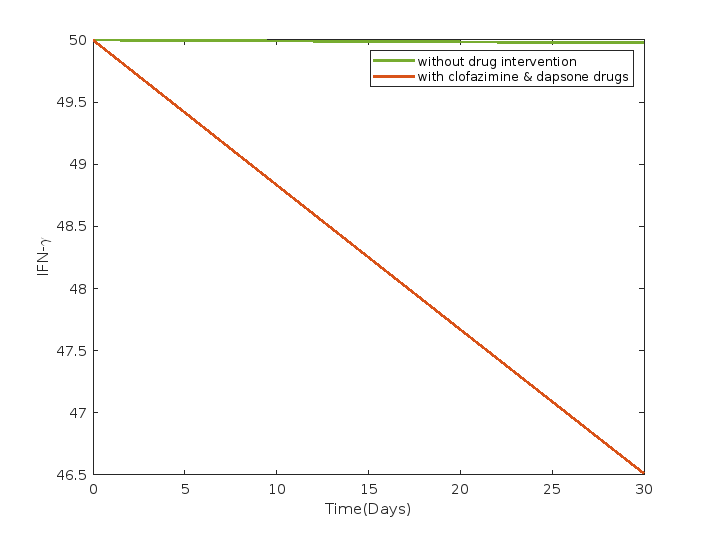}
        \caption{Graph 4}
         \label{fig:graph40}
    \end{subfigure}
    \hfill
    \begin{subfigure}{0.30\textwidth}
        \includegraphics[width=\textwidth]{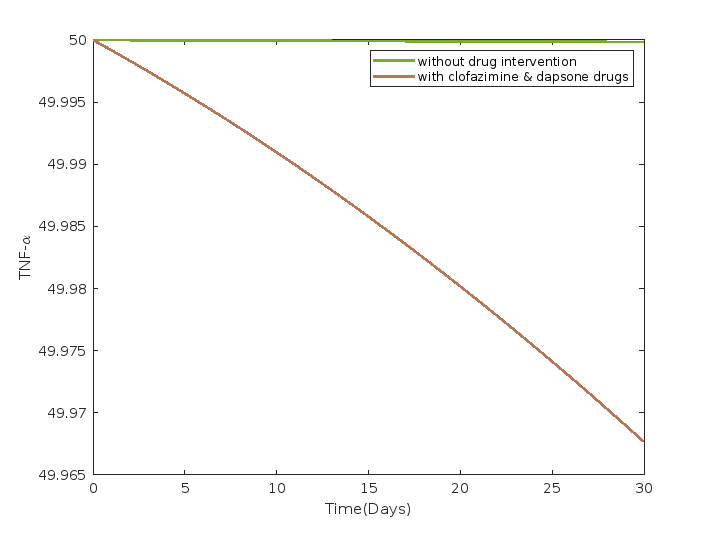}
        \caption{Graph 5}
         \label{fig:graph41}
    \end{subfigure}
    \hfill
    \begin{subfigure}{0.30\textwidth}
        \includegraphics[width=\textwidth]{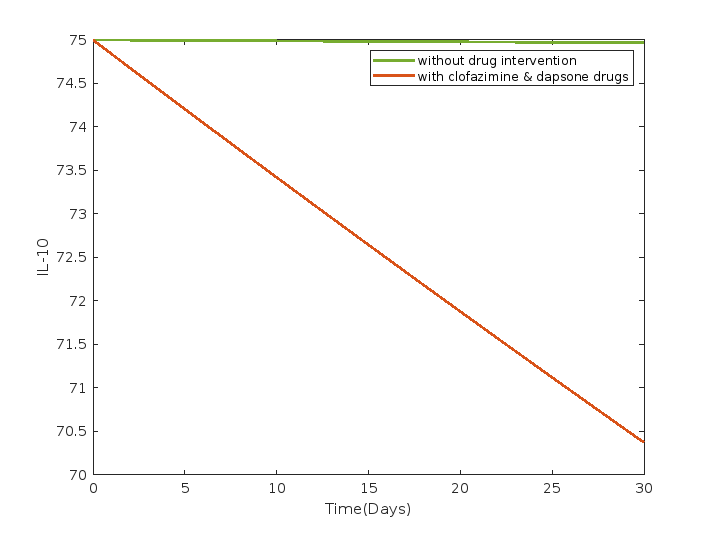}
        \caption{Graph 6}
         \label{fig:graph42}
    \end{subfigure}
    \hfill
    \begin{subfigure}{0.30\textwidth}
        \includegraphics[width=\textwidth]{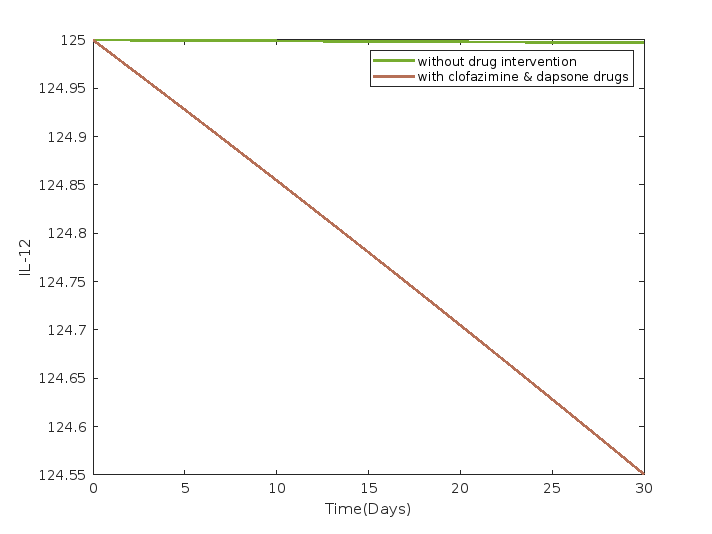}
        \caption{Graph 7}
         \label{fig:graph43}
    \end{subfigure}
    \hfill
    \begin{subfigure}{0.30\textwidth}
        \includegraphics[width=\textwidth]{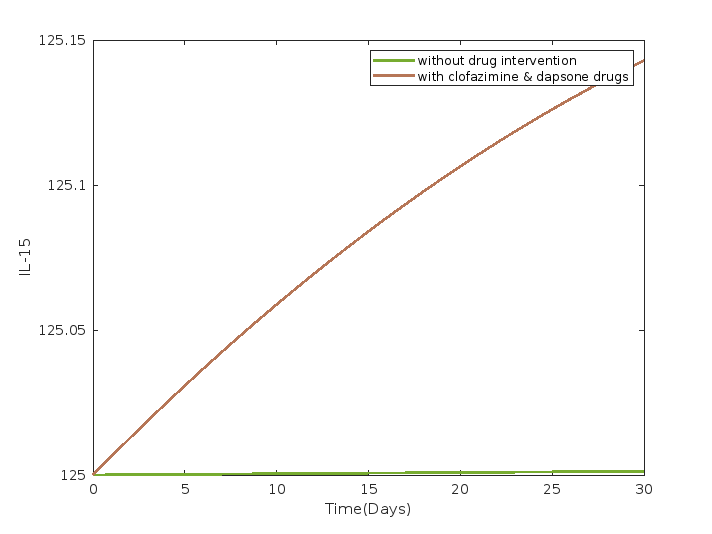}
        \caption{Graph 8}
         \label{fig:graph44}
    \end{subfigure}
    \hfill
    \begin{subfigure}{0.30\textwidth}
        \includegraphics[width=\textwidth]{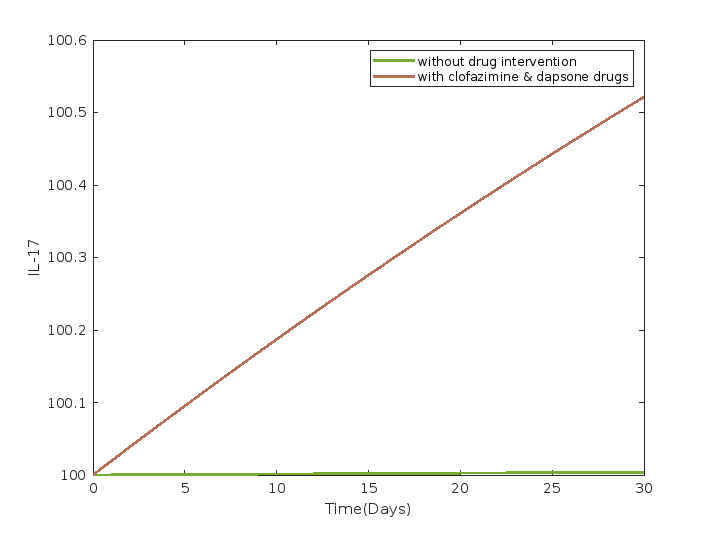}
        \caption{Graph 9}
         \label{fig:graph45}
    \end{subfigure}
    \caption{Plots depicting the influence of clofazimine and dapsone drugs at a time for one month  }
    \label{fig:Clo&Dap}
\end{figure}

\begin{figure}[htbp]
    \centering
    \begin{subfigure}{0.30\textwidth}
        \includegraphics[width=\textwidth]{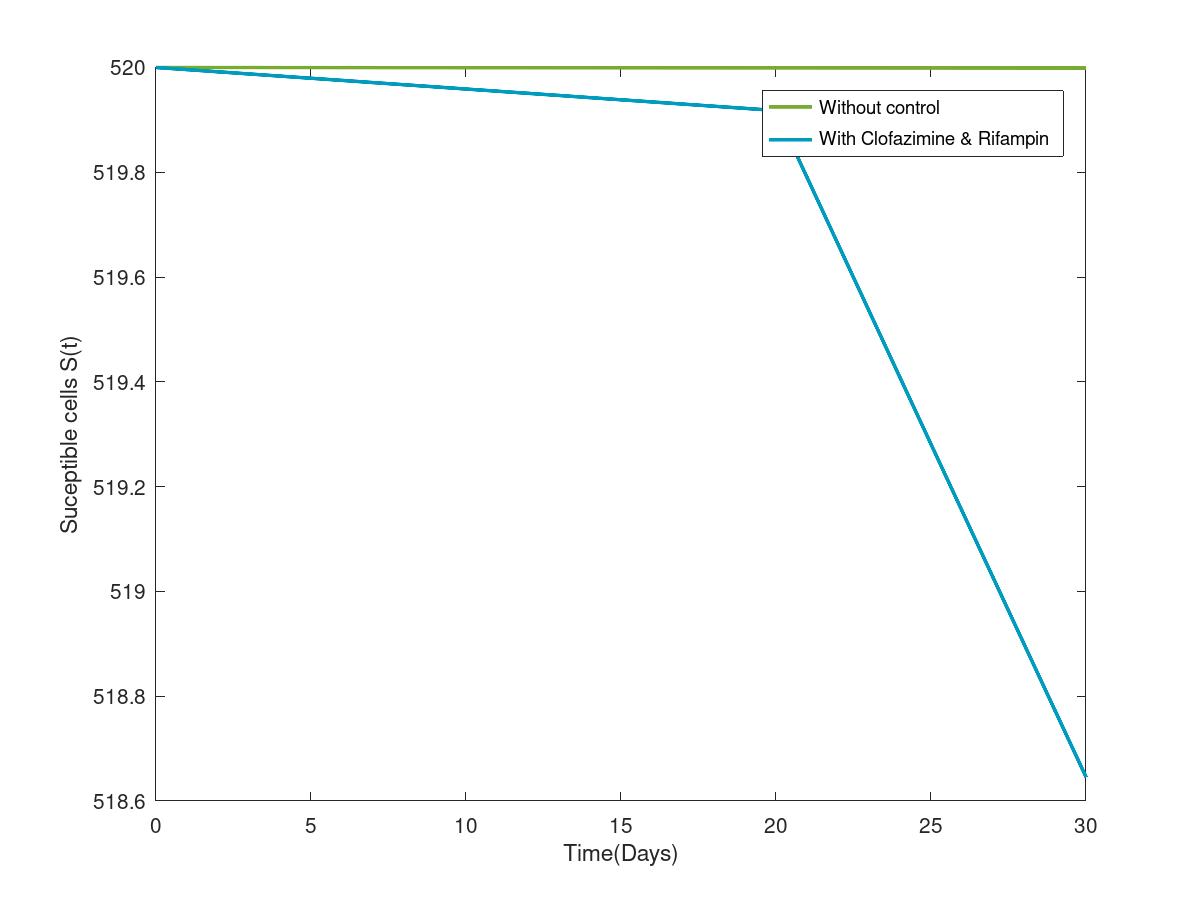}
        \caption{Graph 1}
        \label{fig:graph46}
    \end{subfigure}
    \hfill
    \begin{subfigure}{0.30\textwidth}
        \includegraphics[width=\textwidth]{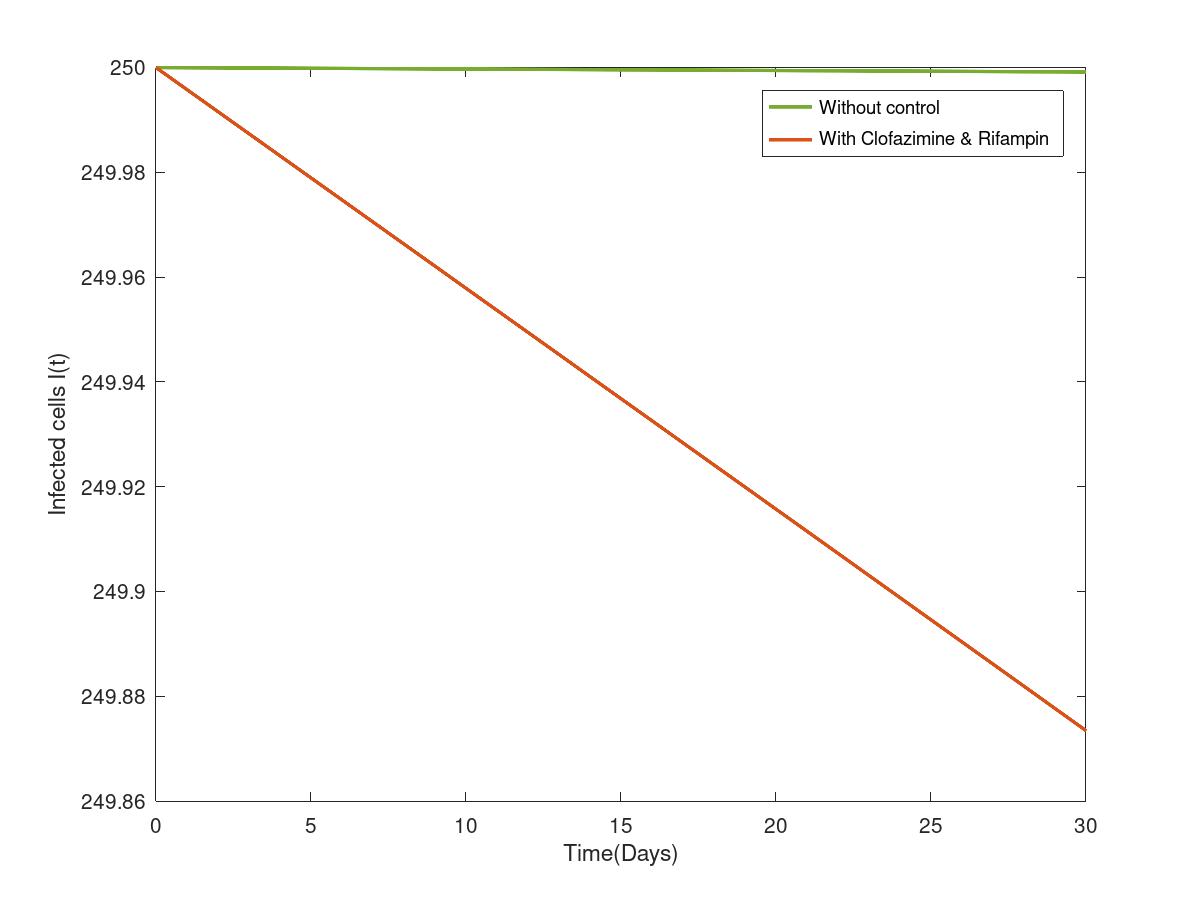}
        \caption{Graph 2}
        \label{fig:graph47}
    \end{subfigure}
    \hfill
    \begin{subfigure}{0.30\textwidth}
        \includegraphics[width=\textwidth]{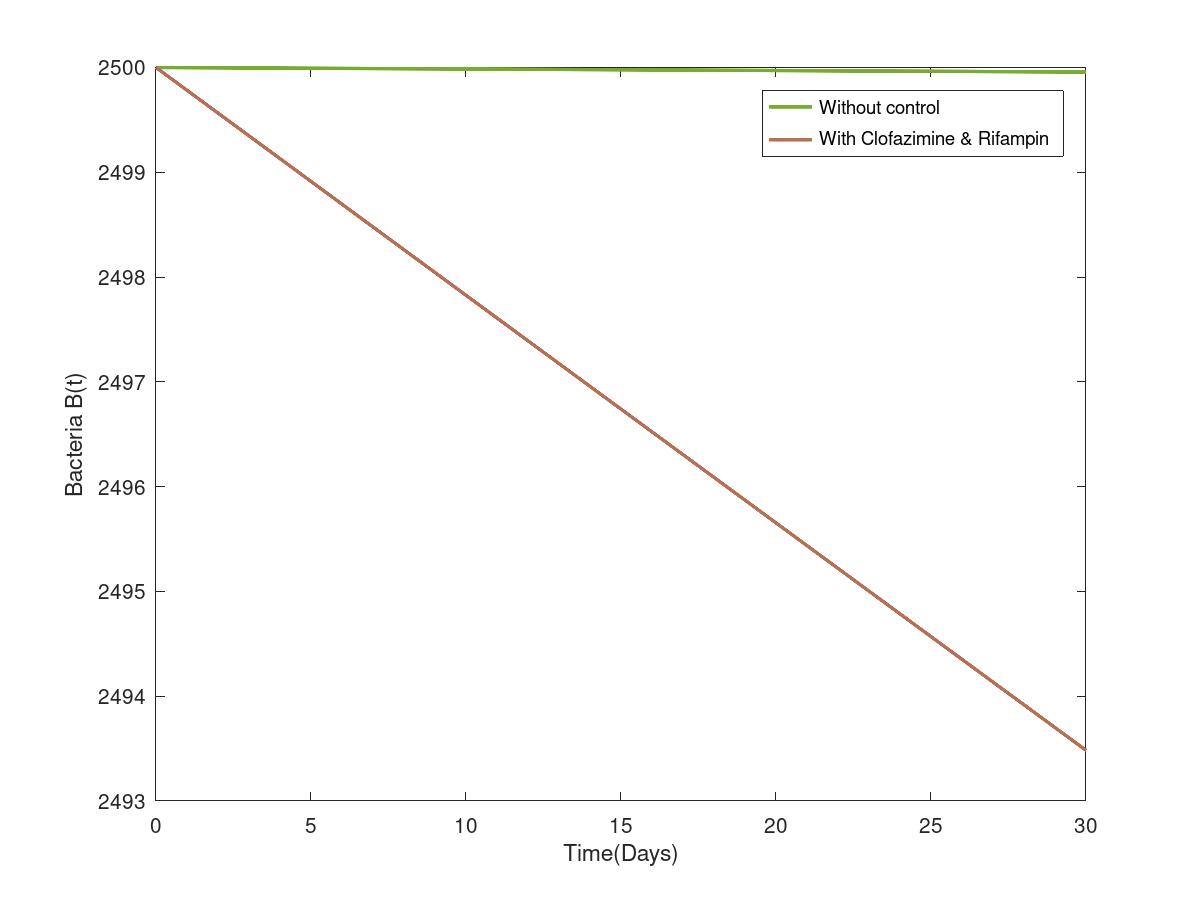}
        \caption{Graph 3}
        \label{fig:graph48}
    \end{subfigure}
    \hfill
    \begin{subfigure}{0.30\textwidth}
        \includegraphics[width=\textwidth]{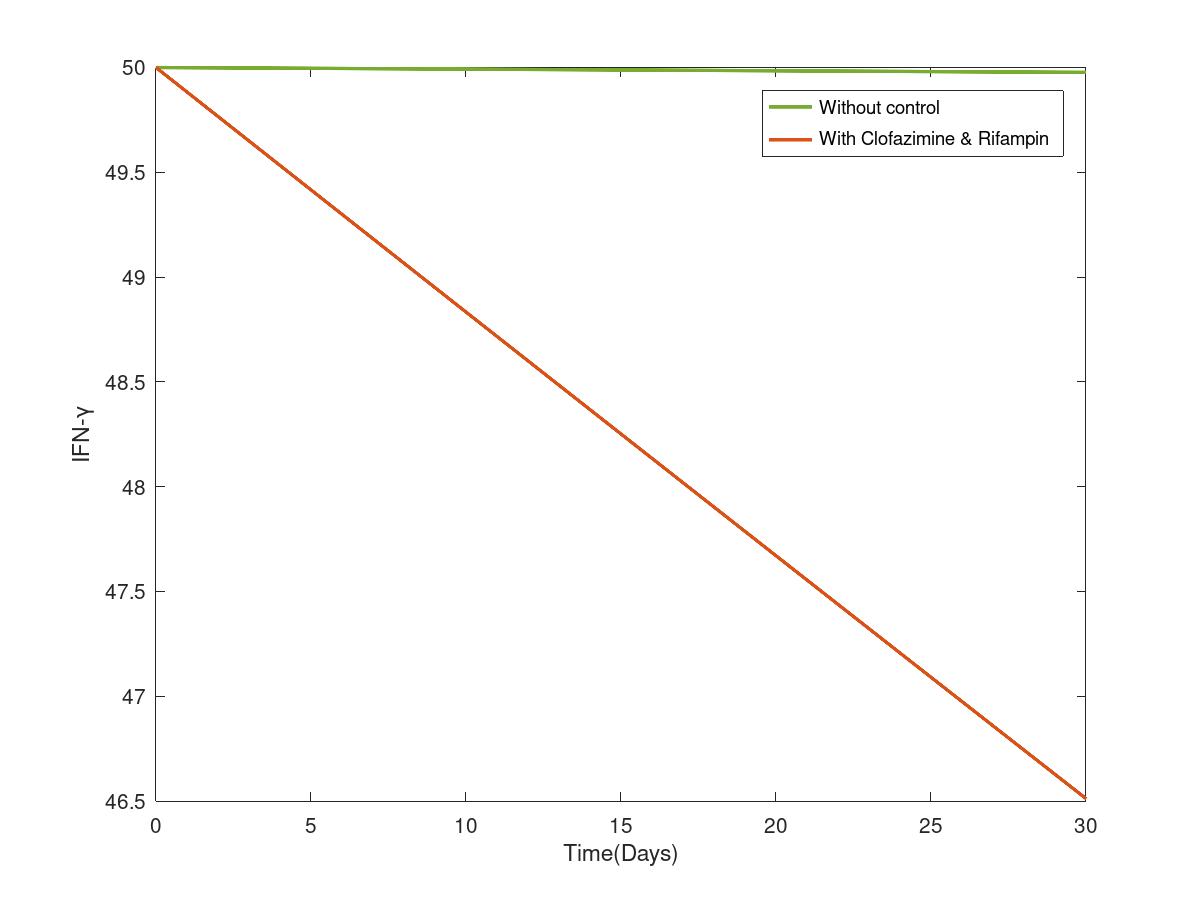}
        \caption{Graph 4}
        \label{fig:graph49}
    \end{subfigure}
    \hfill
    \begin{subfigure}{0.30\textwidth}
        \includegraphics[width=\textwidth]{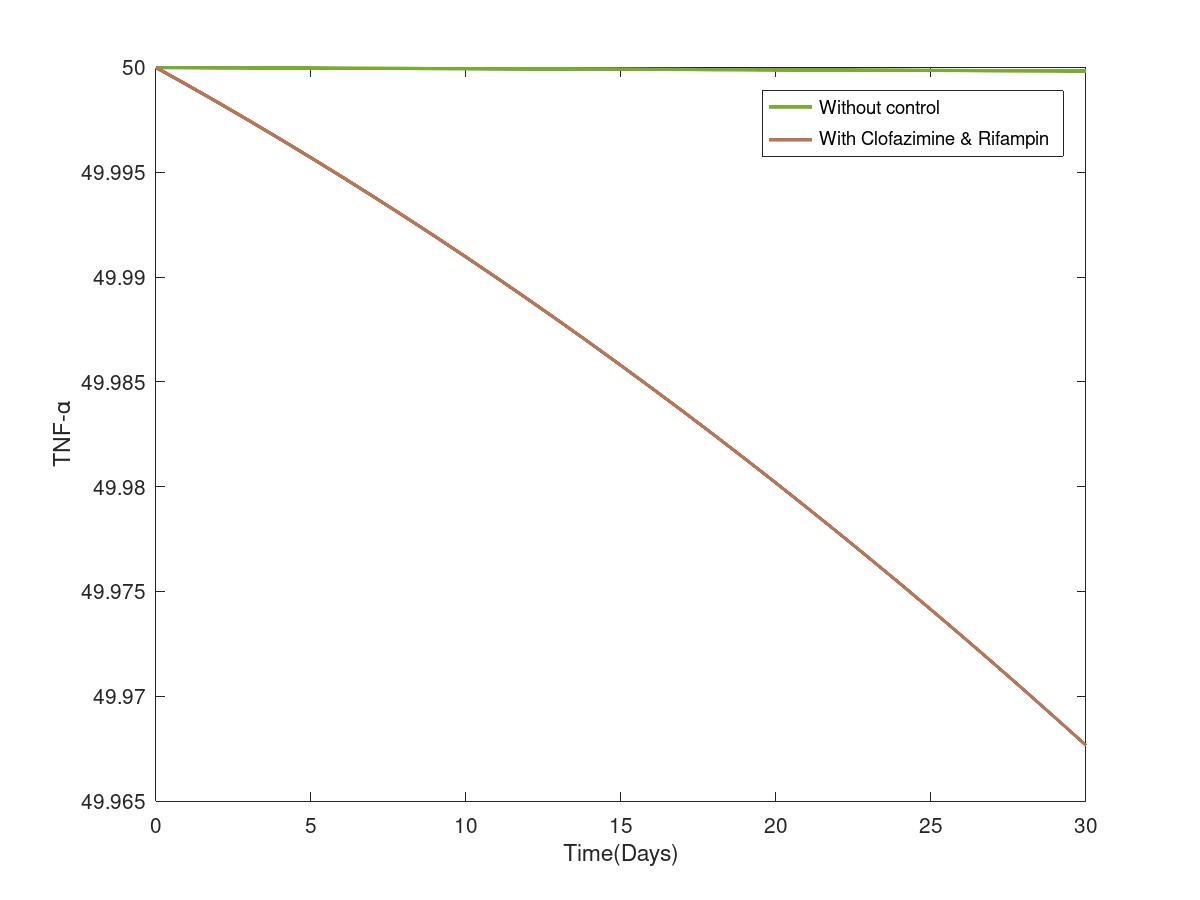}
        \caption{Graph 5}
        \label{fig:graph50}
    \end{subfigure}
    \hfill
    \begin{subfigure}{0.30\textwidth}
        \includegraphics[width=\textwidth]{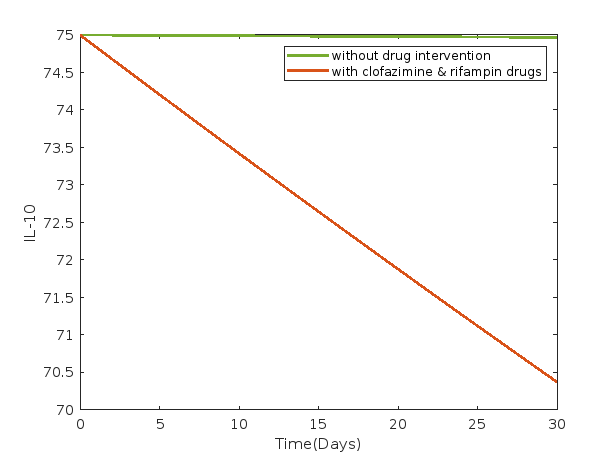}
        \caption{Graph 6}
        \label{fig:graph51}
    \end{subfigure}
    \hfill
    \begin{subfigure}{0.30\textwidth}
        \includegraphics[width=\textwidth]{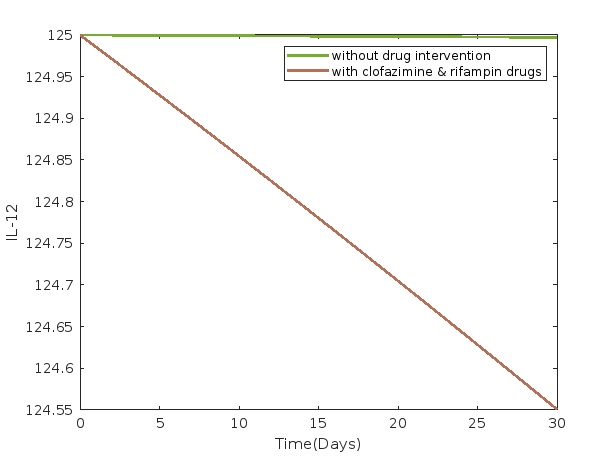}
        \caption{Graph 7}
        \label{fig:graph52}
    \end{subfigure}
    \hfill
    \begin{subfigure}{0.30\textwidth}
        \includegraphics[width=\textwidth]{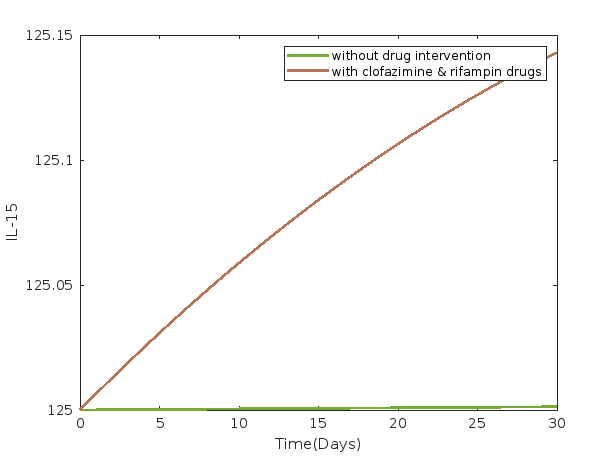}
        \caption{Graph 8}
        \label{fig:graph53}
    \end{subfigure}
    \hfill
    \begin{subfigure}{0.30\textwidth}
        \includegraphics[width=\textwidth]{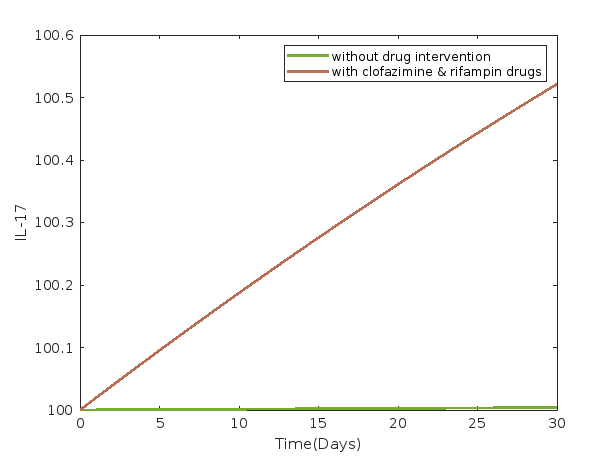}
        \caption{Graph 9}
        \label{fig:graph54}
    \end{subfigure}
    \caption{Plots depicting the influence of rifampin and Clofazimine drugs at a time for one month  }
    \label{fig:Rif&Clo}
\end{figure}

\begin{figure}[htbp]
    \centering
    \begin{subfigure}{0.30\textwidth}
        \includegraphics[width=\textwidth]{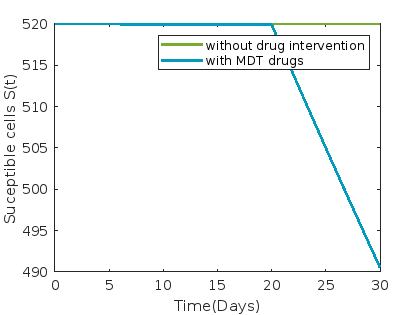}
        \caption{Graph 1}
        \label{fig:graph55}
    \end{subfigure}
    \hfill
    \begin{subfigure}{0.30\textwidth}
        \includegraphics[width=\textwidth]{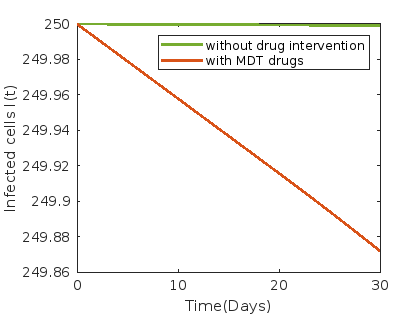}
        \caption{Graph 2}
        \label{fig:graph56}
    \end{subfigure}
    \hfill
    \begin{subfigure}{0.30\textwidth}
        \includegraphics[width=\textwidth]{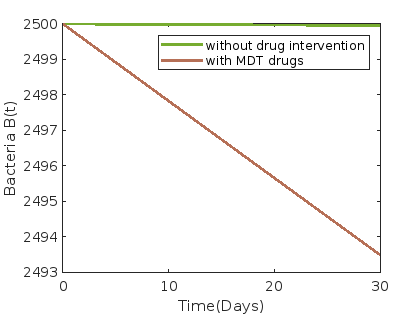}
        \caption{Graph 3}
        \label{fig:graph57}
    \end{subfigure}
    \hfill
    \begin{subfigure}{0.30\textwidth}
        \includegraphics[width=\textwidth]{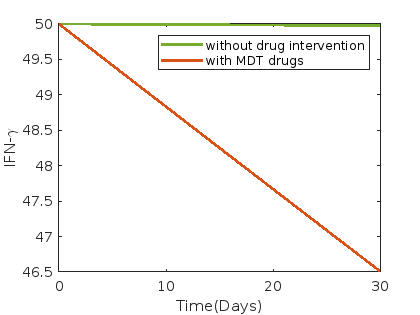}
        \caption{Graph 4}
        \label{fig:graph58}
    \end{subfigure}
    \hfill
    \begin{subfigure}{0.30\textwidth}
        \includegraphics[width=\textwidth]{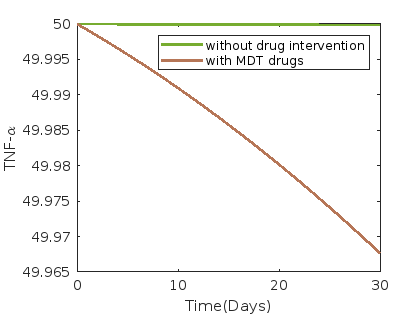}
        \caption{Graph 5}
        \label{fig:graph59}
    \end{subfigure}
    \hfill
    \begin{subfigure}{0.30\textwidth}
        \includegraphics[width=\textwidth]{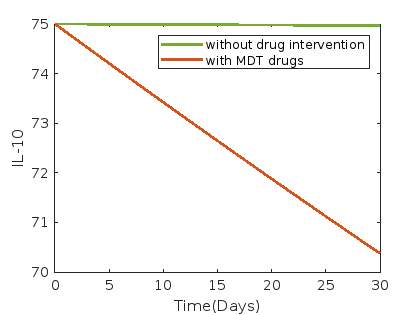}
        \caption{Graph 6}
        \label{fig:graph60}
    \end{subfigure}
    \hfill
    \begin{subfigure}{0.30\textwidth}
        \includegraphics[width=\textwidth]{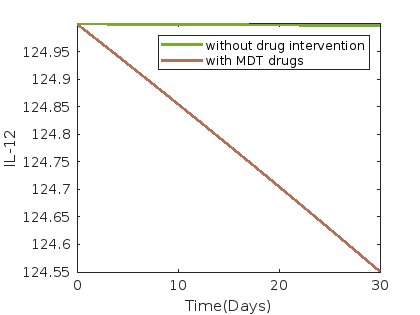}
        \caption{Graph 7}
        \label{fig:graph61}
    \end{subfigure}
    \hfill
    \begin{subfigure}{0.30\textwidth}
        \includegraphics[width=\textwidth]{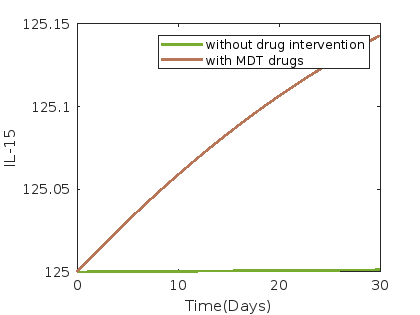}
        \caption{Graph 8}
        \label{fig:graph62}
    \end{subfigure}
    \hfill
    \begin{subfigure}{0.30\textwidth}
        \includegraphics[width=\textwidth]{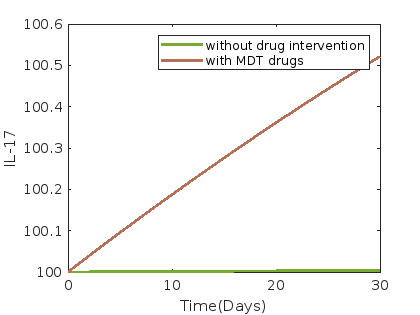}
        \caption{Graph 9}
        \label{fig:graph63}
    \end{subfigure}
    \caption{Plots depicting the influence of rifampin, clofazimine and dapsone drugs at a time for one month  }
    \label{fig:mdt}
\end{figure}

\begin{table}[htbp]
   \centering
    \begin{tabular}{|c|c|c|c|c|}
    \hline
        \textbf{compartments} & \textbf{without drugs } & \textbf{with rifampin } & \textbf{with dapsone } & \textbf{with clofazimine }\\  
        \hline
$S(t)$ & 519.999587   & 519.869137 & 518.629671 & 519.692631 \\
\hline
$I(t)$ & 249.999579  & 249.936781 & 249.936732 & 249.936774 \\
\hline
$B(t)$ & 2499.978250  & 2496.740464 & 2496.740464 & 2496.740464 \\
\hline
$I_{\gamma}(t)$ & 49.988312  &  48.252862 & 48.252862 & 48.252862 \\
\hline
$T_{\alpha}(t)$ & 49.999918 & 49.985089  & 49.985089 & 49.985089 \\
\hline
$I_{10}(t)$ & 74.984018 & 72.659750  &  72.659750 & 72.659750 \\
\hline
$I_{12}(t)$ & 124.998569 & 124.778566  & 124.778566 & 124.778566\\
\hline
$I_{15}(t)$ &  125.000643 & 125.079538 & 125.079538 & 125.079538 \\
\hline
$I_{17}(t)$ & 100.001938 & 100.270343 & 100.270343 & 100.270343\\
\hline
\end{tabular}
\caption{Average compartments values on individual administration of  rifampin, dapsone, clofazimine   over a 30-days period.}
\label{tab:avg_1d}
\end{table}

\begin{table}[htbp]
   \centering
    \begin{tabular}{|c|c|c|c|c|}
    \hline
        \textbf{compartments} & \textbf{without drugs } & \textbf{with rifampin } & \textbf{with dapsone } & \textbf{with clofazimine }\\  
        \hline
$S(t)$ & 519.999202   & 519.530281 & 513.249360 & 518.634813 \\
\hline
$I(t)$ & 249.999186  & 249.877716 & 249.877398 & 249.877671 \\
\hline
$B(t)$ & 2499.957950  & 2493.700671 & 2493.700671 & 2493.700671 \\
\hline
$I_{\gamma}(t)$ & 49.977404  &  46.627260  & 46.627260 & 46.627260 \\
\hline
$T_{\alpha}(t)$ & 49.999842 & 49.968988  & 49.968988 & 49.968988 \\
\hline
$I_{10}(t)$ & 74.969104 & 70.522049  &  70.522049 & 70.522049 \\
\hline
$I_{12}(t)$ & 124.997232 & 124.566354  & 124.566354 & 124.566354 \\
\hline
$I_{15}(t)$ &  125.001243 & 125.139782 & 125.139782 & 125.139782 \\
\hline
$I_{17}(t)$ & 100.003745 & 100.505891 & 100.505891 & 100.505891\\
\hline
\end{tabular}
\caption{30th-day compartments values on individual administration of  rifampin, dapsone, clofazimine over a 30-days period. }
\label{tab:last_1d}
\end{table}
\newpage

\begin{table}[htbp]
   \centering
    \begin{tabular}{|c|c|c|c|c|}
    \hline
        \textbf{compartments} & \textbf{without drugs} & \textbf{rifampin, dapsone } & \textbf{dapsone, clofazamine } & \textbf{clofazimine, rifampin } \\  
        \hline
$S(t)$ & 519.999587   & 517.960166 & 515.850690 & 519.099803 \\
\hline
$I(t)$ & 249.999579  & 249.936706 & 249.936623 & 249.936751 \\
\hline
$B(t)$ & 2499.978250  & 2496.740464 & 2496.740464 & 2496.740464 \\ 
\hline
$I_{\gamma}(t)$ & 49.988312  &  48.252862 & 48.252862 & 48.252862 \\
\hline
$T_{\alpha}(t)$ & 49.999918 & 49.985089  & 49.985089 & 49.985089 \\
\hline
$I_{10}(t)$ & 74.984018 & 72.659750  &  72.659750 & 72.659750 \\
\hline
$I_{12}(t)$ & 124.998569 & 124.778566  & 124.778566 & 124.778566\\
\hline
$I_{15}(t)$ &  125.000643 & 125.079538 & 125.079538 & 125.079538 \\
\hline
$I_{17}(t)$ & 100.001938 & 100.270343 & 100.270342 & 100.270343\\
\hline
\end{tabular}
\caption{Average compartments values on combined administration of   rifampin+dapsone, dapsone+clofazamine and clofazamine+rifampin   over a 30-days period.}
\label{tab:avg_2d}
\end{table}

\begin{table}[htbp]
   \centering
    \begin{tabular}{|c|c|c|c|c|}
    \hline
        \textbf{compartments} & \textbf{without drugs} & \textbf{rifampin, dapsone } & \textbf{dapsone, clofazamine } & \textbf{clofazimine, rifampin } \\  
        \hline
$S(t)$ & 519.999202   & 509.865071 & 499.232056 & 515.630278 \\
\hline
$I(t)$ & 249.999186  & 249.877226  & 249.876684 & 249.877519 \\
\hline
$B(t)$ & 2499.957950  & 2493.700671 & 2493.700671 & 2493.700671 \\ 
\hline
$I_{\gamma}(t)$ & 49.977404  &  46.627260 & 46.627261 & 46.627260 \\
\hline
$T_{\alpha}(t)$ & 49.999842 & 49.968988  & 49.968987 & 49.968988 \\
\hline
$I_{10}(t)$ & 74.969104 & 70.522049  &  70.522049 & 70.522049 \\
\hline
$I_{12}(t)$ & 124.997232 & 124.566354  & 124.566354 & 124.566354\\
\hline
$I_{15}(t)$ &  125.001243 & 125.139782 & 125.139781 & 125.139782 \\
\hline
$I_{17}(t)$ & 100.003745 & 100.505891 & 100.505890 & 100.505891\\
\hline
\end{tabular}
\caption{30th-day compartments values on combined administration of   rifampin+dapsone, dapsone+clofazamine and clofazamine+rifampin   over a 30-days period.}
\label{tab:last_2d}
\end{table}

\begin{table}[htbp]
    \centering
    \begin{tabular}{|c|c|c|c|c|}
        \hline
        \multirow{2}{*}{\textbf{compartments}} & \multicolumn{2}{c|}{\textbf{without drugs}} & \multicolumn{2}{c|}{\textbf{rifampin, dapsone and clofazimine }} \\
        \cline{2-5}
         &  Average & 30th day & Average & 30th day \\
        \hline
        $S(t)$ & 519.999587   & 519.999202  & 514.688830 & 493.397175 \\ 
        \hline
       $I(t)$ & 249.999579  &  249.999186 & 249.936577 & 249.876385 \\ 
       \hline
       $B(t)$ & 2499.978250  & 2499.957950 & 2496.740464 & 2493.700671 \\
       \hline
       $I_{\gamma}(t)$ & 49.988312  &  49.977404 & 48.252862 & 46.627261\\ 
       \hline
       $T_{\alpha}(t)$ & 49.999918 & 49.999842 & 49.985089 & 49.968987 \\
       \hline
       $I_{10}(t)$ & 74.984018 & 74.969104 & 72.659750 & 70.522049\\
       \hline
       $I_{12}(t)$ & 124.998569 & 124.997232 & 124.778566 & 124.566354 \\ 
       \hline
       $I_{15}(t)$ & 125.000643 & 125.001243 & 125.079538 & 125.139781 \\
       \hline
       $I_{17}(t)$ & 100.001938 & 100.003745 & 100.270342 & 100.505890\\
       \hline
    \end{tabular}
    \caption{Average and 30th day  compartments values on    MDT drug administration over a 30 days period.}
    \label{tab:Avg&last_3d}
\end{table}

\newpage

\section{Optimal Control studies incorporating second drug dosage} \label{sec5}
In this section, we introduce a delay into our model (\ref{sec2equ1}) - (\ref{sec2equ11}) to simulate a two-month duration for administering the drug, with the second dosage given 30 days after the first dosage based on tables \ref{tab:treatment1} and \ref{tab:treatment2} as per the clinical and medical guidelines. This model however can be extrapolated to  real-administration scenario based on the WHO 2018 guidelines for Multi Drug therapy (MDT) consisting of drugs rifampin, dapsone and clofazimine with certain dosage   administered every 30 days over a period of 12 months for the treatment of leprosy. The duration may vary based on whether it's paucibacillary leprosy (6 months) or multibacillary leprosy (12 months). Request to kindly refer tables \ref{tab:treatment1} and \ref{tab:treatment2}.\\

Motivated by the above now in our model (\ref{sec5equ1}) - (\ref{sec5equ11}), we introduce  a delay of $\tau$ = 30 days.  \\

We consider controls $D_{11}$, $D_{21}$, $D_{31}$ at $(t - \tau)$ for the first 30 days and controls $D_{12}$, $D_{22}$, $D_{32}$ associated with the next 30 days over a period of 60 days.
\subsection{The Delay Model} \label{sec5a}
The control set for this is given by 
$$U=\Big\{D_{ij}(t) \ \big| \ D_{ij}(t)\in[0,D_{ij}max],  1\leq i \leq 3, 1\leq j \leq 2,  t\in[0,T]\Big\},$$
and the revised objective function and the control system are provided as follows: \\
{\textit{Cost functional}}:
\begin{align}
  \begin{split} \label{costfdelay}
   \mathcal{J}_{min}\big(I,B,D_{11},D_{21},D_{31},D_{12},D_{22},D_{32}\big) \ &= \int_{0}^{T} \Big(I(t) + B(t)+P(D^{2}_{11}(t-\tau) + D^{2}_{12}(t))\\
   \ & + Q (D^{2}_{21}(t - \tau) + D^{2}_{22}(t)) +  R(D_{31}^{2}(t-\tau) + D_{32}^{2}(t)) \Big) dt 
  \end{split}
\end{align}
{\textit{Control system}}:
\begin{align}
     \frac{dc_{1}}{dt}  \ &=  \ \frac{D_{11}(t-\tau) + D_{21}(t - \tau) + D_{31}(t-\tau)}{V_{1}} + \frac{D_{12}(t) + D_{2}(t) + D_{3}(t)}{V_{1}} \ - \big( k_{12}  + k_{1}\big)c_{1}  \label{sec5equ1} \\
      \frac{dc_{2}}{dt} \ & = \ k_{12}\frac{V_{1}}{V_{2}} c_{1} \ - k_{2}c_{2}  \label{sec5equ2} \\
    \frac{dS}{dt} \ & = \ \omega \ - \beta S B  - \gamma S  - \mu_{1} S - (\mu_{d_1} + \mu_{d_2} +\mu_{d_3}) c_{1}(t-\tau-\tau_{d})S - (\mu_{d_1} + \mu_{d_2} +\mu_{d_3}) c_{1}(t-\tau_{d}) S\label{sec5equ3} \\
	  \frac{dI}{dt} \ &= \ \beta SB \ - \delta I  - \mu_{1} I - (\eta k_{d_1} + \eta  k_{d_2} + \eta  k_{d_3} ) \cdot (c_{2} - C_{min})  \cdot  H\left[(c_{2} - C_{min})\right] \cdot I \label{sec5equ4}\\ 
   	\frac{dB}{dt} \ &= \ \alpha I   \ - y B - \mu_{2} B  -  (k_{d_1} + k_{d_2} + k_{d_3})\cdot (c_{2} - C_{min} )  \cdot  H\left[(c_{2} - C_{min})\right] \cdot B\label{sec5equ5}\\
     \frac{dI_{\gamma}}{dt} \ &= \ \alpha_{I_{\gamma}} I   \ - \left[\delta_{T_{\alpha}}^{I_{\gamma}}T_{\alpha} + \delta_{I_{12}}^{I_{\gamma}}I_{12} +\delta_{I_{15}}^{I_{\gamma}}I_{15} +\delta_{I_{17}}^{I_{\gamma}}I_{17}\right]I - \mu_{I_{\gamma}}\big(I_{\gamma} - Q_{I_{\gamma}}\big) \label{sec5equ6}\\
     \frac{dT_{\alpha}}{dt} \ &= \ \beta_{T_{\alpha}}I_{\gamma} I   \  - \mu_{T_{\alpha}}\big(T_{\alpha} - Q_{T_{\alpha}}\big) \label{sec5equ7}\\
     \frac{dI_{10}}{dt} \ &= \ \alpha_{I_{10}} I   \ - \delta_{I_{\gamma}}^{I_{10}}I_{\gamma} - \mu_{I_{10}}\big(I_{10} - Q_{I_{10}}\big) \label{sec5equ8}\\
     \frac{dI_{12}}{dt} \ &= \ \beta_{I_{12}} I_{\gamma} I \ -  \mu_{I_{12}}\big(I_{12} - Q_{I_{12}}\big) \label{sec5equ9}\\
     \frac{dI_{15}}{dt} \ &= \  \beta_{I_{15}} I_{\gamma}I \ -  \mu_{I_{15}}\big(I_{15} - Q_{I_{15}}\big) \label{sec5equ10}\\
     \frac{dI_{17}}{dt} \ &= \  \beta_{I_{17}} I_{\gamma}I \ -  \mu_{I_{17}}\big(I_{17} - Q_{I_{17}}\big) \label{sec5equ11}
\end{align}
Here, the Lagrangian is the integrand of the cost functional (\ref{costfdelay}) and is given by
\begin{align}
   \begin{split}\label{Lagradly}
   L(I,B,D_{11},D_{21},D_{31},D_{12},D_{22},D_{32})\ & = I(t) + B(t)+P(D^{2}_{11}(t-\tau) + D^{2}_{12}(t))\\
   \ & + Q (D^{2}_{21}(t - \tau) + D^{2}_{22}(t)) +  R(D_{31}^{2}(t-\tau) + D_{32}^{2}(t)) 
   \end{split}
\end{align}
The set of admissible solutions for the above optimal control problem will be 
$$\Omega =\Big\{(I,B,D_{11},D_{21},D_{31},D_{12},D_{22},D_{32}) \ \big| I,B \ satisfy \ (\ref{sec5equ1})-(\ref{sec5equ11}) \ \forall (D_{11},D_{21},D_{31},D_{12},D_{22},D_{32}) \in U \Big\}.$$
The existence of optimal control can be shown similarly as in section 4.\\
The Hamiltonian of the control system (\ref{sec5equ1}) - (\ref{sec5equ11}) is as follows
\begin{align}
\begin{split}
    \mathcal{H}(I,B,D_{11},D_{21},D_{31},D_{12},D_{22},D_{32},\lambda) & = L(I,B,D_{11},D_{21},D_{31},D_{12},D_{22},D_{32}) + \lambda_{1}\frac{dc_1}{dt} + \lambda_{2}\frac{dc_2}{dt} \\
    \ & + \lambda_{3}\frac{dS}{dt} + \lambda_{4}\frac{dI}{dt} + \lambda_{5}\frac{dB}{dt} + \lambda_{6}\frac{dI_{\gamma}}{dt} + \lambda_{7}\frac{dT_{\alpha}}{dt} + \lambda_{8}\frac{dI_{10}}{dt}\\
    \ & + \lambda_{9}\frac{dI_{12}}{dt} +\lambda_{10}\frac{dI_{15}}{dt} +\lambda_{11}\frac{dI_{17}}{dt}
\end{split}
\end{align}
where $\lambda = (\lambda_{1},\lambda_{2},\lambda_{3},\lambda_{4},\lambda_{5},\lambda_{6},\lambda_{7},\lambda_{8},\lambda_{9},\lambda_{10},\lambda_{11})$ is co-state variable or adjoint vector. \\

Since we have $D^{*} = (D_{11}^{*},D_{12}^{*},D_{21}^{*},D_{22}^{*},D_{31}^{*},D_{32}^{*})$ and $X^{*} = (x_{1},x_{2},x_{3},x_{4},x_{5},x_{6},x_{7},x_{8},x_{9},x_{10},x_{11})$ as optimal control and state variable respectively, using Pontryagin maximum principle there exists an optimal co-state variable say $\lambda^{*}$
which satisfies the canonical equation
\begin{equation}\label{co-stdly}
       \frac{d\lambda_{j}}{dt} = -\frac{\partial \mathcal{H}(X^{*},D^{*},\lambda^{*})}{\partial x_{j}}
\end{equation}
Using the above equation we get the below system of ODE's for co-state variables as follows\\
\begin{align}
    \frac{d\lambda_{1}}{dt} &= (k_{12} + k_1)\lambda_{1} - k_{12}\left(\frac{V_1}{V_2}\right)\lambda_{2} + 2(\mu_{d_1} + \mu_{d_2} + \mu_{d_3})S\lambda_{3} \\
   \frac{d\lambda_{2}}{dt} &= k_2\lambda_{2} + \eta(k_{d_1}+ k_{d_2} + k_{d_3}) H[(c_2-c_{min})] \cdot I\lambda_{4} + (k_{d_1} + k_{d_2} + k_{d_3})  H[(c_2-c_{min})] \cdot B\lambda_{5} \\
    \frac{d\lambda_{3}}{dt} &= \big(\beta B + \mu_1 + \gamma + (\mu_{d_1} + \mu_{d_2} + \mu_{d_3})c_1(t - \tau - \tau_d) + (\mu_{d_1} + \mu_{d_2} + \mu_{d_3})c_1(t - \tau_d) \big)\lambda_{3} - \beta B \lambda_{4} \\
    \begin{split}
    \frac{d\lambda_{4}}{dt} &= \big(\delta + \mu_{1} + (\eta k_{d_1} + \eta k_{d_2} + \eta k_{d_3})(c_2 - C_{min}) H[(c_2 - C_{min})]\big)\lambda_{4} - \alpha \lambda_5 - \alpha_{I_{\gamma}}\lambda_{6} \\
    \ & + \left(\delta_{T_{\alpha}}^{I_{\gamma}}T_{\alpha} + \delta_{I_{12}}^{I_{\gamma}}I_{12} +\delta_{I_{15}}^{I_{\gamma}}I_{15} +\delta_{I_{17}}^{I_{\gamma}}I_{17}\right) \lambda_{6} + \beta_{T_{\alpha}}I_{\gamma} \lambda_{7} - \alpha_{I_{10}} \lambda_{8} - \beta_{I_{12}} I_{\gamma} \lambda_{9} - \beta_{I_{15}} I_{\gamma} \lambda_{10} - \beta_{I_{17}} I_{\gamma} \lambda_{11} - 1 
    \end{split} \\
   \frac{d\lambda_{5}}{dt} &= \beta S \lambda_{3} - \beta S \lambda_{4} + \big(y + \mu_{2} + (k_{d_1} + k_{d_2} + k_{d_3})(c_2 - C_{min}) H[(c_2 - C_{min})]\big)\lambda_{5} - 1 \\
    \frac{d\lambda_{6}}{dt} &= \mu_{I_{\gamma}}\lambda_{6} - \beta_{T_\alpha}I\lambda_{7} + \delta_{I_{10}}^{I_\gamma} I\lambda_8 - \beta_{I_{12}}I\lambda_{9} - \beta_{I_{15}}I\lambda_{10} - \beta_{I_{17}}I\lambda_{11} \\
    \frac{d\lambda_{7}}{dt} &= \delta_{T_{\alpha}}^{I_\gamma} I\lambda_6 + \mu_{T_{\alpha}}\lambda_{7} \\
    \frac{d\lambda_{8}}{dt} &= \mu_{I_{10}} \lambda_{8} \\
    \frac{d\lambda_{9}}{dt} &= \delta_{I_{12}}^{I_\gamma} I\lambda_6  + \mu_{I_{12}}\lambda_{9}  \\
    \frac{d\lambda_{10}}{dt} &= \delta_{I_{15}}^{I_\gamma} I\lambda_6  + \mu_{I_{15}}\lambda_{10} \\
    \frac{d\lambda_{11}}{dt} &= \delta_{I_{17}}^{I_\gamma} I\lambda_6  + \mu_{I_{17}}\lambda_{11}
\end{align}
and the transversality condition $\lambda_{i}(T) = \frac{\partial \phi}{\partial x_i}\big|_{t=T} = 0 $ for all $i = 1,2,3,...,11$ where in this case, the terminal cost function, represented by $\phi$, is constantly zero. \\

Now we use $Newton's \ Gradient \ method$ from \cite{edge1976function} to obtain the optimal value of the controls. For this recursive formula is employed to update the control at each step of the numerical simulation as follows

\begin{align}\label{updatecontrol3}
    D_{ij}^{k+1}(t) = D_{ij}^{k}(t) + \theta_{k}d_k
\end{align}
Here,  $D_{ij}^{k}(t)$ represents the control value at the $k^{th}$ iteration at a given time $t$, $d_k$ signifies the direction, and $\theta_k$ denotes the step size. The direction $d_k$ can be evaluated as negative of gradient of the objective function i.e $d_k = - g_{ij}(D_{ij}^{k})$ ,where $g_{ij}(D_{ij}^{k}) = \frac{\partial \mathcal{H}}{\partial D_{ij}}\big|_{D_{ij}^{k}(t)}$ as mentioned in\cite{edge1976function}.The step size $\theta_k$ is determined at each iteration using a linear search technique aimed at minimizing the Hamiltonian,$\mathcal{H}$.Therefore (\ref{updatecontrol3}) can become as 
\begin{align}\label{updatecontrol4}
    D_{ij}^{k+1}(t) = D_{ij}^{k}(t) - \theta_{k}\frac{\partial \mathcal{H}}{\partial D_{ij}}\Big|_{D_{ij}^{k}(t)}
\end{align}
To implement the aforementioned approach, we need to compute the gradient for each control, denoted as $g_{ij}(D_{ij}^{k})$ , which are listed as follows
\begin{align*}
        g_{11}(D_{11})\ & = 2PD_{11}(t - \tau) + \frac{\lambda_{1}}{V_1}\\
        g_{12}(D_{12})\ &= 2PD_{12}(t) + \frac{\lambda_{1}}{V_1}\\
        g_{21}(D_{21})\ &= 2QD_{21}(t - \tau) + \frac{\lambda_{1}}{V_1}\\
        g_{22}(D_{22})\ &= 2QD_{22}(t) + \frac{\lambda_{1}}{V_1}\\
        g_{31}(D_{31})\ &= 2RD_{31}(t - \tau) + \frac{\lambda_{1}}{V_1} \\
        g_{32}(D_{32})\ &= 2RD_{32}(t) + \frac{\lambda_{1}}{V_1} 
\end{align*}

\subsection{Numerical simulations} \label{sec5b}
Here, we employ the parameter values, initial conditions, and numerical simulation methods identical to those used in section \ref{sec4b}. Additionally, we introduce one extra control associated with each drug, with a delay of $\tau = 30$ days. Therefore, the values of weights associated with these controls remain the same as in section \ref{sec4b}.
We proceed to numerically simulate the populations of  $c_1,$  $c_2,$ $S,$ $I,$ and $B$ along with cytokines levels, employing
single, double and triple control interventions of MDT for 60 days with a delay of 30 days for second dosage.

\subsection{Findings} \label{sec5c}
In this section, we analyze the results from the simulations described earlier. Figures \ref{fig:rifampin_dly} - \ref{fig:mdt_dly} depict the dynamics of the $S$, $I$, $B$, $I_{\gamma}$, $T_{\alpha}$, $I_{10}$, $I_{12}$, $I_{15}$, and $I_{17}$ compartments in our model (\ref{sec5equ3})–(\ref{sec5equ11}). Additionally, these figures illustrate the control flow associated with under different drug administration scenarios over a 60-day period with a single delay of 30 days. Each panel represents a compartment and compares its dynamics without drug intervention, with single dosgae drug interventions and two dosage drug interventions. \\

Figure \ref{fig:rifampin_dly} depicts the dynamics under rifampin administration, while figures \ref{fig:dapsone_dly} and \ref{fig:clofazimine_dly} show the dynamics under dapsone and clofazimine administration, respectively. Average and 60th-day values of each compartment without control and with single drug intervention such as rifampin, dapsone, and clofazimine are presented in tables \ref{tab:avg1_dly} and \ref{tab:last1_dly} respectively, with a  30-day delay for second dosage. \\

In all instances of single drug intervention, involving the administration of rifampin, dapsone, and clofazimine, it is evident from figures \ref{fig:rifampin_dly}, \ref{fig:dapsone_dly}, and \ref{fig:clofazimine_dly} that various compartments, including susceptible cells $S(t)$, infected cells $I(t)$, and bacterial load $B(t)$, as well as  $I_{\gamma}(t)$,  $T_{\alpha}(t)$, $I_{10}(t)$, and  $I_{12}(t)$, exhibit a decreasing trend when compared to scenarios without drug intervention and delay. Conversely, compartments  $I_{15}(t)$ and  $I_{17}(t)$ demonstrate an increasing trend.\\

Dapsone exhibits the most substantial reduction in  both susceptible and infected cells whereas rifampin shows the least reduction when administered as a single drug intervention.\\ 

The results from the figures \ref{fig:rifampin_dly}, \ref{fig:dapsone_dly}, and \ref{fig:clofazimine_dly}  depictit that  the compartments bacterial load, IFN-$\gamma$, TNF-${\alpha}$, IL-10, IL-12, IL-15, and IL-17 show consistency across all single drug interventions. However, differences are noticeable in the values presented in tables \ref{tab:avg_1d} and \ref{tab:last_1d}. Additionally, these tables clearly indicate a reduction in infected cells.\\

From tables \ref{tab:avg_1d} and \ref{tab:last_1d}, 18, 19 it is evident that, similar to susceptible and infected cells, dapsone is also more effective in reducing bacterial load, TNF-${\alpha}$, IL-10, and IL-12, whereas rifampin shows the least reduction when administered as a single drug intervention.\\

IFN-$\gamma$ experiences the most significant reduction with rifampin and the least with dapsone. Additionally, the increment of IL-15 and IL-17 is greater with rifampin and least with dapsone. \\

Figures \ref{fig:Rif&Dap_dly}, \ref{fig:Dap&Clo_dly} and \ref{fig:Clo&Rif_dly} illustrate the dynamics under combinations of two drugs: rifampin and dapsone, clofazimine and dapsone, and rifampin and clofazimine, respectively. Average and 60th-day values of each compartment without control and with a two-drug combined intervention such as rifampin and dapsone, clofazimine and dapsone, and rifampin and clofazimine are presented in tables \ref{tab:avg2_dly} and \ref{tab:last2_dly} respectively, where a  30-day delay is incorporated for second dosage. \\

In all instances of two-drug combination intervention, involving the administration of drug combinations such as rifampin and dapsone, dapsone and clofazimine, and rifampin and clofazimine, it is evident from figures \ref{fig:Rif&Dap_dly}, \ref{fig:Dap&Clo_dly}, and \ref{fig:Clo&Rif_dly} that various compartments, including susceptible cells $S(t)$, infected cells $I(t)$, and bacterial load $B(t)$, as well as  $I_{\gamma}(t)$,  $T_{\alpha}(t)$,  $I_{10}(t)$, and  $I_{12}(t)$, exhibit a decreasing trend when compared to scenarios without drug intervention and delay. Conversely, compartments  $I_{15}(t)$ and  $I_{17}(t)$ demonstrate an increasing trend. \\

The two-drug combination of dapsone and clofazimine exhibits the most substantial reduction in both susceptible and infected cells, whereas the two-drug combination of rifampin and clofazimine shows the least reduction when administered as a two-drug intervention. \\

The results from figures \ref{fig:Rif&Dap_dly}, \ref{fig:Dap&Clo_dly}, and \ref{fig:Clo&Rif_dly}  depicting the compartments bacterial load, IFN-$\gamma$, TNF-${\alpha}$, IL-10, IL-12, IL-15, and IL-17, show consistency across all two-drug interventions. However, differences are noticeable in the values presented in tables \ref{tab:avg2_dly} and \ref{tab:last2_dly}. Additionally, these tables clearly indicate a reduction in infected cells. \\

From tables \ref{tab:avg2_dly} and \ref{tab:last2_dly}, it is evident that, similar to susceptible and infected cells, dapsone and clofazimine is also more effective in reducing bacterial load, TNF-${\alpha}$, IL-10, and IL-12, whereas rifampin and clofazimine shows the least reduction when administered as a two-drug intervention. \\

IFN-$\gamma$ experiences the most significant reduction with rifampin and clofazimine, and the least with dapsone and clofazimine. Additionally, the increment of IL-15 and IL-17 is greater with rifampin and clofazimine, and less with dapsone and clofazimine in two-drug combination. \\

Figure \ref{fig:mdt_dly} illustrates the dynamics under the administration of MDT drugs, comprising rifampin, clofazimine, and dapsone. \\

Average and 60th-day values of each compartment without control and with MDT drug intervention such as rifampin, dapsone, and clofazimine are presented in table \ref{tab:Avg&last3_dly}, where a  30-day delay is incorporated for second drug dosage. \\

In case of MDT drug (rifampin, clofazimine, and dapsone) interventions, we observe from figure \ref{fig:mdt_dly} that the compartments susceptible cells $S(t)$, infected cells $I(t)$, and bacterial load $B(t)$, as well as for  $I_{\gamma}(t)$,  $T_{\alpha}(t)$,  $I_{10}(t)$, and  $I_{12}(t)$, show a decreasing trend compared to the scenarios without drug intervention and delay. Conversely, compartments  $I_{15}(t)$, and $I_{17}(t)$, show an increasing trend.\\

Optimal drug values for individual drug administration, combination of two drugs, and MDT drug administration with two dosages at the first day and next at the 31th day  over a 60-day period are presented in tables \ref{tab:result4}, \ref{tab:result5} and \ref{tab:result6}, respectively.

\begin{table}[ht!]
   \centering
    \begin{tabular}{|c|c|c|c|c|c|}
    \hline
        \multirow{2}{*}{\textbf{Single Drug}} & \multirow{2}{*}{\textbf{Monthly dosage(mg)}} & \multicolumn{2}{c|}{\textbf{First dosage for 60 days(mg)}} &  \multicolumn{2}{c|}{\textbf{Second dosage at 31th-day(mg)}} \\    
        \cline{3-6}
         & & \textbf{Initial}  & \textbf{Optimal} & \textbf{Initial}  & \textbf{Optimal}\\
        \hline
        Rifampin & 600   & 10  & 10 & 20 & 20.013 \\
        \hline
        Dapsone & 3000 & 50 & 50.002 & 100 & 100.009 \\
        \hline
        Clofazimine & 300 & 5 & 5.001 & 10  & 10.005 \\
        \hline
\end{tabular}
\caption{Dosage levels for individual  drug administration for 60-days with delay of 30 days for second drug dosage}
\label{tab:result4}
\end{table}

\begin{table}[ht!]
   \centering
    \begin{tabular}{|c|c|c|c|c|c|}
    \hline
        \multirow{2}{*}{\textbf{Two Drugs}} & \multirow{2}{*}{\textbf{Monthly dosage(mg)}} & \multicolumn{2}{c|}{\textbf{1st dose for 60 days(mg)}} &  \multicolumn{2}{c|}{\textbf{2nd dose at 31th-day(mg)}} \\    
         \cline{3-6}
         & & \textbf{Initial(mg)} & \textbf{Optimal(mg)} & \textbf{Initial(mg)} & \textbf{Optimal(mg)} \\  
        \hline
         Rifampin and Dapsone & 600 + 3000 & 10, 50  & 10.001, 50.001 & 20, 100  & 20.049, 99.949  \\
         \hline
         Dapsone and Clofazimine & 3000 + 300 & 50, 5  & 49.998, 5.00 & 100, 10  & 99.785, 9.996\\
         \hline
          Rifampin and Clofazimine & 600 + 300 & 10, 5 & 10.001, 4.999 & 20, 10 & 19.994, 10.037 \\
         \hline
    \end{tabular}
    \caption{Dosage levels for  combination of two drugs administration for 60-days with delay of 30 days for second drug dosage }
    \label{tab:result5}
\end{table}

\begin{table}[ht!]
   \centering
    \begin{tabular}{|c|c|c|c|c|c|}
        \hline
        \multirow{2}{*}{\textbf{Three Drugs}} & \multirow{2}{*}{\textbf{Monthly dosage(mg)}} & \multicolumn{2}{c|}{\textbf{1st dose for 60 days(mg)}} &  \multicolumn{2}{c|}{\textbf{2nd dose at 31th-day(mg)}} \\    
         \cline{3-6}
         & & \textbf{Initial(mg)} & \textbf{Optimal(mg)} & \textbf{Initial(mg)} & \textbf{Optimal(mg)} \\  
         \hline
         MDT & 600+3000+300 & 10, 50, 5  & 10.001, 50.002, 5.001 & 20, 100, 10  & 19.872, 99.985, 9.939 \\
         \hline
\end{tabular}
\caption{Drug dosage levels for  three drugs administrtion in MDT for 60-days with delay of 30 days for second drug dosage}
\label{tab:result6}
\end{table}

\newpage 

\begin{figure}[htbp]
	\centering
	\begin{subfigure}{0.30\textwidth}
    	\includegraphics[width=\textwidth]{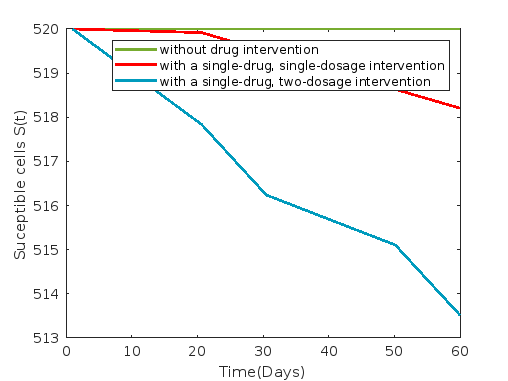}
    	\caption{Graph 1}
    	\label{fig:graph64}
	\end{subfigure}
	\hfill
	\begin{subfigure}{0.30\textwidth}
    	\includegraphics[width=\textwidth]{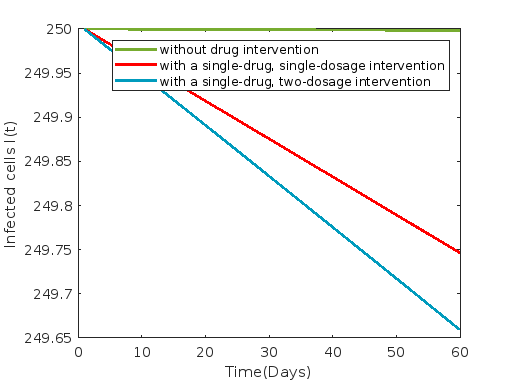}
    	\caption{Graph 2}
    	\label{fig:graph65}
	\end{subfigure}
	\hfill
	\begin{subfigure}{0.30\textwidth}
    	\includegraphics[width=\textwidth]{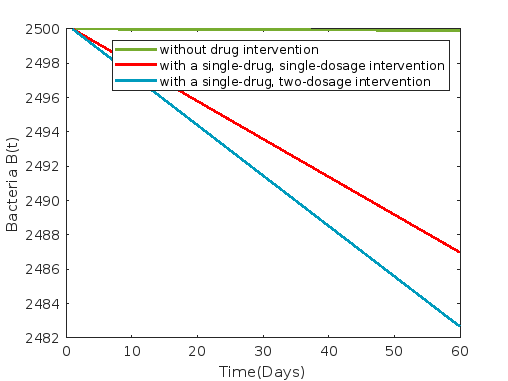}
    	\caption{Graph 3}
    	\label{fig:graph66}
	\end{subfigure}
	\hfill
	\begin{subfigure}{0.30\textwidth}
    	\includegraphics[width=\textwidth]{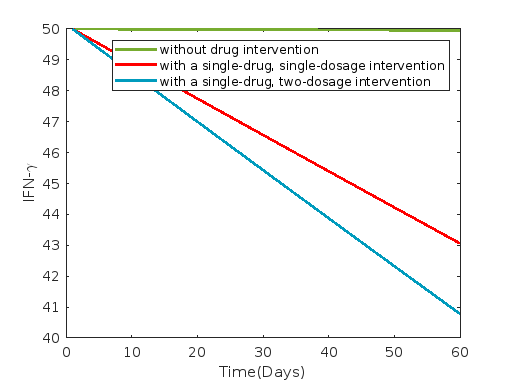}
    	\caption{Graph 4}
    	\label{fig:graph67}
	\end{subfigure}
	\hfill
	\begin{subfigure}{0.30\textwidth}
    	\includegraphics[width=\textwidth]{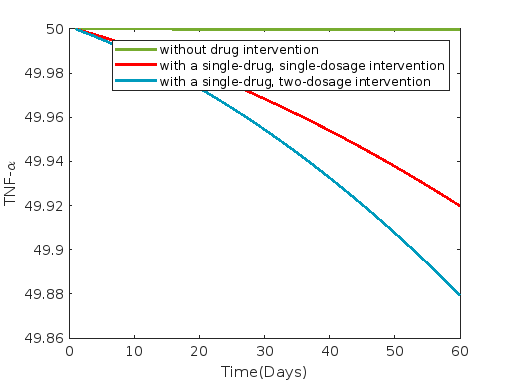}
    	\caption{Graph 5}
    	\label{fig:graph68}
	\end{subfigure}
	\hfill
	\begin{subfigure}{0.30\textwidth}
    	\includegraphics[width=\textwidth]{IL_10_R.png}
    	\caption{Graph 6}
    	\label{fig:graph69}
	\end{subfigure}
	\hfill
	\begin{subfigure}{0.30\textwidth}
    	\includegraphics[width=\textwidth]{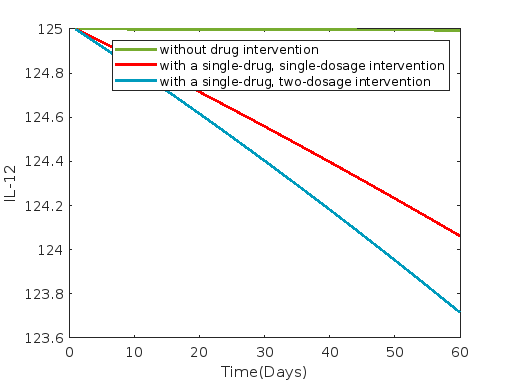}
    	\caption{Graph 7}
    	\label{fig:graph70}
	\end{subfigure}
	\hfill
	\begin{subfigure}{0.30\textwidth}
    	\includegraphics[width=\textwidth]{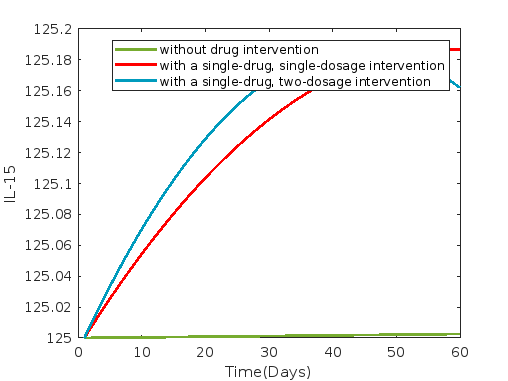}
    	\caption{Graph 8}
    	\label{fig:graph71}
	\end{subfigure}
	\hfill
	\begin{subfigure}{0.30\textwidth}
    	\includegraphics[width=\textwidth]{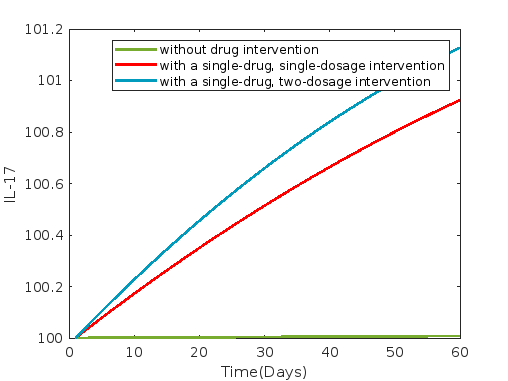}
    	\caption{Graph 9}
    	\label{fig:graph72}
	\end{subfigure}
	\hfill
    \begin{subfigure}{0.30\textwidth}
    	\includegraphics[width=\textwidth]{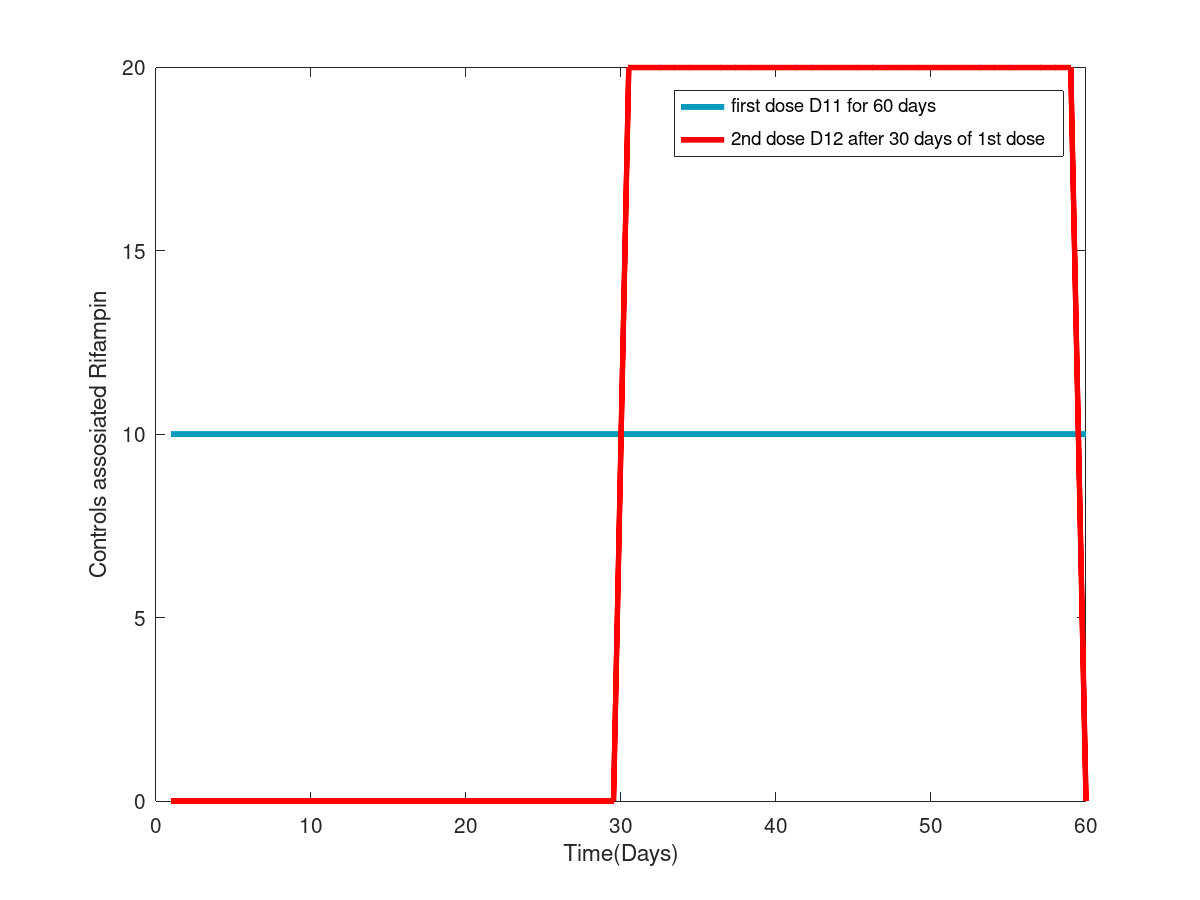}
    	\caption{Graph 10}
    	\label{fig:graph73}
	\end{subfigure}
	\hfill
	\caption{Plots depicting the influence of two dosages of rifampin over a period of 60 days with the second dose being administered on 31st day. }
	\label{fig:rifampin_dly}
\end{figure}

\begin{figure}[htbp]
	\centering
	\begin{subfigure}{0.30\textwidth}
    	\includegraphics[width=\textwidth]{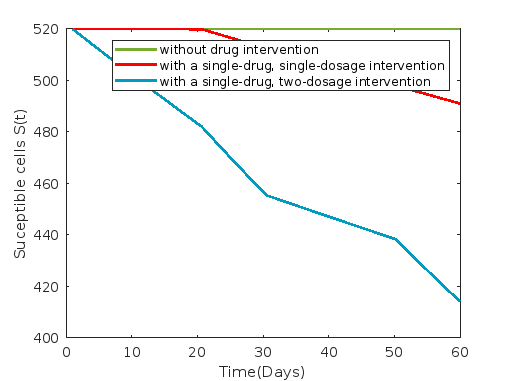}
    	\caption{Graph 1}
    	\label{fig:graph76}
	\end{subfigure}
	\hfill
	\begin{subfigure}{0.30\textwidth}
    	\includegraphics[width=\textwidth]{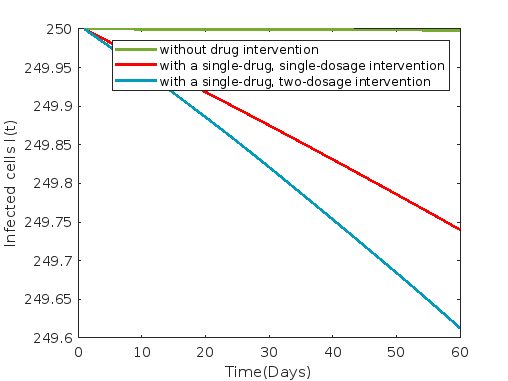}
    	\caption{Graph 2}
    	\label{fig:graph77}
	\end{subfigure}
	\hfill
	\begin{subfigure}{0.30\textwidth}
    	\includegraphics[width=\textwidth]{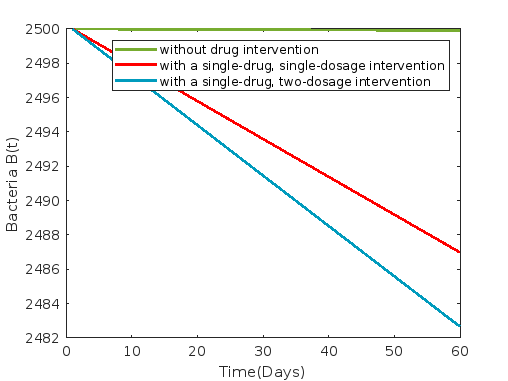}
    	\caption{Graph 3}
    	\label{fig:graph78}
	\end{subfigure}
	\hfill
	\begin{subfigure}{0.30\textwidth}
    	\includegraphics[width=\textwidth]{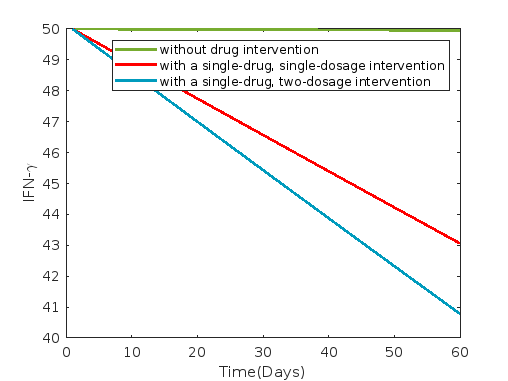}
    	\caption{Graph 4}
    	\label{fig:graph79}
	\end{subfigure}
	\hfill
	\begin{subfigure}{0.30\textwidth}
    	\includegraphics[width=\textwidth]{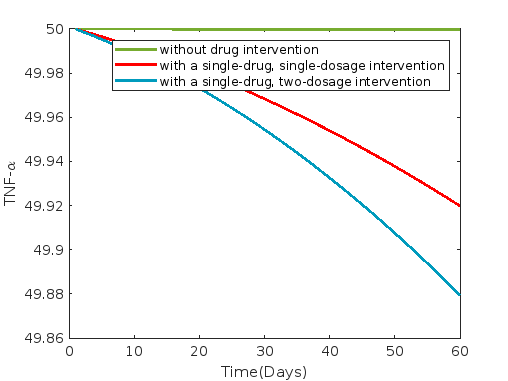}
    	\caption{Graph 5}
    	\label{fig:graph80}
	\end{subfigure}
	\hfill
	\begin{subfigure}{0.30\textwidth}
    	\includegraphics[width=\textwidth]{IL_10_D.png}
    	\caption{Graph 6}
    	\label{fig:graph81}
	\end{subfigure}
	\hfill
	\begin{subfigure}{0.30\textwidth}
    	\includegraphics[width=\textwidth]{IL_12_D.png}
    	\caption{Graph 7}
    	\label{fig:graph82}
	\end{subfigure}
	\hfill
	\begin{subfigure}{0.30\textwidth}
    	\includegraphics[width=\textwidth]{IL_15_D.png}
    	\caption{Graph 8}
    	\label{fig:graph83}
	\end{subfigure}
	\hfill
	\begin{subfigure}{0.30\textwidth}
    	\includegraphics[width=\textwidth]{IL_17_D.png}
    	\caption{Graph 9}
    	\label{fig:graph84}
	\end{subfigure}
	\hfill
    \begin{subfigure}{0.30\textwidth}
    	\includegraphics[width=\textwidth]{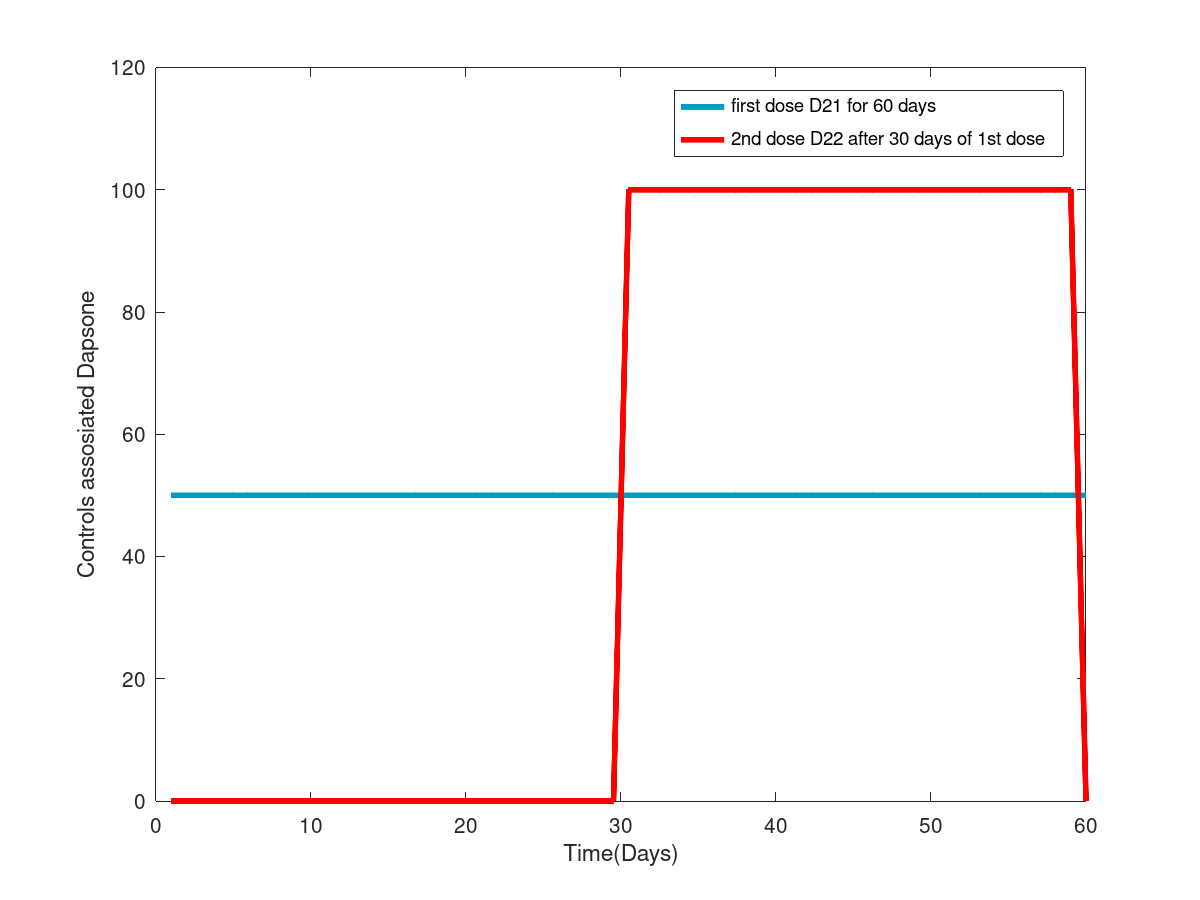}
    	\caption{Graph 10}
    	\label{fig:graph85}
	\end{subfigure}
	\hfill
	\caption{Plots depicting the influence of two dosages of dapsone over a period of 60 days with the second dose being administered on 31st day.}
 	\label{fig:dapsone_dly}
\end{figure}

\begin{figure}[htbp]
	\centering
	\begin{subfigure}{0.30\textwidth}
    	\includegraphics[width=\textwidth]{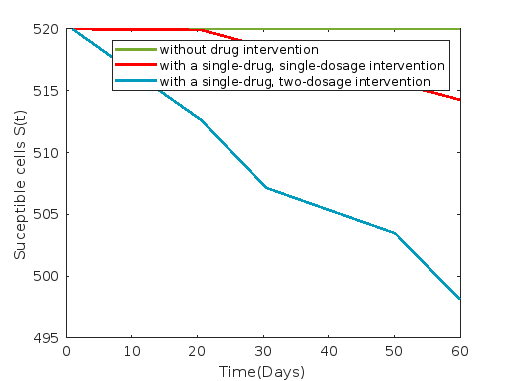}
    	\caption{Graph 1}
    	\label{fig:graph88}
	\end{subfigure}
	\hfill
	\begin{subfigure}{0.30\textwidth}
    	\includegraphics[width=\textwidth]{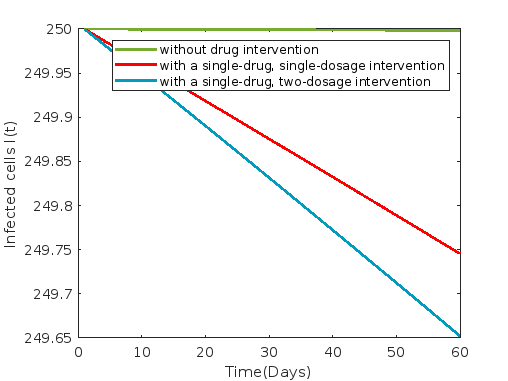}
    	\caption{Graph 2}
    	\label{fig:graph89}
	\end{subfigure}
	\hfill
	\begin{subfigure}{0.30\textwidth}
    	\includegraphics[width=\textwidth]{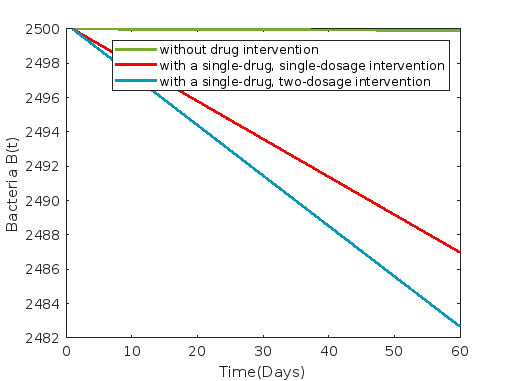}
    	\caption{Graph 3}
    	\label{fig:graph90}
	\end{subfigure}
	\hfill
	\begin{subfigure}{0.30\textwidth}
    	\includegraphics[width=\textwidth]{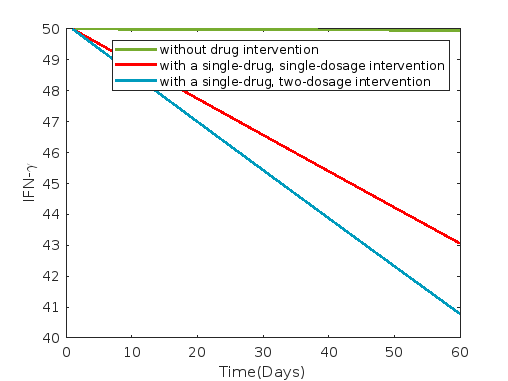}
    	\caption{Graph 4}
    	\label{fig:graph91}
	\end{subfigure}
	\hfill
	\begin{subfigure}{0.30\textwidth}
    	\includegraphics[width=\textwidth]{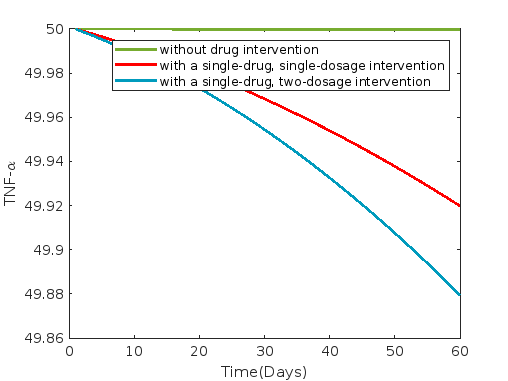}
    	\caption{Graph 5}
    	\label{fig:graph92}
	\end{subfigure}
	\hfill
	\begin{subfigure}{0.30\textwidth}
    	\includegraphics[width=\textwidth]{IL_10_C.png}
    	\caption{Graph 6}
    	\label{fig:graph93}
	\end{subfigure}
	\hfill
	\begin{subfigure}{0.30\textwidth}
    	\includegraphics[width=\textwidth]{IL_12_C.png}
    	\caption{Graph 7}
    	\label{fig:graph94}
	\end{subfigure}
	\hfill
	\begin{subfigure}{0.30\textwidth}
    	\includegraphics[width=\textwidth]{IL_15_C.png}
    	\caption{Graph 8}
    	\label{fig:graph95}
	\end{subfigure}
	\hfill
	\begin{subfigure}{0.30\textwidth}
    	\includegraphics[width=\textwidth]{IL_17_C.png}
    	\caption{Graph 9}
    	\label{fig:graph96}
	\end{subfigure}
	\hfill
    \begin{subfigure}{0.30\textwidth}
    	\includegraphics[width=\textwidth]{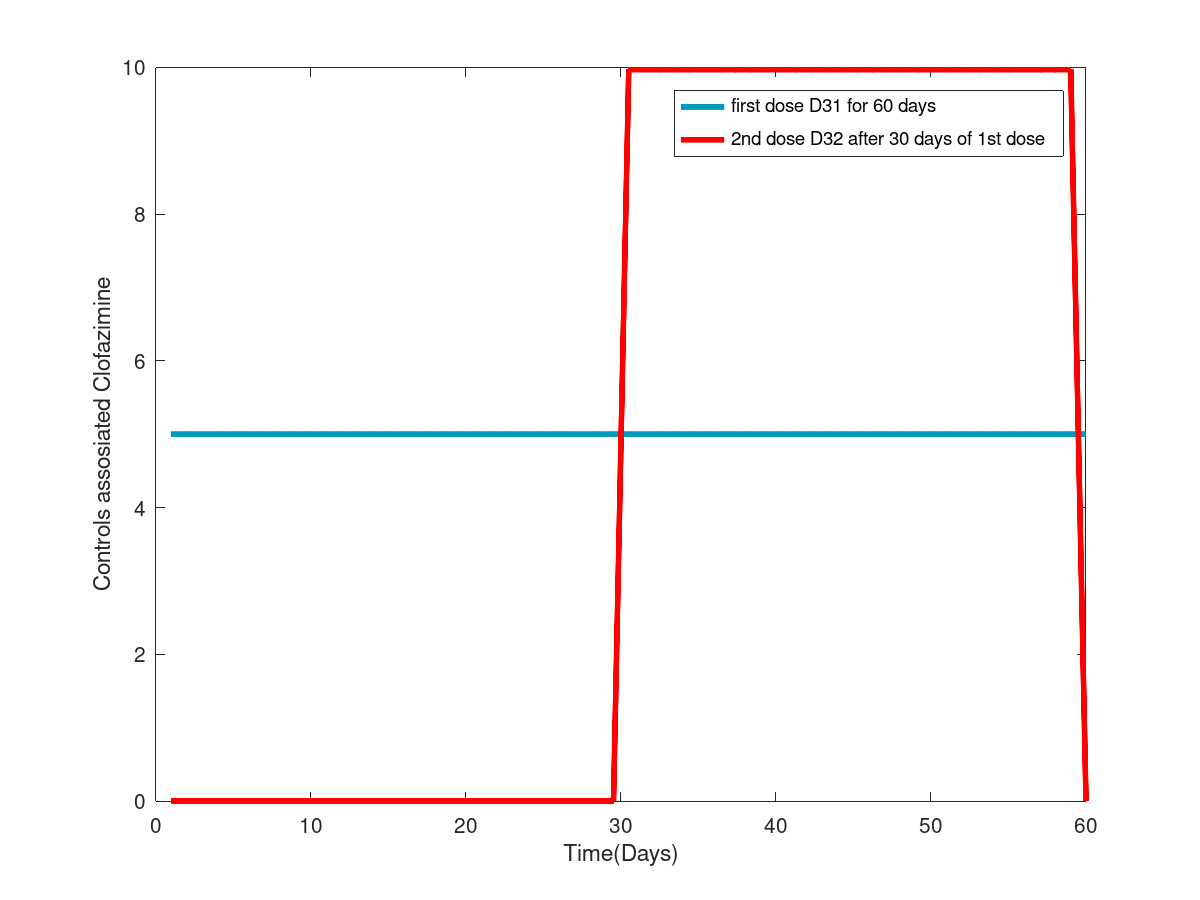}
    	\caption{Graph 10}
    	\label{fig:graph97}
	\end{subfigure}
	\hfill
	\caption{Plots depicting the influence of two dosages of clofazimine over a period of 60 days with the second dose being administered on 31st day. }
	\label{fig:clofazimine_dly}
\end{figure}

\begin{figure}[htbp]
    \centering
    \begin{subfigure}{0.30\textwidth}
        \includegraphics[width=\textwidth]{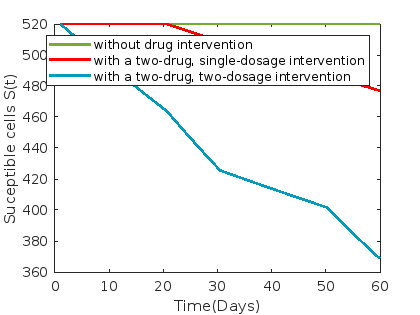}
        \caption{Graph 1}
        \label{fig:graph100}
    \end{subfigure}
    \hfill
    \begin{subfigure}{0.30\textwidth}
        \includegraphics[width=\textwidth]{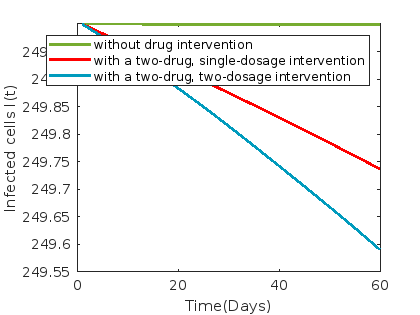}
        \caption{Graph 2}
        \label{fig:graph101}
    \end{subfigure}
    \hfill
    \begin{subfigure}{0.30\textwidth}
        \includegraphics[width=\textwidth]{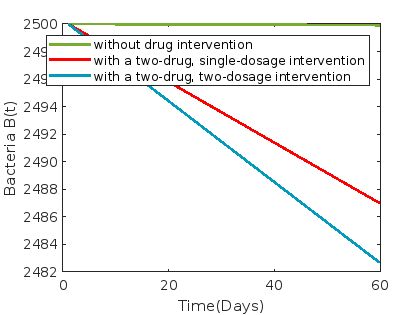}
        \caption{Graph 3}
        \label{fig:graph102}
    \end{subfigure}
    \hfill
    \begin{subfigure}{0.30\textwidth}
        \includegraphics[width=\textwidth]{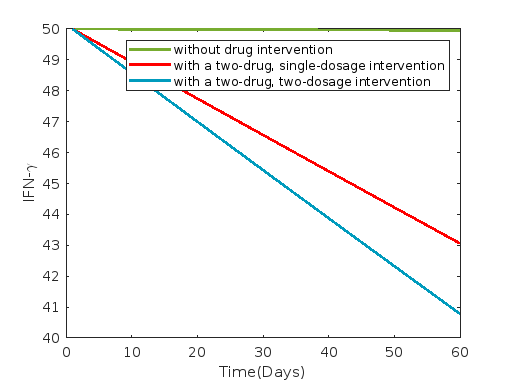}
        \caption{Graph 4}
        \label{fig:graph103}
    \end{subfigure}
    \hfill
    \begin{subfigure}{0.30\textwidth}
        \includegraphics[width=\textwidth]{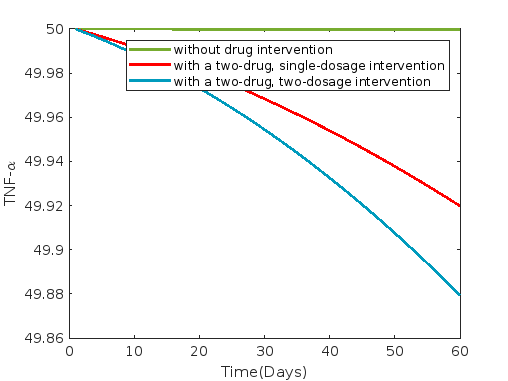}
        \caption{Graph 5}
        \label{fig:graph104}
    \end{subfigure}
    \hfill
    \begin{subfigure}{0.30\textwidth}
        \includegraphics[width=\textwidth]{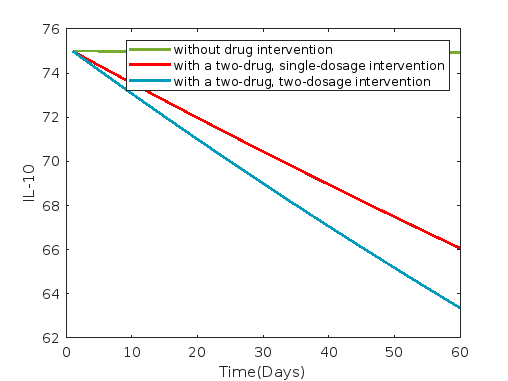}
        \caption{Graph 6}
        \label{fig:graph105}
    \end{subfigure}
    \hfill
    \begin{subfigure}{0.30\textwidth}
        \includegraphics[width=\textwidth]{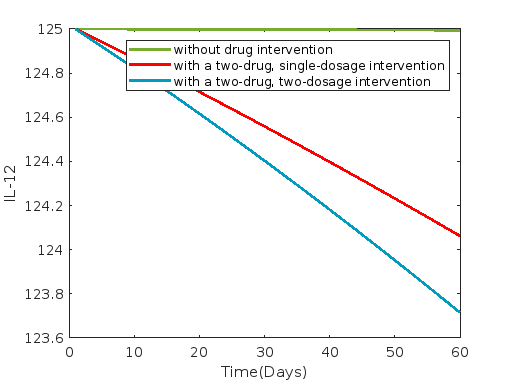}
        \caption{Graph 7}
        \label{fig:graph106}
    \end{subfigure}
    \hfill
    \begin{subfigure}{0.30\textwidth}
        \includegraphics[width=\textwidth]{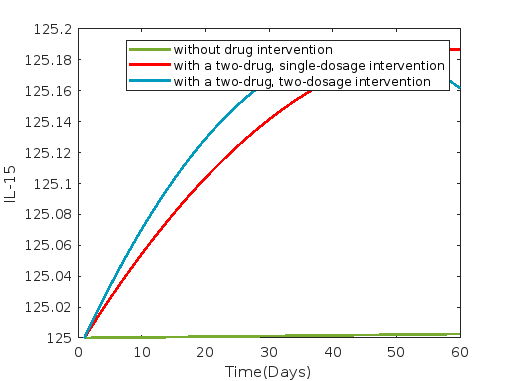}
        \caption{Graph 8}
        \label{fig:graph107}
    \end{subfigure}
    \hfill
    \begin{subfigure}{0.30\textwidth}
        \includegraphics[width=\textwidth]{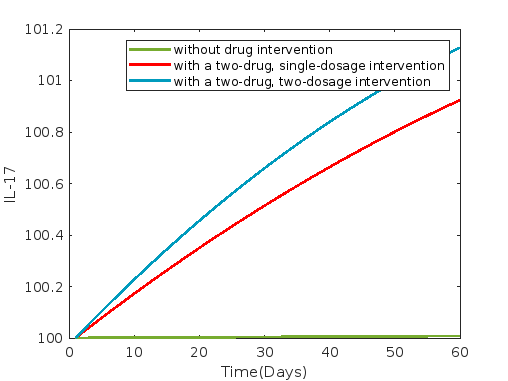}
        \caption{Graph 9}
        \label{fig:graph108}
    \end{subfigure}
    \hfill
    \begin{subfigure}{0.30\textwidth}
        \includegraphics[width=\textwidth]{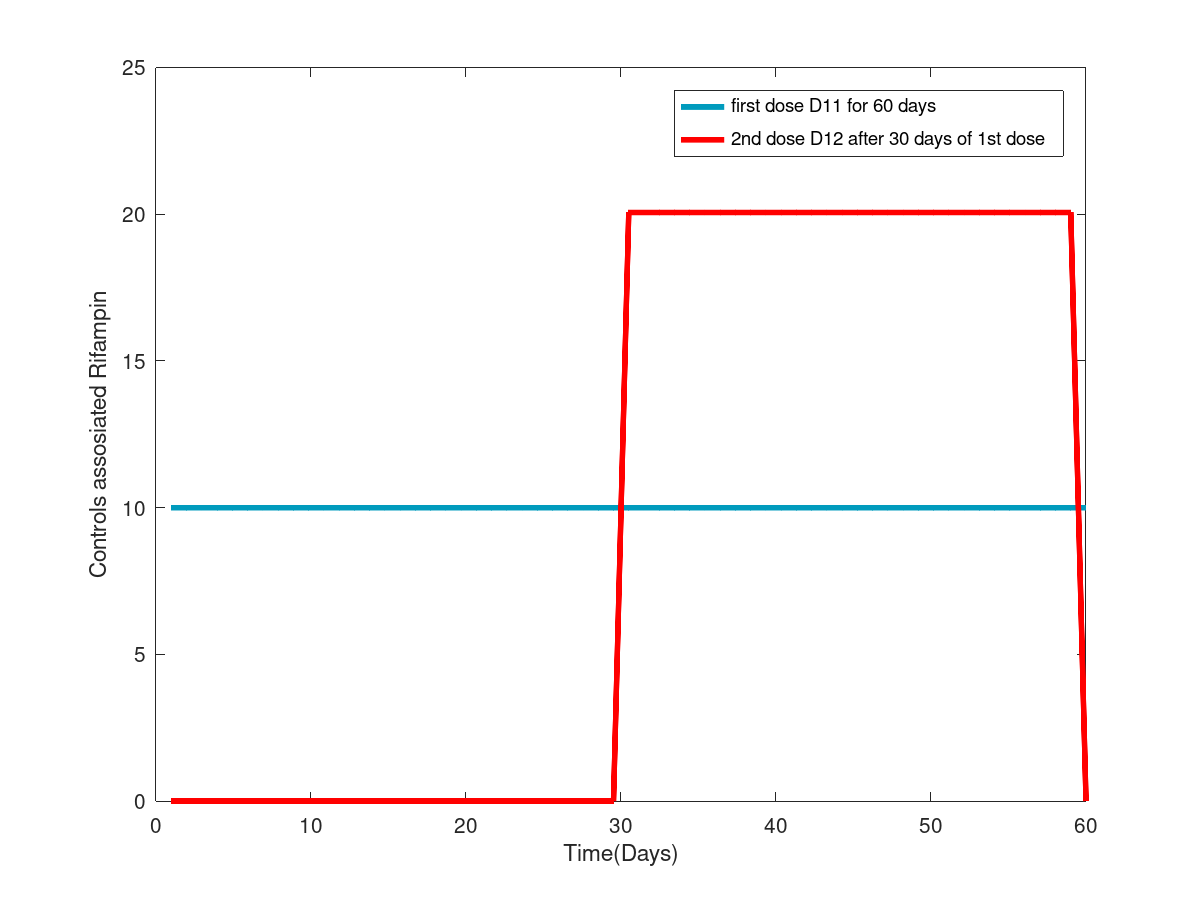}
        \caption{Graph 10}
        \label{fig:graph109}
    \end{subfigure}
    \hfill
    \begin{subfigure}{0.30\textwidth}
        \includegraphics[width=\textwidth]{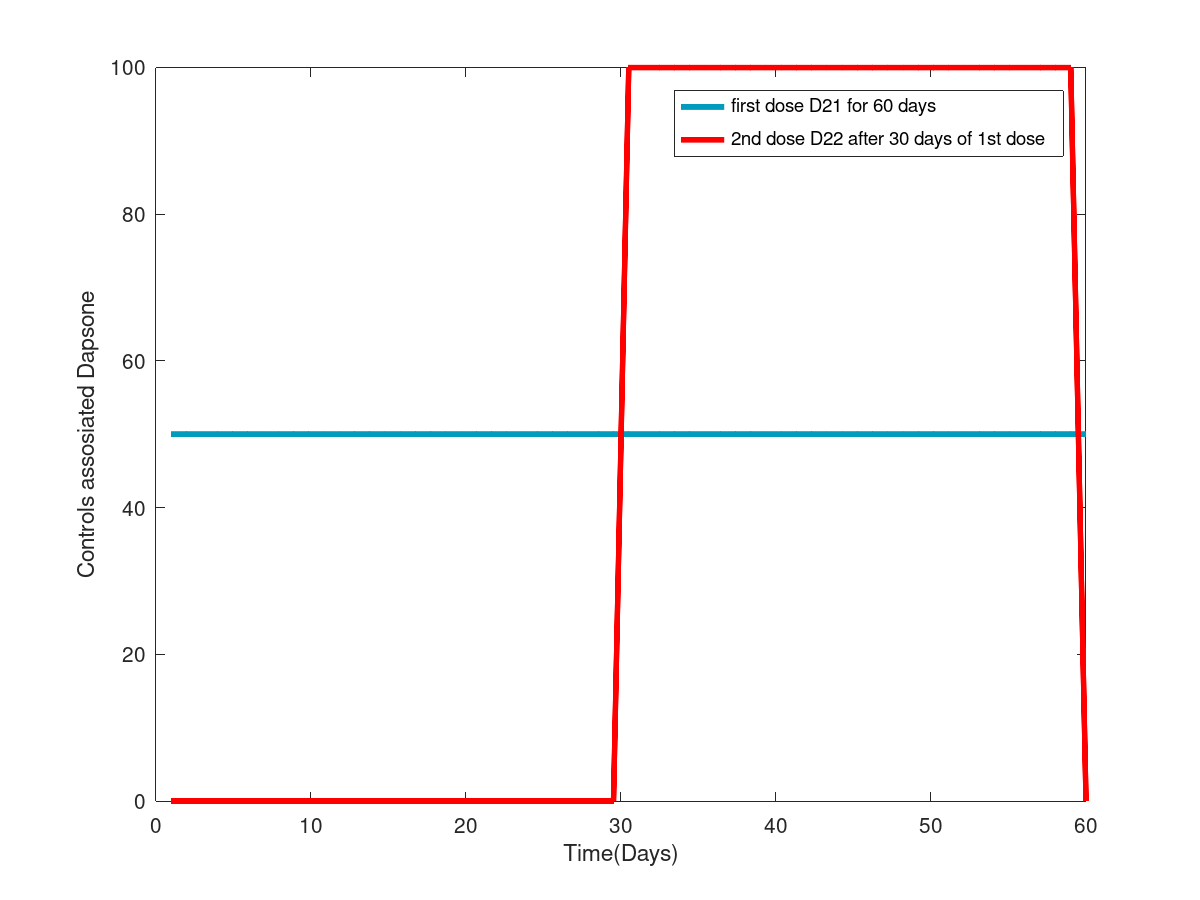}
        \caption{Graph 12}
        \label{fig:graph111}
    \end{subfigure}
    \hfill
    \caption{Plots depicting the combined influence of two dosages of rifampin and dapsone over a period of 60 days with the second dose being administered on 31st day. }
    \label{fig:Rif&Dap_dly}
\end{figure}

\begin{figure}[htbp]
    \centering
    \begin{subfigure}{0.30\textwidth}
        \includegraphics[width=\textwidth]{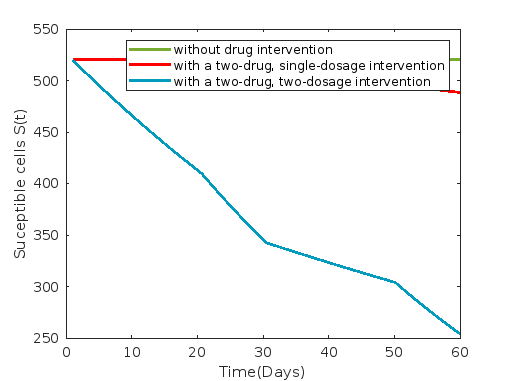}
        \caption{Graph 1}
        \label{fig:graph113}
    \end{subfigure}
    \hfill
    \begin{subfigure}{0.30\textwidth}
        \includegraphics[width=\textwidth]{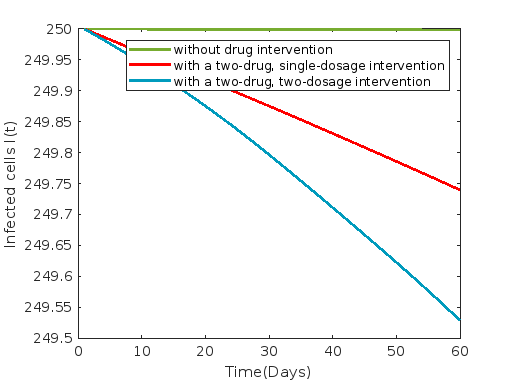}
        \caption{Graph 2}
        \label{fig:graph114}
    \end{subfigure}
    \hfill
    \begin{subfigure}{0.30\textwidth}
        \includegraphics[width=\textwidth]{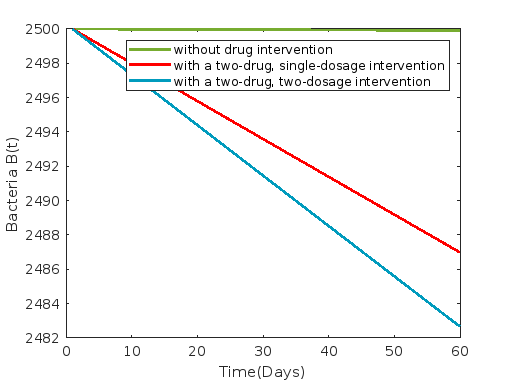}
        \caption{Graph 3}
        \label{fig:graph115}
    \end{subfigure}
    \hfill
    \begin{subfigure}{0.30\textwidth}
        \includegraphics[width=\textwidth]{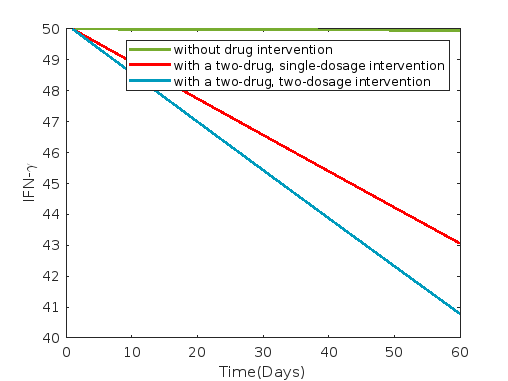}
        \caption{Graph 4}
        \label{fig:graph116}
    \end{subfigure}
    \hfill
    \begin{subfigure}{0.30\textwidth}
        \includegraphics[width=\textwidth]{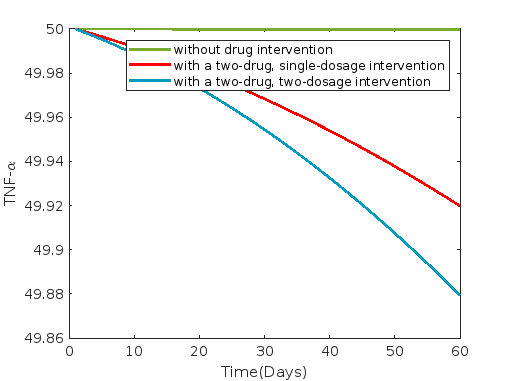}
        \caption{Graph 5}
        \label{fig:graph117}
    \end{subfigure}
    \hfill
    \begin{subfigure}{0.30\textwidth}
        \includegraphics[width=\textwidth]{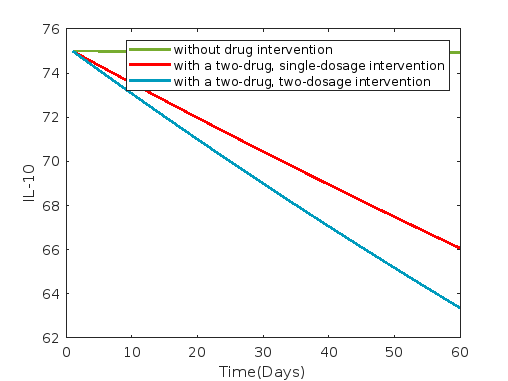}
        \caption{Graph 6}
        \label{fig:graph118}
    \end{subfigure}
    \hfill
    \begin{subfigure}{0.30\textwidth}
        \includegraphics[width=\textwidth]{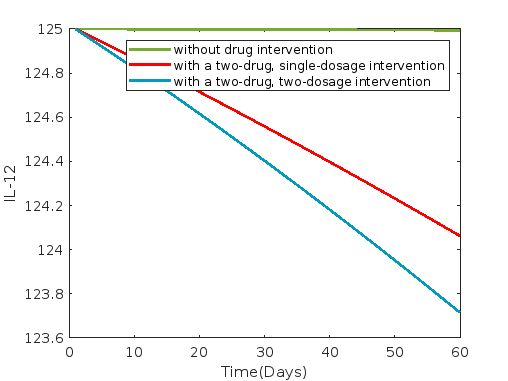}
        \caption{Graph 7}
        \label{fig:graph119}
    \end{subfigure}
    \hfill
    \begin{subfigure}{0.30\textwidth}
        \includegraphics[width=\textwidth]{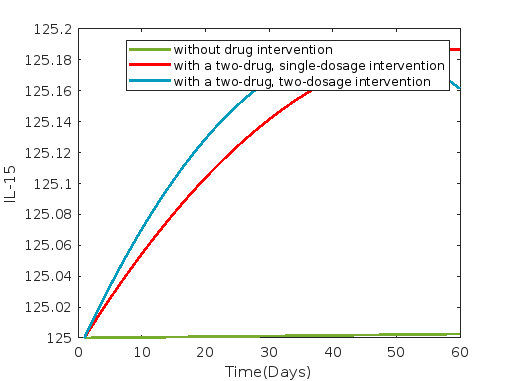}
        \caption{Graph 8}
        \label{fig:graph120}
    \end{subfigure}
    \hfill
    \begin{subfigure}{0.30\textwidth}
        \includegraphics[width=\textwidth]{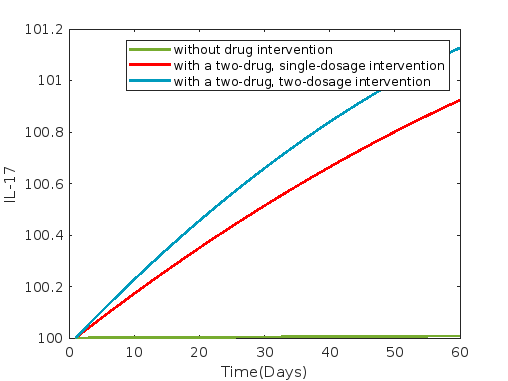}
        \caption{Graph 9}
        \label{fig:graph121}
    \end{subfigure}
    \hfill
    \begin{subfigure}{0.30\textwidth}
        \includegraphics[width=\textwidth]{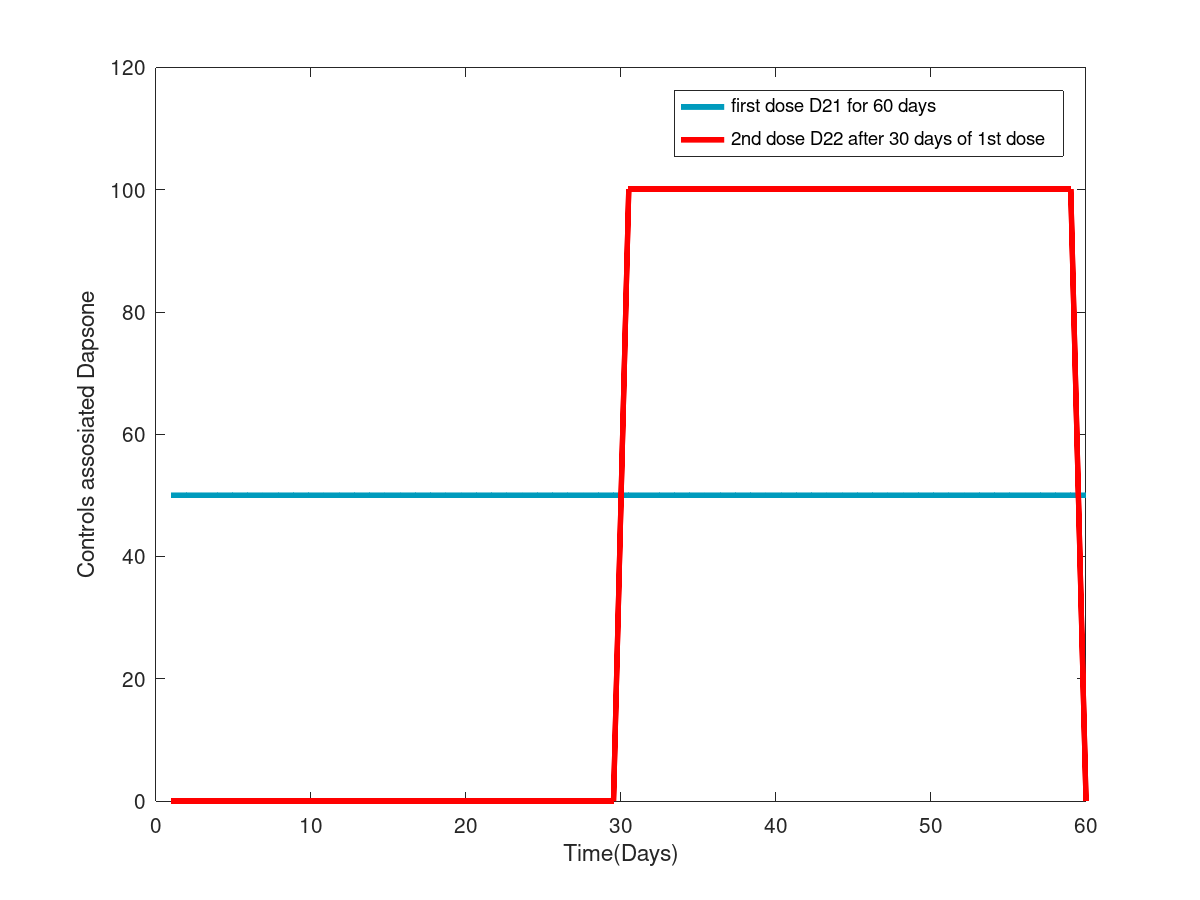}
        \caption{Graph 10}
        \label{fig:graph122}
    \end{subfigure}
    \hfill
    \begin{subfigure}{0.30\textwidth}
        \includegraphics[width=\textwidth]{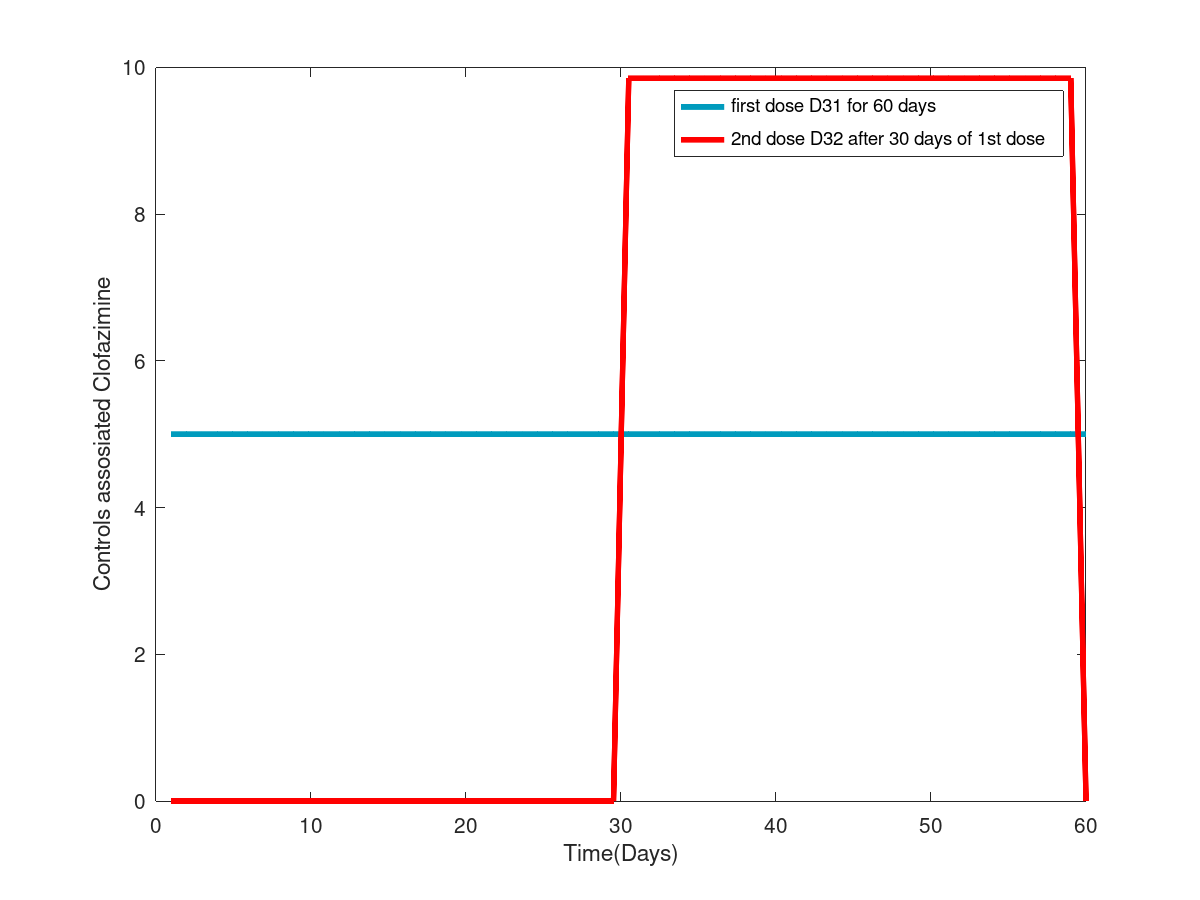}
        \caption{Graph 12}
        \label{fig:graph124}
    \end{subfigure}
    \hfill
    \caption{Plots depicting the combined influence of two dosages of dapsone and clofazimine over a period of 60 days with the second dose being administered on 31st day.} 
     \label{fig:Dap&Clo_dly}
\end{figure}

\begin{figure}[htbp]
    \centering
    \begin{subfigure}{0.30\textwidth}
        \includegraphics[width=\textwidth]{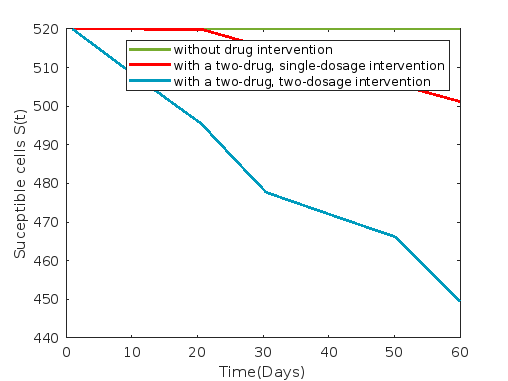}
        \caption{Graph 1}
        \label{fig:graph126}
    \end{subfigure}
    \hfill
    \begin{subfigure}{0.30\textwidth}
        \includegraphics[width=\textwidth]{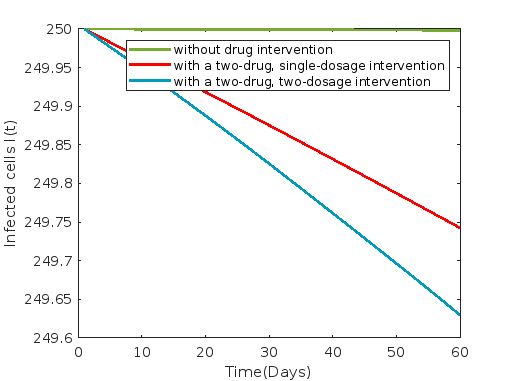}
        \caption{Graph 2}
        \label{fig:graph127}
    \end{subfigure}
    \hfill
    \begin{subfigure}{0.30\textwidth}
        \includegraphics[width=\textwidth]{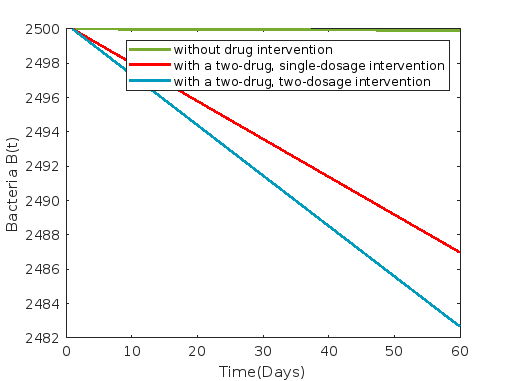}
        \caption{Graph 3}
        \label{fig:graph128}
    \end{subfigure}
    \hfill
    \begin{subfigure}{0.30\textwidth}
        \includegraphics[width=\textwidth]{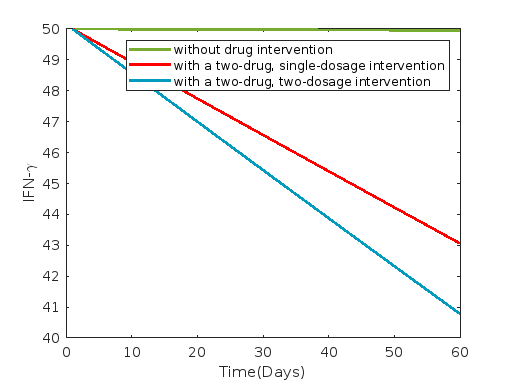}
        \caption{Graph 4}
        \label{fig:graph129}
    \end{subfigure}
    \hfill
    \begin{subfigure}{0.30\textwidth}
        \includegraphics[width=\textwidth]{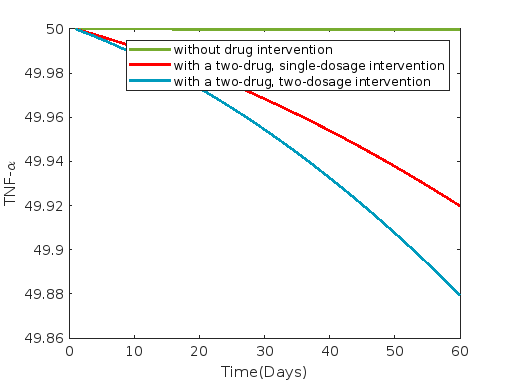}
        \caption{Graph 5}
        \label{fig:graph130}
    \end{subfigure}
    \hfill
    \begin{subfigure}{0.30\textwidth}
        \includegraphics[width=\textwidth]{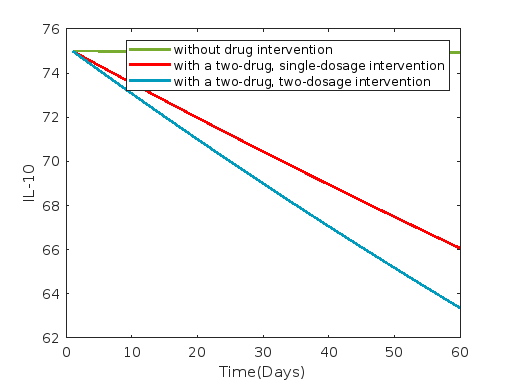}
        \caption{Graph 6}
        \label{fig:graph131}
    \end{subfigure}
    \hfill
    \begin{subfigure}{0.30\textwidth}
        \includegraphics[width=\textwidth]{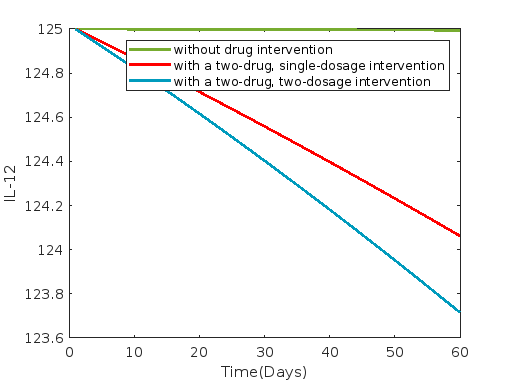}
        \caption{Graph 7}
        \label{fig:graph132}
    \end{subfigure}
    \hfill
    \begin{subfigure}{0.30\textwidth}
        \includegraphics[width=\textwidth]{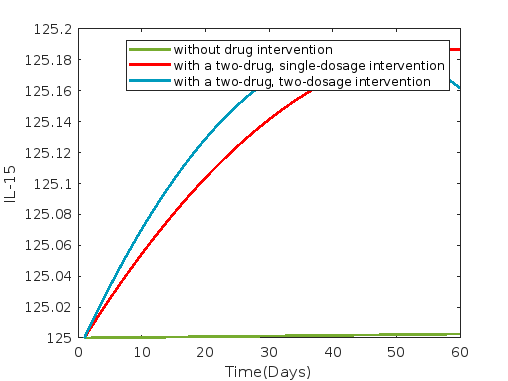}
        \caption{Graph 8}
        \label{fig:graph133}
    \end{subfigure}
    \hfill
    \begin{subfigure}{0.30\textwidth}
        \includegraphics[width=\textwidth]{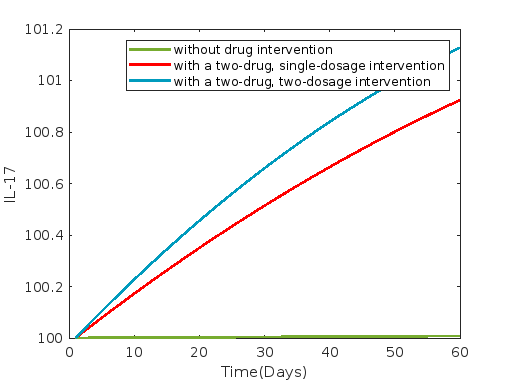}
        \caption{Graph 9}
        \label{fig:graph134}
    \end{subfigure}
    \hfill
     \begin{subfigure}{0.30\textwidth}
        \includegraphics[width=\textwidth]{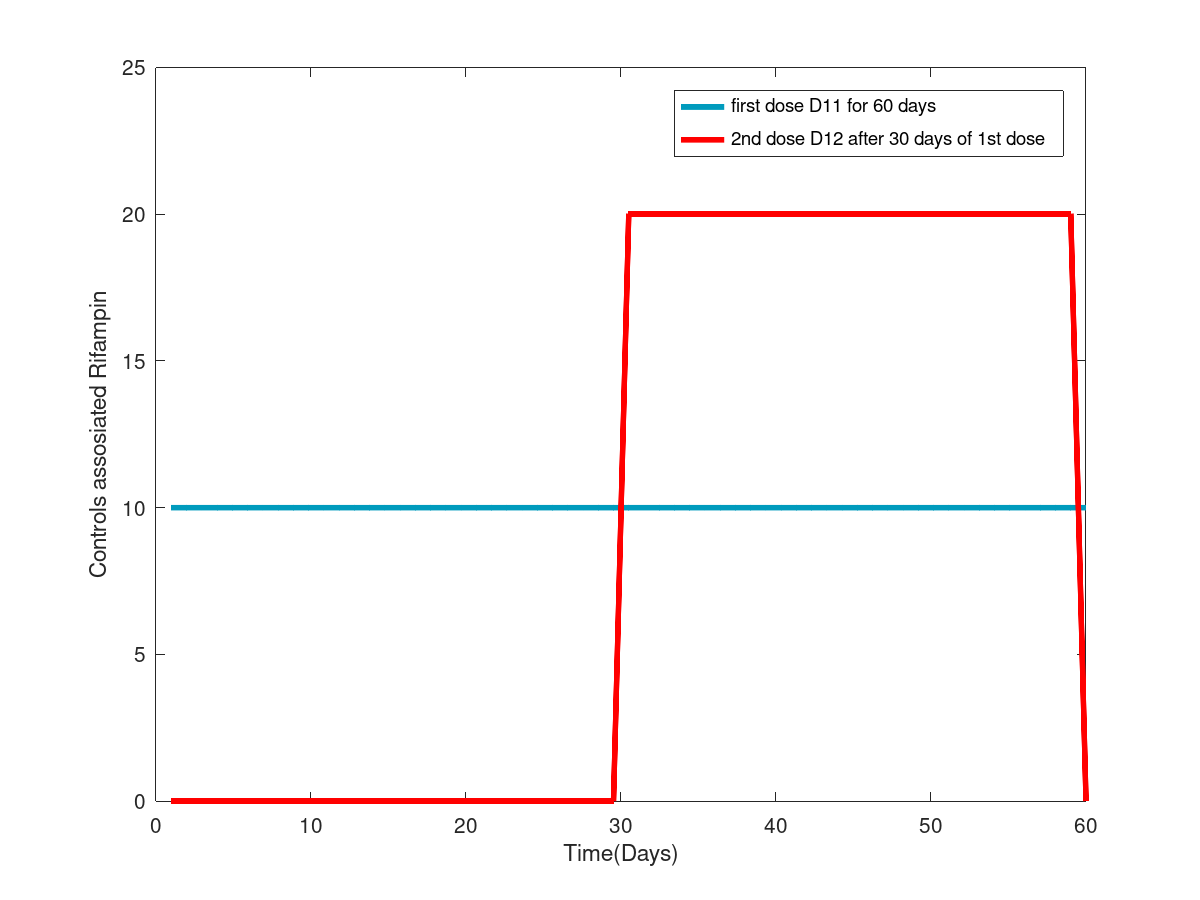}
        \caption{Graph 10}
        \label{fig:graph135}
    \end{subfigure}
    \hfill
     \begin{subfigure}{0.30\textwidth}
        \includegraphics[width=\textwidth]{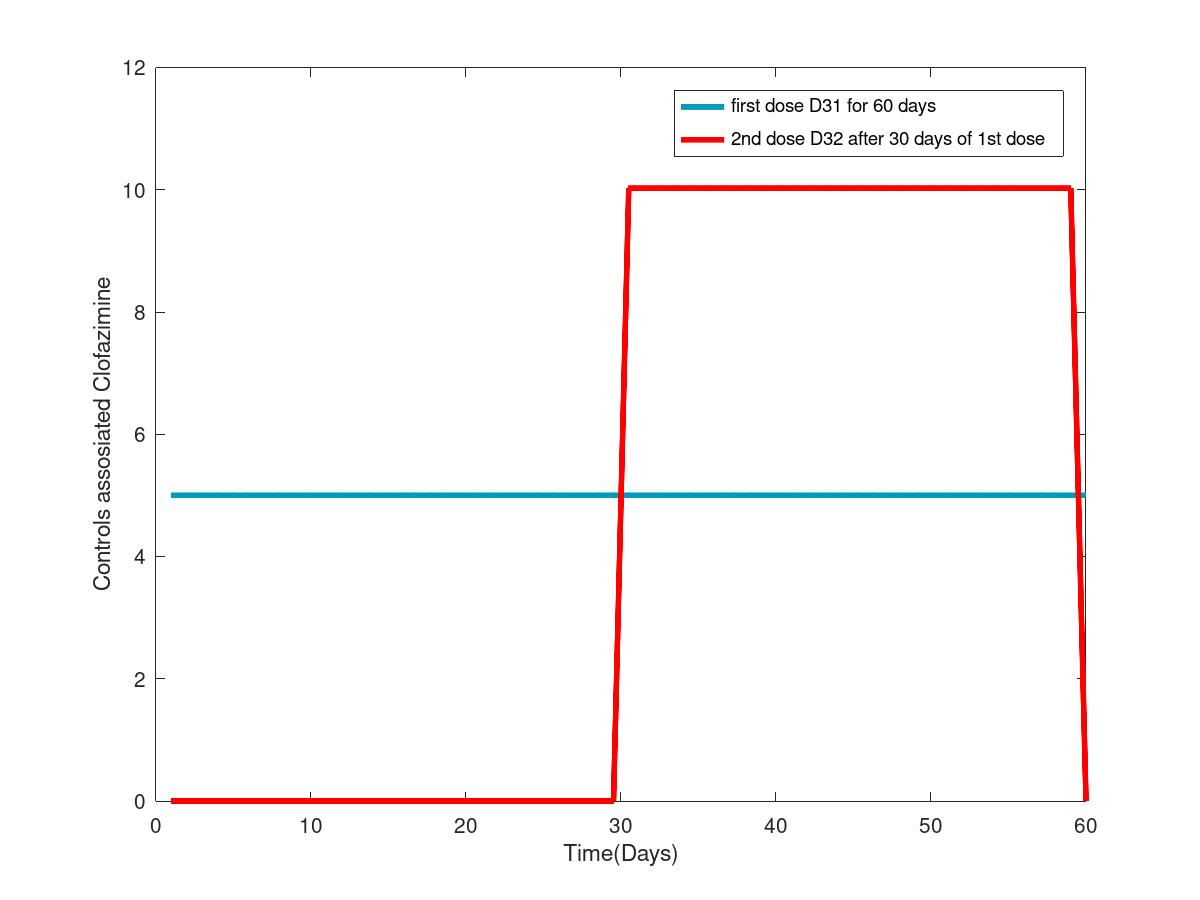}
        \caption{Graph 12}
        \label{fig:graph137}
    \end{subfigure}
    \hfill
    \caption{Plots depicting the combined influence of two dosages of    clofazimine and rifampin over a period of 60 days with the second dose being administered on 31st day. }
    \label{fig:Clo&Rif_dly}
\end{figure}

\begin{figure}[htbp]
    \centering
    \begin{subfigure}{0.30\textwidth}
        \includegraphics[width=\textwidth]{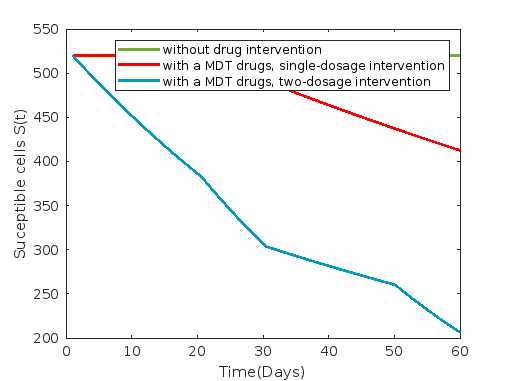}
        \caption{Graph 1}
        \label{fig:graph139}
    \end{subfigure}
    \hfill
    \begin{subfigure}{0.30\textwidth}
        \includegraphics[width=\textwidth]{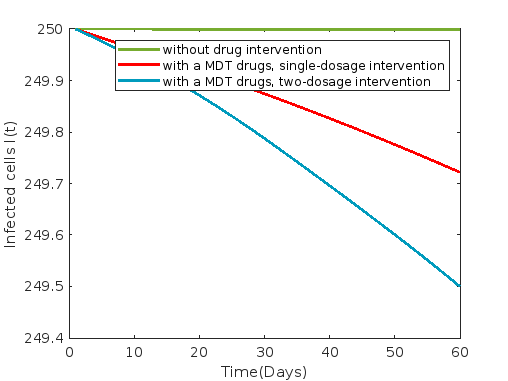}
        \caption{Graph 2}
        \label{fig:graph140}
    \end{subfigure}
    \hfill
    \begin{subfigure}{0.30\textwidth}
        \includegraphics[width=\textwidth]{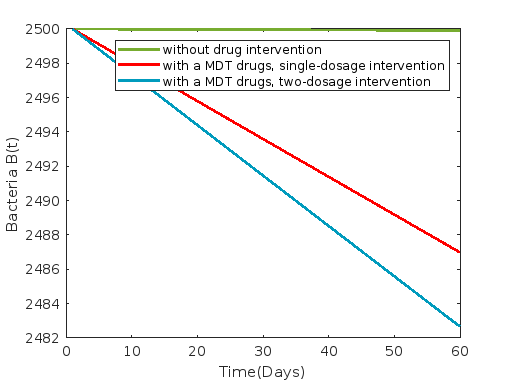}
        \caption{Graph 3}
        \label{fig:graph141}
    \end{subfigure}
    \hfill
    \begin{subfigure}{0.30\textwidth}
        \includegraphics[width=\textwidth]{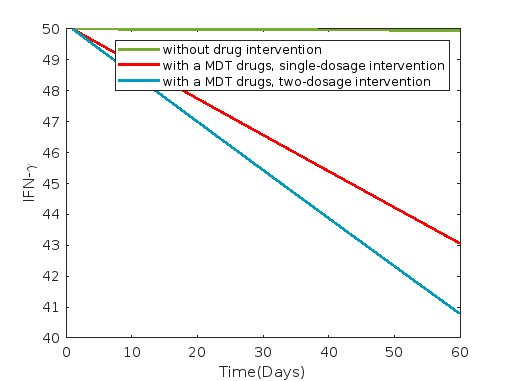}
        \caption{Graph 4}
        \label{fig:graph142}
    \end{subfigure}
    \hfill
    \begin{subfigure}{0.30\textwidth}
        \includegraphics[width=\textwidth]{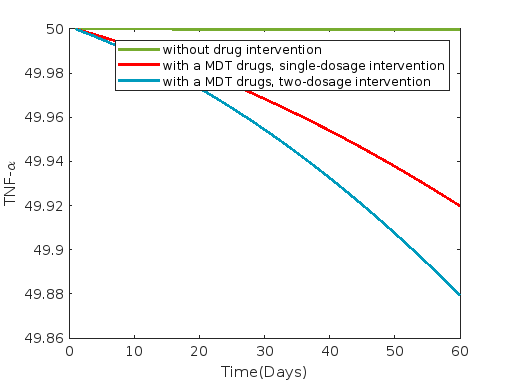}
        \caption{Graph 5}
        \label{fig:graph143}
    \end{subfigure}
    \hfill
    \begin{subfigure}{0.30\textwidth}
        \includegraphics[width=\textwidth]{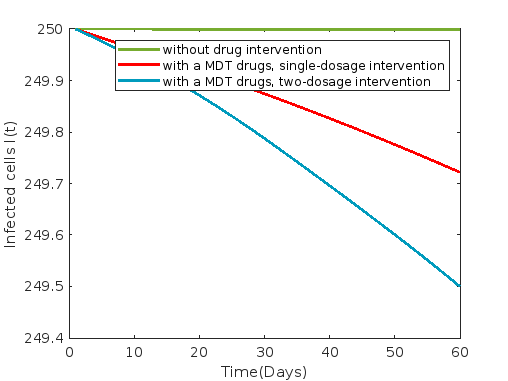}
        \caption{Graph 6}
        \label{fig:graph144}
    \end{subfigure}
    \hfill
    \begin{subfigure}{0.30\textwidth}
        \includegraphics[width=\textwidth]{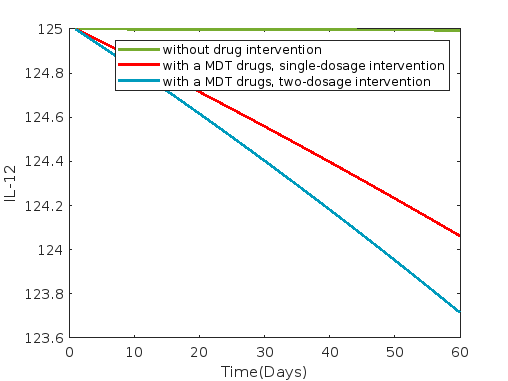}
        \caption{Graph 7}
        \label{fig:graph145}
    \end{subfigure}
    \hfill
    \begin{subfigure}{0.30\textwidth}
        \includegraphics[width=\textwidth]{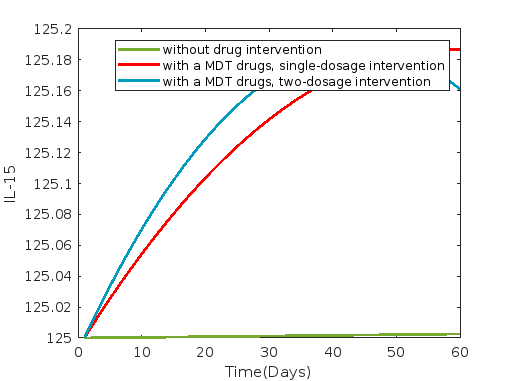}
        \caption{Graph 8}
        \label{fig:graph146}
    \end{subfigure}
    \hfill
    \begin{subfigure}{0.30\textwidth}
        \includegraphics[width=\textwidth]{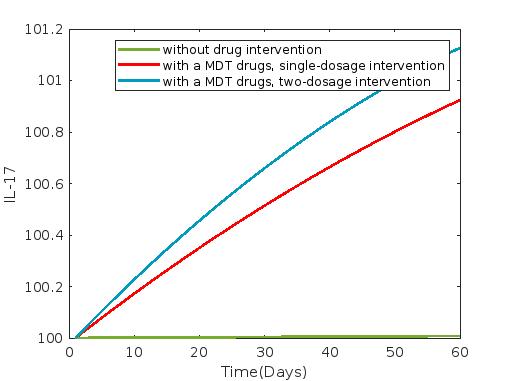}
        \caption{Graph 9}
        \label{fig:graph147}
    \end{subfigure}
    \hfill
    \begin{subfigure}{0.30\textwidth}
        \includegraphics[width=\textwidth]{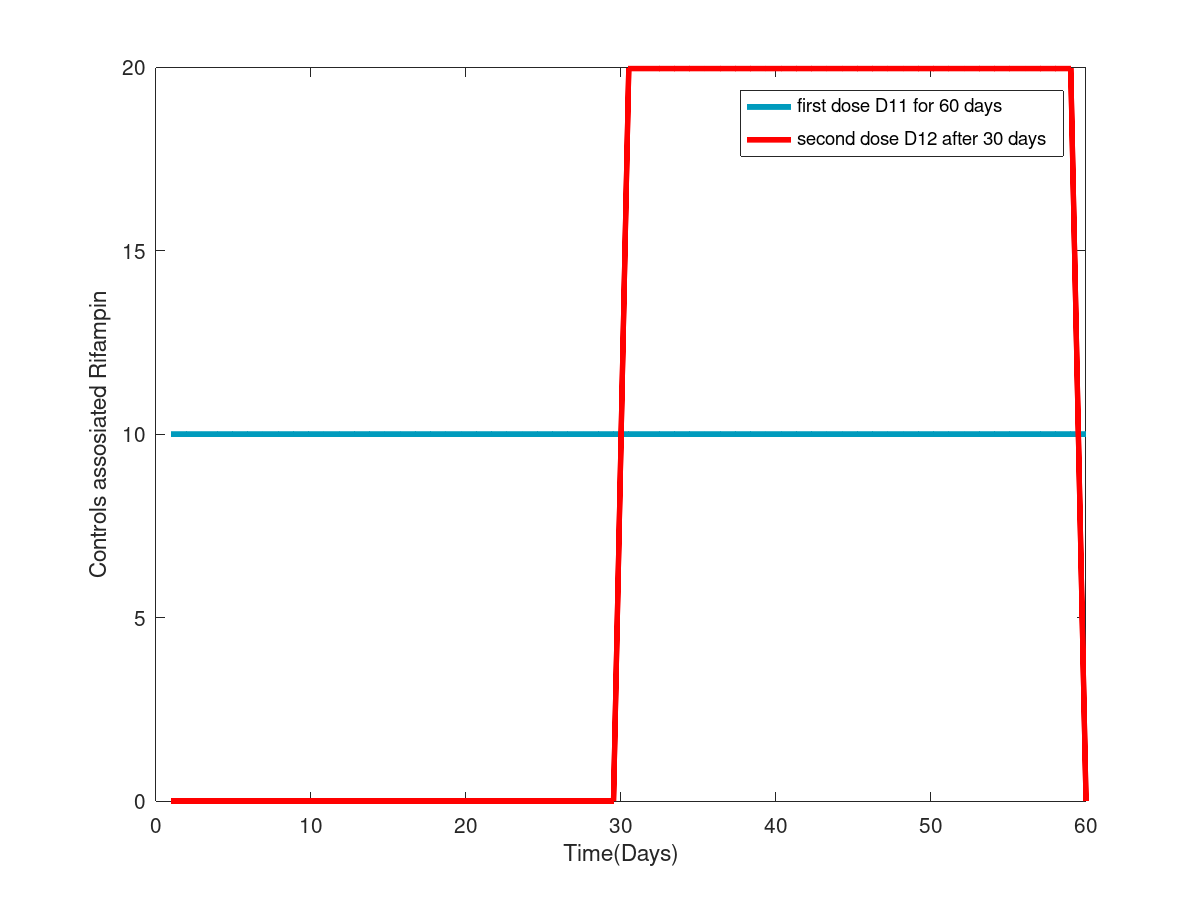}
        \caption{Graph 10}
        \label{fig:graph148}
    \end{subfigure}
    \hfill
    \begin{subfigure}{0.30\textwidth}
        \includegraphics[width=\textwidth]{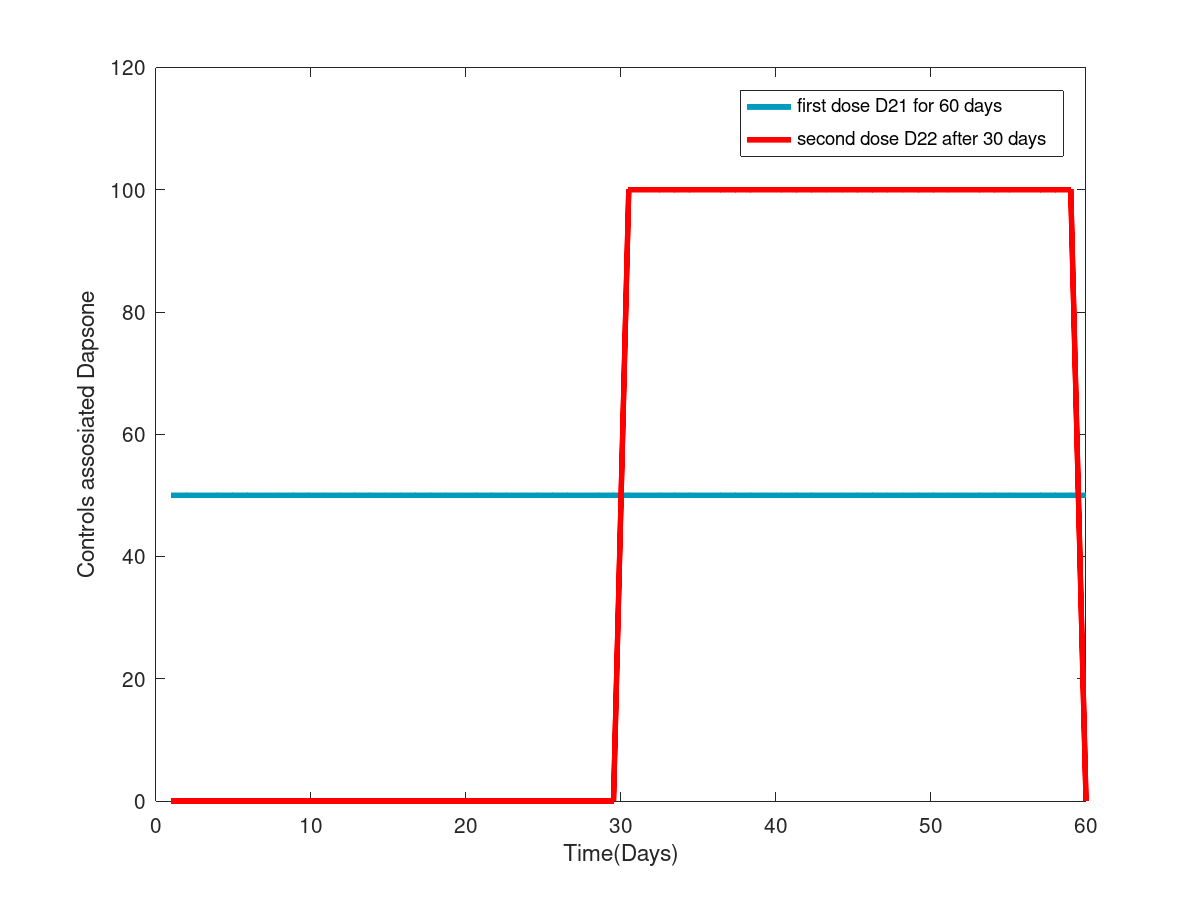}
        \caption{Graph 11}
        \label{fig:graph149}
    \end{subfigure}
    \hfill
    \begin{subfigure}{0.30\textwidth}
        \includegraphics[width=\textwidth]{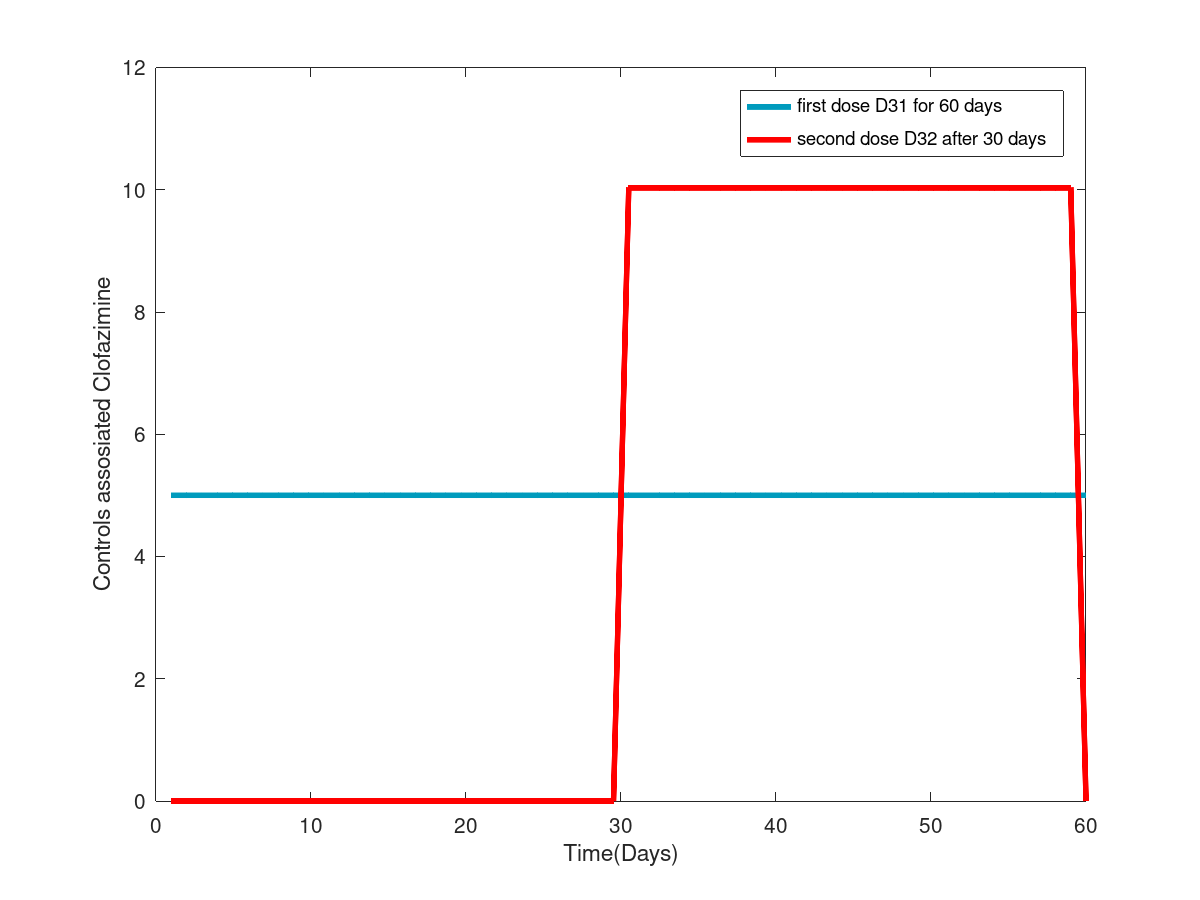}
        \caption{Graph 12}
        \label{fig:graph150}
    \end{subfigure}
    \hfill
     \caption{Plots depicting the combined influence of two dosages of MDT drugs rifampin, clofazimine and dapsone over a period of 60 days with the second dose being administered on 31st day. }
       \label{fig:mdt_dly}
\end{figure}

\newpage

\begin{table}[htbp]
   \centering
    \begin{tabular}{|c|c|c|c|c|}
    \hline
        \textbf{compartments} & \textbf{without drugs } & \textbf{with rifampin } & \textbf{with dapsone } & \textbf{with clofazimine }\\  
        \hline 
$S(t)$ & 519.999174   & 516.754182 & 465.067903 & 508.975006 \\
\hline
$I(t)$ & 249.999158  & 249.830213 & 249.813952 & 249.827797 \\
\hline
$B(t)$ & 2499.956501  & 2491.320879 & 2491.320869 & 2491.320878 \\
\hline
$I_{\gamma}(t)$ & 49.976625  &  45.368405 & 45.368540 & 45.368425 \\
\hline
$T_{\alpha}(t)$ & 49.999836 & 49.948694  & 49.948685 & 49.948693 \\
\hline
$I_{10}(t)$ & 74.968042 & 68.999333  &  68.999308 & 68.999329 \\
\hline
$I_{12}(t)$ & 124.997136 & 124.380295  & 124.380272 & 124.380292\\
\hline
$I_{15}(t)$ &  125.001285 & 125.138320 & 125.138267 & 125.138313 \\
\hline
$I_{17}(t)$ & 100.003874 & 100.632488 & 100.632426 & 100.632479\\
\hline
\end{tabular}
\caption{Average compartments values on administration of two dosages of  rifampin, dapsone, clofazimine over 60-days period with second dosage administered  on  31st day.}
\label{tab:avg1_dly}
\end{table}

\begin{table}[htbp]
   \centering
    \begin{tabular}{|c|c|c|c|c|}
    \hline
        \textbf{compartments} & \textbf{without drugs } & \textbf{with rifampin } & \textbf{with dapsone } & \textbf{with clofazimine }\\  
        \hline
$S(t)$ & 519.998376   & 513.676795 & 416.540017 & 498.630305 \\
\hline
$I(t)$ & 249.998344  & 249.664919 & 249.620094  & 249.658183 \\
\hline
$B(t)$ & 2499.914452  & 2482.950034 & 2482.949997 & 2482.950028 \\
\hline
$I_{\gamma}(t)$ & 49.954030  &  40.931982  & 40.932477 & 40.932056  \\
\hline
$T_{\alpha}(t)$ & 49.999677 & 49.882312  & 49.882282 & 49.882308 \\
\hline
$I_{10}(t)$ & 74.937158 & 63.540457  &  63.540364 & 63.540443 \\
\hline
$I_{12}(t)$ & 124.994367 & 123.738840  & 123.738757 & 123.738828 \\
\hline
$I_{15}(t)$ &  125.002524 & 125.164927 & 125.164738 & 125.164899 \\
\hline
$I_{17}(t)$ & 100.007615 & 101.115553 & 101.115334 & 101.115521 \\
\hline
\end{tabular}
\caption{60th-day compartments values on administration of two dosages of  rifampin, dapsone, clofazimine over 60-days period with second dosage administered  on  31st day.}
\label{tab:last1_dly}
\end{table}

\begin{table}[htbp]
   \centering
    \begin{tabular}{|c|c|c|c|c|}
    \hline
        \textbf{compartments} & \textbf{without drugs} & \textbf{rifampin, dapsone } & \textbf{dapsone, clofazamine } & \textbf{clofazimine, rifampin } \\  
        \hline
$S(t)$ & 519.999174   & 439.854184 & 370.958732  & 483.892621 \\
\hline
$I(t)$ & 249.999158  & 249.805823 & 249.782830 & 249.819934 \\
\hline
$B(t)$ & 2499.956501  & 2491.320864 & 2491.320850 & 2491.320873 \\ 
\hline
$I_{\gamma}(t)$ & 49.976625  &  45.368608 & 45.368805 & 45.368490 \\
\hline
$T_{\alpha}(t)$ & 49.999836 & 49.948681  & 49.948669 & 49.948689 \\
\hline
$I_{10}(t)$ & 74.968042 & 68.999295  &  68.999257 & 68.999317 \\
\hline
$I_{12}(t)$ & 124.997136 & 124.380260  & 124.380225 & 124.380280 \\
\hline
$I_{15}(t)$ &  125.001285 & 125.138240 & 125.138162 & 125.138287 \\
\hline
$I_{17}(t)$ & 100.003874 & 100.632395 & 100.632304 & 100.632449 \\
\hline
\end{tabular}
\caption{Average compartments values on administration of two dosages of  rifampin \& dapsone, dapsone \& clofazamine and clofazamine \& rifampin over 60-days period with second dosage administered  on  31st day.}
\label{tab:avg2_dly}
\end{table}

\begin{table}[htbp]
   \centering
    \begin{tabular}{|c|c|c|c|c|}
    \hline
        \textbf{compartments} & \textbf{without drugs } & \textbf{rifampin, dapsone } & \textbf{dapsone, clofazamine } & \textbf{clofazimine, rifampin } \\    
        \hline
$S(t)$ & 519.998376   & 371.691390 & 258.561537 & 451.132838 \\
\hline
$I(t)$ & 249.998344  & 249.598166 & 249.538020  & 249.636439 \\
\hline
$B(t)$ & 2499.914452  & 2482.949978 & 2482.949926 & 2482.950010 \\
\hline
$I_{\gamma}(t)$ & 49.954030  &  40.932724  & 40.933426 & 40.932295  \\
\hline
$T_{\alpha}(t)$ & 49.999677 & 49.882267  & 49.882224 & 49.882293 \\
\hline
$I_{10}(t)$ & 74.937158 & 63.540317  &  63.540186 & 63.540398 \\
\hline
$I_{12}(t)$ & 124.994367 & 123.738715  & 123.738597 & 123.738787 \\
\hline
$I_{15}(t)$ &  125.002524 & 125.164643 & 125.164375 & 125.164808 \\
\hline
$I_{17}(t)$ & 100.007615 & 101.115224 & 101.114913 & 101.115415 \\
\hline
\end{tabular}
\caption{60th-day compartments values on administration of two dosages of  rifampin \& dapsone, dapsone \& clofazamine and clofazamine \& rifampin over 60-days period with second dosage administered  on  31st day.}
\label{tab:last2_dly}
\end{table}

\begin{table}[htbp]
    \centering
    \begin{tabular}{|c|c|c|c|c|}
        \hline
        \multirow{2}{*}{\textbf{compartments}} & \multicolumn{2}{c|}{\textbf{without drugs}} & \multicolumn{2}{c|}{\textbf{rifampin, dapsone and clofazimine }} \\
        \cline{2-5}
         &  Average & 60th day & Average & 60th day \\
        \hline   
        $S(t)$ & 519.999174   & 519.998376  & 338.956016 & 211.188822 \\ 
        \hline
       $I(t)$ & 249.999158  &  249.998344 & 249.771687 & 249.509958 \\ 
       \hline
       $B(t)$ & 2499.956501  & 2499.914452 & 2491.320842 & 2482.949901 \\
       \hline
       $I_{\gamma}(t)$ & 49.976625  &  49.954030 & 45.368903 & 40.933768\\ 
       \hline
       $T_{\alpha}(t)$ & 49.999836 & 49.999677 & 49.948662 & 49.882203 \\
       \hline
       $I_{10}(t)$ & 74.968042 & 74.937158 & 68.999239  & 63.540122 \\
       \hline
       $I_{12}(t)$ & 124.997136 & 124.994367 & 124.380208  & 123.738539 \\ 
       \hline
       $I_{15}(t)$ & 125.001285 & 125.002524 & 125.138123 & 125.164245  \\
       \hline
       $I_{17}(t)$ & 100.003874 & 100.007615 & 100.632259 & 101.114762 \\
       \hline
    \end{tabular}
    \caption{ Average and 60th day compartments values on administration of two dosages of  all MDT drug over 60-days period with second dosage administered  on  31st day.}
    \label{tab:Avg&last3_dly}
\end{table}

\newpage

\section{Discussions and Conclusions} \label{sec6}

The present work is novel  and first of its kind dealing with  the dynamics dealing with the levels of crucial bio-markers that are involved in Type 1 lepra reaction and their quantitative correlations with the MDT drugs along with the optimal dosages. \\

 We have explored these correlations for administration of  drugs in two  dosages for the MDT drugs namely rifampin, clofazimine \& dapsone with respect to individual and combined implementation. These scenarios have been numerically simulated and the findings have been extensively discussed. These study also explored the optimal drug dosages for administration and it was found out  that the optimal
drug dosage of the MDT drugs found through these optimal control studies and the dosage prescribed as per WHO guidelines are almost the same. \\

 In conclusion we suggest that  the present research work can  be extrapolated to  real-administration scenario based on the WHO 2018 guidelines for Multi Drug therapy (MDT) consisting of drugs rifampin, dapsone and clofazimine with certain dosage   administered every 30 days over a period of 12 months for the treatment of lepra type I and type II reactions. The duration may vary based on whether it's paucibacillary leprosy (6 months) or multibacillary leprosy (12 months).   \\

 Also this study can be of important help to the clinician in early detection of the leprosy and avoid and control the disease from going to Lepra reactions and help in averting major damages.


\section*{Funding}
This research was supported by Council of Scientific and Industrial Research (CSIR) under project grant -{\bf{ Role and Interactions of Biological Markers in Causation of Type1/Type 2 Lepra Reactions: A In Vivo Mathematical Modelling with Clinical Validation (Sanction Letter No. 25(0317)/20/EMR-II).}} 

\section*{Data Availability Statement (DAS)}

We do not analyse or generate any datasets, because our work proceeds within a theoretical and mathematical approach.

\section*{Declarations}
The authors declare no Conflict of Interest for this research work.

\section*{Ethics Statement} This research did not require any ethical approval.

\section*{Acknowledgments}
The authors dedicate this paper to the founder chancellor of SSSIHL, Bhagawan Sri Sathya Sai Baba. The corresponding author also dedicates this paper to his loving elder brother D. A. C. Prakash who still lives in his heart. 

\printbibliography

\end{document}